\documentclass{amsart}

\usepackage{amssymb}
\usepackage{amsmath}
\usepackage{amssymb}
\usepackage{latexsym}
\usepackage{euscript}
\usepackage{enumerate}
\usepackage{mathdots}

\usepackage{xcolor}

%          environment name
{\left[\begin{smallmatrix}}%            begin code
{\end{smallmatrix}\right]}%

\newcommand{\setdef}[2]{\left\{ #1 :\, \vphantom{#1} #2 \right\}}

   \def\sH{{\mathfrak H}}   
   \def\sK{{\mathfrak K}}   
\def\sM{{\mathfrak M}}      
      \def\sR{{\mathfrak R}}
      
\def\sV{{\mathfrak V}}   \def\sW{{\mathfrak W}}   \def\sX{{\mathfrak X}}
   \def\sZ{{\mathfrak Z}}

      \def\dC{{\mathbb C}}

   \def\dN{{\mathbb N}}

  %%%

\def\bm\chi{\mbox{\boldmath$\chi$}}

\def\la{\lambda}

\def\si{\sigma}

\let\xker=\ker \def\ker{{\xker\,}}

\DeclareMathOperator{\ran}{ran}
\DeclareMathOperator{\dom}{dom}
\DeclareMathOperator{\mul}{mul}

\DeclareMathOperator{\spn}{span}

\def\hplus {{\, \widehat + \,}}

\newtheorem{theorem}{Theorem}[section]
\newtheorem{proposition}[theorem]{Proposition}
\newtheorem{corollary}[theorem]{Corollary}
\newtheorem{lemma}[theorem]{Lemma}
\newtheorem{definition}[theorem]{Definition}
\theoremstyle{definition}

\newtheorem{remark}[theorem]{Remark}

\numberwithin{equation}{section}

\makeatletter
\newcommand{\superimpose}[2]{%
  {\ooalign{$#1\@firstoftwo#2$\cr\hfil$#1\@secondoftwo#2$\hfil\cr}}}
\makeatother

\sloppy

\begin{document}

\title[Jordan-like decomposition for relations]
{A Jordan-like decomposition for linear relations in finite-dimensional spaces}

\author{Thomas Berger}
\address{Universit\"at  Paderborn\\
Institut f\"{u}r Mathematik\\
Warburger Str.\ 100\\
33098 Paderborn\\ Germany}
\email{thomas.berger{\scriptsize @}math.upb.de}
\author{Henk de Snoo}
\address{Bernoulli Institute for Mathematics, Computer Science
and Artificial Intelligence\\
University of Groningen \\
P.O.\ Box 407, 9700 AK Groningen \\
Netherlands}
\email{h.s.v.de.snoo{\scriptsize @}rug.nl}
\author{Carsten Trunk}
\address{Institut f\"{u}r Mathematik \\
Technische Universit\"{a}t Ilmenau\\
Weimarer Stra{\ss}e~25, 98693~Ilmenau \\
Germany}
\email{carsten.trunk{\scriptsize @}tu-ilmenau.de}
\author{Henrik Winkler}
\address{Institut f\"{u}r Mathematik \\
Technische Universit\"{a}t Ilmenau\\
Weimarer Stra{\ss}e~25, 98693~Ilmenau \\
Germany}
\email{henrik.winkler{\scriptsize @}tu-ilmenau.de}

\keywords{ Linear relation, reducing sum decomposition, singular chain, Jordan chain, shift chain.}
\subjclass[2010]{Primary 15A21, 47A06; Secondary 15A04, 47A10}

%
%
%
%\textbf{Proposal for a change of notation by Henk and Thomas:}
%In Section~5 the quotient spaces
%are already denoted by $\sM_{k}(A)$; why not denote the Weyr sequence by $M_{k}$;
%their ``length'' is already denoted by $m$.
%In Section 3 we can do something similar: use $\sS_{k}(A)$ instead of $\sK_{k}(A)$ for
%the quotient spaces, denote the Weyr sequence by $S_{k}$ instead of $B_{k}$ and use $s$
%instead of $d$ for their ``length``.
%Of course the case of Lemma 4.1 is a bit different. Why not change
%$\sV_{k}(A)$ by $\sE_{k}$ and $d_{k}$ by $E_{k}$?
%And the length $v$ by $e$. When translating or shifting we stress
%$e=e(A)$.
%For the $\lambda$ and $\infty$ cases we could use $\sE(A,\lambda)$
%or $\sE(A, \infty)$ for the quotient spaces,
%$\sE_{k}(\lambda)$ or $\sE_{k}(\infty)$
%for the Weyr sequences and $e(\lambda)$ or $e(\infty)$ for their lengths.
%Of course before doing this we should think of all ramifications.\\
%
%{\CR Carsten: Sorry, guys, but I strongly disagree. Your proposed
%names are totally arbitrary. They have some sense only in the
%context of our proof which is a rather weak argument. Moreover,
%why should we denote the WEYR index by S? S is resevered for SEGRE.
%
%W makes perfect sense, see our own Remark 6.2 (d). Moreover, the names
%for the other numbers fit to the Kronecker indices (actually after
%conjugation). You did not noice it, so maybe one should it make
%more visible (or we do it in the next paper).}\\
%
%
%\newpage

\begin{abstract}
A square matrix $A$ has the usual Jordan canonical form that
describes the structure of $A$ via eigenvalues and the
corresponding Jordan blocks.
If $A$ is a linear relation in a finite-dimensional linear space $\sH$
(i.e., $A$  is a linear subspace of $\sH \times \sH$ and can be considered
as a multivalued linear operator), then there is a richer structure.
In addition to the classical Jordan chains (interpreted
in the Cartesian product $\sH \times \sH$),
there occur three more classes of chains: chains
starting at zero (the chains for the eigenvalue infinity),
chains starting at zero and also ending at zero
(the singular chains), and chains %neither starting nor ending at zero
with linearly independent entries
(the shift chains). These four types of chains give rise to a direct sum decomposition (a Jordan-like decomposition)
of the linear relation $A$.
In this decomposition there is a completely singular part that has
the extended complex plane as eigenvalues; a usual Jordan part that corresponds to the finite
proper eigenvalues; a Jordan part that corresponds to the eigenvalue $\infty$;
and a multishift, i.e., a part that has no eigenvalues at all. Furthermore, the Jordan-like decomposition exhibits a certain uniqueness, closing a gap in earlier results.
The presentation is purely algebraic, only the structure of linear spaces is used. Moreover, the presentation has a uniform character:
each of the above types is constructed via an appropriately chosen
sequence of quotient spaces. The dimensions of the spaces are the Weyr characteristics, which uniquely determine the Jordan-like decomposition of the linear relation.
\end{abstract}

\maketitle

\section{Introduction}\label{Sec:intro}

{ Let $\sH$ be a finite-dimensional linear space over $\dC$ and let $A$ be
a linear operator in $\sH$ with $\dom A=\sH$,
i.e., $A$ is defined everywhere and admits a representation as a matrix.
Then there is at least one eigenvalue  $\lambda \in \dC$
and to each eigenvalue belong chains of linearly independent vectors
$x_1,\ldots,x_n$, the so-called Jordan chains
\begin{equation}\label{pril}
(A-\lambda)x_n=x_{n-1}, \,(A-\lambda)x_{n-1}=x_{n-2}, \,\dots \,,\,(A-\lambda)x_1=0.
\end{equation}
The  Jordan canonical form of a matrix
offers a decomposition in terms of these Jordan chains.
For each   $\lambda \in \dC$ define
the sequence of quotient spaces
\begin{equation}\label{UnBeso-}
 \ker (A-\lambda), \,  \frac{\ker  (A-\lambda)^2}{\ker (A-\lambda)}, \,
 \frac{\ker  (A-\lambda)^3}{\ker (A-\lambda)^2}, \, \cdots
\end{equation}
and the corresponding Weyr characteristic by the sequence
\begin{equation}\label{UnBeso}
\dim \ker (A-\lambda),\,  \dim\frac{\ker  (A-\lambda)^2}{\ker (A-\lambda)},
\, \dim\frac{\ker  (A-\lambda)^3}{\ker (A-\lambda)^2}, \, \cdots .
\end{equation}
Then the Jordan canonical form is the unique representative of the equivalence class of~$A$ with respect to similarity, and it is uniquely determined by the Weyr characteristic. Furthermore, two matrices are similar if and only if their Weyr characteristics coincide.

For a recent treatment of the Weyr characteristic of matrices
and a historical discussion see~\cite{S99} (and also \cite{S15}).
The above condition that $\dom A=\sH$ ensures the existence
of at least one eigenvalue. If this condition is not satisfied
or if $A$ is a linear relation (multivalued operator),
then new phenomena may occur.
For instance, it may happen that $A$ has no eigenvalues at all and the
Jordan canonical form breaks down.

The purpose of the present note is to derive a general decomposition
for a linear relation $A$ in a finite-dimensional space $\sH$ over $\dC$, i.e., $A$ is a subspace of $\sH\times \sH$. Linear relations in linear spaces date back to \cite{Arens}, see also~\cite{BHS,BSTW,Cross,SandDeSn07}. Compared to linear operators, linear relations may have an eigenvalue $\infty$ with its own Jordan chains.
However, in the context of a linear relation $A$  there is also a new feature: it may happen
that the usual point spectrum $\sigma_{p}(A)$ is equal to the extended complex plane.
 For instance, this is the case when there exists a nontrivial element in
$\ker A \cap \mul A$.
By splitting off the so-called completely singular part~$A_S$ of $A$, there remains
the proper point spectrum $\sigma_{\pi}(A)$ of $A$, consisting of finitely many points in
$\dC \cup \{\infty\}$; see \cite{BSTW}. The main result is the following Jordan-like direct sum decomposition
of the linear relation $A$:
\begin{equation}\label{LanadelRey+}
A = A_S \oplus J_{\la_1}(A)\oplus \cdots\oplus J_{\la_l}(A)\oplus J_\infty(A) \oplus A_M,
\end{equation}
where $J_{\lambda}(A)$ stands for the Jordan part corresponding to
$\lambda \in \sigma_{\pi}(A) = \{\lambda_1,\ldots,\lambda_l\} \cup \{\infty\}$, and $A_{M}$ is a multishift, i.e., a linear operator
without eigenvalues; cf. Theorem~\ref{splitit} for the precise meaning of~\eqref{LanadelRey+}.
For each component in \eqref{LanadelRey+},
there is, parallel to the case of matrices in \eqref{UnBeso},
a suitably chosen sequence of quotient spaces with its own Weyr characteristic. The collection of the Weyr characteristics of each part defines
the \emph{Weyr characteristic} of the linear relation $A$;
cf. Definition~\ref{wweyr}.
It is a complete set of invariants which justifies to view \eqref{LanadelRey+}
as a Jordan-like decomposition:
{\it The decomposition  \eqref{LanadelRey+} is uniquely determined by the Weyr characteristic and it is the unique representative of the equivalence class with respect to strict similarity}; cf. Section~\ref{Sec:Decomp}.

In a sense, the present paper can be seen as a completion of the results
in~\cite{SandDeSn05} with a purely linear algebra approach;
see also \cite{BSTW}.
In fact, the decomposition derived in~\cite{SandDeSn05} exhibits a certain non-uniqueness; cf.\ Section~\ref{Sec:Decomp}. This is resolved by utilizing the concept of a reducing sum decomposition, which is intrinsically unique; cf.\ Section~\ref{Sec:preli}. To achieve such a decomposition,  it is
required to use a completely different construction of the subrelations.

The present paper is organized as follows: The necessary notions of root spaces, chains and reducing sum decompositions for linear relations are recalled in Section~\ref{Sec:preli}.
The construction of each part in the decomposition~\eqref{LanadelRey+} of the linear relation $A$ follows a uniform pattern: The discussion of the four kinds of sequences of quotient spaces, and the resulting chain structure is the content of
Sections~\ref{Sec:SingJordanPart},~\ref{sec3}, and~\ref{Sec:Multishift};
see \eqref{tunesandI}, \eqref{vvv00}, \eqref{vv0}, \eqref{grijp0}, and~\eqref{eq:Dk}.
In Section~\ref{Sec:Decomp} the main decomposition results of the paper are collected
and explained in terms of the Weyr characteristic. Section \ref{Sec:Decomp} also contains a brief discussion of
related literature.

The above characterization of linear relations via
their Weyr characteristics is new and allows for a variety of applications,
in particular to linear matrix pencils. With any linear pencil one may associate a kernel and a range representation, which are two different linear relations.
The relationship between the reducing sum decomposition~\eqref{LanadelRey+} of these linear relations
and the Kronecker canonical form of the original matrix pencil is of great interest.
}

\section{Preliminaries}\label{Sec:preli}

\subsection{Linear relations}

A linear relation $A$ in a finite dimensional linear space $\sH$ is a subspace of $\sH\times \sH$. In the following a brief review of the usual notions in the context of linear relations is given:
 \[
\begin{split}
\dom A&= \{x \in \sH :\, \exists\, y \in \sH \mbox{ with } (x,y) \in A\},\,\, \rm{domain}, \\
\ker A&= \{ x \in \sH:\, (x,0) \in A\}, \,\,\rm{kernel},  \\
\ran A&= \{y \in \sH :\, \exists\, x \in \sH \mbox{ with } (x,y) \in A\}, \,\,\rm{range}, \\
\mul A &= \{ y \in \sH:\, (0,y) \in A\}, \,\,\rm{multivalued\,\, part}.
\end{split}
\]
Note that the inverse   of $A$ is a linear relation given by $A^{-1}=\{ (y,x) :\, (x,y) \in A\}$. Hence there are the formal
identities $\dom A^{-1}=\ran A$ and $\ker A= \mul A^{-1}$.
In addition, recall the following definitions of the product and sum of linear relations $A$ and $B$;
and, in particular, of $A-\lambda$ and $\lambda A$ when $\lambda \in \dC$:
 \[
\begin{split}
AB&=\{ (x,z) \in \sH \times \sH:\, \exists\, z \in \sH \mbox{ with } (x,z) \in B, (z,y) \in A\}, \,\, \rm{product},\\
\lambda A &= \{ (x,\lambda y) \in \sH\times \sH :\, (x,y) \in A \},   \\
A+B&= \{(x,y+z) \in \sH\times \sH :\, \exists\, (x,y) \in A \mbox{ with } (x,z) \in B\}, \,\, \rm{sum}, \\
A-\lambda&=A-\lambda I=\{(x,y-\lambda x) \in \sH\times \sH :\, (x,y) \in A\},
\end{split}
\]
where $I=\{(x,x) \in \sH \times \sH:\,  x \in \sH\}$ stands for the identity operator.

\subsection{Root spaces and Jordan chains}

The usual \emph{point spectrum} $\sigma_p(A)$
is the set of all eigenvalues $\la\in\dC\cup\{\infty\}$ of the relation $A$:
\begin{equation}\label{ook333-}
\begin{aligned}
 \sigma_p(A) =\big\{\ \lambda \in \dC \cup \{\infty\}\ &:\
 \ker (A-\lambda) \neq \{0\}, \mbox{ if } \,\lambda \in \dC,  \\
 &\quad\ \ \mbox{ or }  \mul A \neq \{0\}, \mbox{ if } \, \lambda=\infty \big\}
\end{aligned}
\end{equation}
 The  \emph{root spaces} $\mathfrak{R}_{\lambda}(A)$ of $A$ for
$\lambda \in \dC\cup \{\infty\}$  are linear subspaces of $\sH$ defined by
\begin{equation}\label{rootsp}
\begin{split}
 &\mathfrak{R}_{\lambda}(A)=
 \spn\{  \ker(A-\lambda)^{i} :\; \lambda \in \dC, \ i\in \mathbb N\}, \\
%\mbox{ and }
&\mathfrak{R}_{\infty}(A)=
\spn \{ \mul A^{i}:\; i\in \mathbb N\}.
\end{split}
\end{equation}
Note that $x \in \sR_\lambda(A)$, $\lambda \in \dC$, if and only if
 for some $n \in \dN$ there exists a chain of elements in $\sH \times \sH$ of the form
\begin{equation}\label{jchain}
 (x_n,x_{n-1}+\lambda x_n ),
 (x_{n-1},x_{n-2}+\lambda x_{n-1} ),
 \dots,
 (x_{2}, x_{1}+\lambda x_{2}),
 (x_{1},\lambda x_{1} ) \in A
\end{equation}
such that $x=x_n$, the ``endpoint'' of \eqref{jchain};
for all $1 \leq i \leq n$ one has $(x_i,0) \in (A-\lambda)^i$.
If $x_1 \neq 0$, then the chain in \eqref{jchain}
is said to be a \textit{Jordan chain} for $A$ corresponding to the eigenvalue $\la \in \dC$. %; cf. \eqref{pril}.
 Likewise, $y \in \sR_\infty(A)$ if and only if
 for some $m \in \dN$ there exists a chain of elements in $\sH \times \sH$ of the form
\begin{equation}\label{Jane}
   (0,y_1), \ (y_1,y_2), \ \dots \dots \dots , (y_{m-2}, y_{m-1}), \ (y_{m-1},y_m) \in A
 \end{equation}
such that $y=y_m$, the ``endpoint'' of \eqref{Jane}. If $y_1 \neq 0$, then the chain in \eqref{Jane}
is said to be a \textit{Jordan chain} for $A$ corresponding to the eigenvalue $\infty$;
for all $1 \leq i \leq m$ one has  $(0,y_i) \in A^i$.
 The \emph{total root space} $\sR_r(A)$ of $A$ is a linear subspace of $\sH$ defined by
\begin{equation}\label{totalrootsp}
 \sR_r(A)=\spn \setdef{\sR_\la(A)}{\lambda \in \dC\cup \{\infty\}};
\end{equation}
see \eqref{rootsp}.
Clearly,  an element belongs to $\sR_r(A)$ if and only if
it is the ``endpoint'' in the above sense
of a chain in \eqref{jchain} or of a chain in \eqref{Jane}.

\subsection{Singular chains}
%It may happen that all of $\dC \cup \{\infty\}$ belongs to the
%point spectrum of $A$. For instance, this is the case if there is a nontrivial element
%$x \in \ker A \cap \mul A$.
The \emph{singular chain subspace} $\mathfrak{R}_{c}(A)$
of $A$  is a linear subspace of the total root space $\sR_r(A)$ defined by
\begin{equation}\label{Alejandro}
 \mathfrak{R}_{c}(A)=\mathfrak{R}_{0}(A)\cap \mathfrak{R}_{\infty}(A);
\end{equation}
cf. \cite{SandDeSn05}.
Note that $u \in \sR_c(A)$ if and only if for some $k \in \dN$ there is a chain of elements
of the form
\begin{equation}\label{schain}
  (0,u_k), \ (u_{k} , u_{k-1}), \ \dots \dots \dots , (u_{2}, u_{1}), \ (u_{1},0) \in A
\end{equation}
such that $u=u_l$ for some $1 \leq l \leq k$.
The chain in \eqref{schain} is said to be a \textit{singular chain} for $A$.
It is clear from \eqref{schain} that  $\sR_c(A) \subset \dom A \cap \ran A$, and that
$\sR_c(A) \neq \{0\}$ implies that $\ker A \cap \sR_c(A)$ and  $\mul A \cap \sR_c(A)$
are non-trivial.
 The  singular chain space $\sR_c(A)$ can also be written as follows (for a proof, see \cite{BSTW}):
 for any $\la, \mu\in\dC\cup\{\infty\}$ with $\la \ne \mu$
one has
\begin{equation}\label{eq:Rc-lambda-mu}
\sR_{c}(A)=\sR_{\la}(A)\cap\sR_{\mu}(A)
\end{equation}
so that, in particular,
\begin{equation}\label{il}
\sR_c(A) \subset \sR_\lambda(A), \quad \lambda \in \dC\cup \{\infty\}.
\end{equation}

\subsection{Proper point spectrum}

In order to discuss a reducing sum decomposition (see  Definition~\ref{decomm} below) in
terms of $\sR_c(A)$ and $\sR_r(A)$,
 one needs to consider a certain restriction of the point spectrum of~$A$.
 It is clear that if $\sR_c(A) \neq \{0\}$, then
$\sR_\lambda(A) \neq \{0\}$ for all $\lambda \in \dC \cup \{\infty\}$,
so that $\sigma_p(A) =\dC\cup\{\infty\}$. In fact,  it is known that, due to finite-dimensionality,
\begin{equation*}\label{AmazonPrime}
\sigma_p(A) =\dC\cup\{\infty\}\quad
\mbox{if and only if} \quad \sR_{c}(A)\neq\{0\},
\end{equation*}
see~\cite[Prop.~3.2, Thm.~4.4]{SandDeSn05}.
 The \textit{proper point spectrum}, see \cite{BSTW},
is a subset of the point spectrum $\sigma_{p}(A)$ and defined by
\begin{equation}\label{ook333}
  \sigma_{\pi}(A) = \setdef{\lambda\in\sigma_p(A)}{\sR_{\lambda}(A) \setminus \sR_c(A)  \neq\emptyset},
\end{equation}
 cf.\ \eqref{il}. The elements in $\sigma_\pi(A)$ are called the
\textit{proper eigenvalues} of $A$.
As a consequence of \eqref{ook333}, observe that
\begin{equation}\label{totalrootsp+}
 \sR_r(A)=\spn \setdef{\sR_\la(A)}{\lambda \in \sigma_\pi(A)}\cup\sR_{c}(A).
\end{equation}
Note that if $\sR_c(A)=\{0\}$, then $\sigma_{\pi}(A)=\sigma_{p}(A)$.
Entries of chains belonging to different proper eigenvalues in $\sigma_{\pi}(A)$
are linearly independent and hence
\begin{equation*}
    |\sigma_{\pi}(A) |\le \dim \sH,
\end{equation*}
see \cite{BSTW},
so that $\sigma_{\pi}(A)$ is a finite set,
since $\sH$ is assumed to be finite-dimensional.
The proper point spectrum $\sigma_\pi(A)$ of a linear relation $A$ will be the substitute
for the usual point spectrum $\sigma_p(A)$ in the operator case.

\subsection{Shift chains}

To complete the description of the structure of a linear relation $A$,
one needs to go beyond the total root space $\sR_r(A)$.
A collection of linearly independent elements $x_1,\ldots x_n$ in $\sH$
is called a \textit{shift chain}  if
\begin{equation}\label{multistage}
  (x_{1},x_{2}), \dots ,(x_{n-1},x_{n})\in A.
\end{equation}
Shift chains, in a sense, extend the notions of singular and Jordan chains:
\begin{itemize}
  \item if additionally $(x_n,0)\in A$, then~\eqref{multistage} is a Jordan chain at $0$,
  \item if additionally $(0,x_1)\in A$, then~\eqref{multistage} is a Jordan chain at $\infty$,
  \item if additionally $(0,x_1),(x_n,0)\in A$, then~\eqref{multistage} is a singular chain.
\end{itemize}
On the other hand, they exhibit completely different spectral properties: the linear relation spanned by the
elements in~\eqref{multistage} is an operator without point spectrum in $\dC$.
 A linear relation $A$ in a finite-dimensional linear space $\sH$
is said to be a \textit{multishift}
if $A$ has no eigenvalues in $\dC\cup\{\infty\}$ (i.e., if $A$ is a linear operator
without eigenvalues in $\dC$).
It will be shown that there exists a linear subspace $\sR_m(A) \subset \sH$,
spanned by entries of shift chains,
such that it complements the subspace $\sR_r(A)$, and
the graph restriction of $A$ to $\sR_{m}(A)$ is given by
\begin{equation}\label{aamm}
A_M=A\cap(\sR_m(A) \times \sR_m(A));
\end{equation}
cf. Theorem \ref{Thm:DecompAR-AM}. The relation $A_M$ in \eqref{aamm}
is a multishift.

\subsection{Reducing sum decompositions}

Here is a brief review of reducing sum decompositions for linear relations in a
linear space $\sH$. Recall that subspaces $\sH_j\subset \sH$ for $j=1,\ldots,n$
of~$\sH$ are said to form a \textit{direct sum}, denoted by
\begin{equation}
\label{red1-}
\sH_1 \oplus \sH_2 \oplus \cdots \oplus \sH_n
\end{equation}
if $0=x_1+x_2+\ldots +x_n$ with elements $x_j\in \sH_j$
implies that $x_j=0$ for $j=1,\ldots,n$. In particular, each $x\in \sH_1 + \sH_2 +\cdots + \sH_n$ admits
a sum $x=x_1+x_2+\ldots +x_n$ with unique elements $x_j\in \sH_j$ for $j=1,\ldots,n$.

A linear relation $A$ in a linear space $\sH$ is a linear
subspace of $\sH \times \sH$.
The componentwise sum $A_1 \hplus A_2$ of
linear relations $A_1$ and $A_2$ in a linear space  $\sH$ is defined as the sum of the subspaces in
$\sH \times \sH$:
\[
 A_1 \hplus A_2= \{ (x+u,y+v) \in \sH\times \sH:\, (x,y) \in A_1, \, (u,v)\in A_2\}.
\]
If a sum $A_1 \hplus A_2\hplus\cdots\hplus A_n$ of linear relations $A_j$ in $\sH$ is direct, it is
denoted by
\[
A_1 \oplus A_2 \oplus \cdots \oplus A_n.
\]
For any linear subspace $\sX \subset \sH$
one defines the \textit{graph restriction}
of $A$ to $\sX$ as $A'=A \cap (\sX \times \sX)$,
so that $A'$ is a linear relation in $\sX$.

Define the linear space $\sH(A)$ by $\sH(A)=\dom A+\ran A$.  Then clearly,
\begin{equation}\label{domran}
A \subset \sH(A) \times \sH(A) \subset \sH \times \sH.
\end{equation}
Hence $A$ coincides with its graph restriction to $\sH(A)$.
Moreover, one sees that $\sH(A)$ is the smallest subspace
$\sX \subset \sH$ with the
property that $A \subset \sX \times \sX \subset \sH \times \sH$.

\medskip

\begin{definition}\label{decomm}
Let $A$ be a linear relation in a linear space $\sH$. If
\begin{equation}\label{red1}
 A=A_1 \oplus A_2 \oplus \cdots \oplus A_n
\end{equation}
is  a direct sum of its graph restrictions  $A_j$ to subspaces $\sH_j\subset\sH$ for $j=1,\ldots,n$,
which form a direct sum $\sH_1 \oplus \sH_2 \oplus \cdots \oplus \sH_n$,
then the decomposition \eqref{red1} is called a reducing sum decomposition of~$A$ with
respect to $(\sH_1,\ldots,\sH_n)$.
\end{definition}
Instead of calling \eqref{red1} a reducing sum decomposition of~$A$ with
respect to $(\sH_1,\ldots,\sH_n)$,
it is frequently called a reducing sum decomposition of~$A$ with
respect to~\eqref{red1-}.

\begin{lemma}\label{redul}
Let $A$ be a linear relation in $\sH$ with a reducing sum decomposition~\eqref{red1} with respect to $(\sH_1,\ldots,\sH_n)$.
If $\sH=\dom A +\ran A$, then
\begin{equation}\label{red1++}
\sH_j = \dom A_j  + \ran A_j,
\quad j=1,\ldots, n.
\end{equation}
\end{lemma}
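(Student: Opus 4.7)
The plan is to prove the inclusion $\sH_j\subset \dom A_j+\ran A_j$, since the reverse inclusion is immediate from $A_j=A\cap(\sH_j\times\sH_j)$, which forces both $\dom A_j\subset\sH_j$ and $\ran A_j\subset\sH_j$.

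First I would unpack the decomposition of domains and ranges induced by the direct sum $A=A_1\oplus\cdots\oplus A_n$. Given any $(x,y)\in A$, there exist unique pairs $(x_k,y_k)\in A_k$ with $x=\sum_k x_k$ and $y=\sum_k y_k$; thus $x\in \dom A_1+\cdots+\dom A_n$ and $y\in \ran A_1+\cdots+\ran A_n$. The reverse containments follow from $A_k\subset A$. Hence
\begin{equation*}
\dom A=\dom A_1+\cdots+\dom A_n,\qquad \ran A=\ran A_1+\cdots+\ran A_n,
\end{equation*}
and, in particular,
\begin{equation*}
\sH=\dom A+\ran A=\sum_{k=1}^n\bigl(\dom A_k+\ran A_k\bigr)\subset\sum_{k=1}^n\sH_k.
\end{equation*}

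Now fix $j$ and pick $h\in\sH_j$. By the previous display I can write $h=\sum_{k=1}^n h_k$ with $h_k\in \dom A_k+\ran A_k\subset\sH_k$. Since $h\in\sH_j$ as well, subtracting yields
\begin{equation*}
0=(h_j-h)+\sum_{k\neq j}h_k,
\end{equation*}
a sum of elements taken from $\sH_j,\sH_1,\ldots,\sH_{j-1},\sH_{j+1},\ldots,\sH_n$ respectively. The hypothesis that $\sH_1\oplus\cdots\oplus\sH_n$ is a direct sum then forces $h_k=0$ for $k\neq j$ and $h=h_j\in\dom A_j+\ran A_j$. This gives $\sH_j\subset\dom A_j+\ran A_j$, completing the proof.

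There is no real obstacle here: the argument is essentially a bookkeeping exercise that exploits both direct-sum structures simultaneously, the one for $A$ to split off each $\dom A_k$ and $\ran A_k$ inside $\sH_k$, and the one for $\sH$ to isolate the $j$-th summand. The only subtlety worth flagging is to keep the two kinds of directness conceptually separate: the directness of $A_1\oplus\cdots\oplus A_n$ is used to obtain the decompositions of $\dom A$ and $\ran A$, while the directness of $\sH_1\oplus\cdots\oplus\sH_n$ is used at the end to conclude $h\in\sH_j$.
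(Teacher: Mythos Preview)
Your proof is correct and follows essentially the same approach as the paper: decompose $\dom A$ and $\ran A$ along the direct sum $A=A_1\oplus\cdots\oplus A_n$, obtain $\sH=\bigoplus_k(\dom A_k+\ran A_k)\subset\bigoplus_k\sH_k$, and then use directness of the $\sH_k$ to force equality in each summand. The paper compresses your final element-chasing step into a one-line dimension/inclusion argument, but the substance is identical.
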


\begin{proof}
It follows from  \eqref{red1-}
and  \eqref{red1} that
\[
 \dom A=\dom A_1  \oplus \dots \oplus \dom A_n,
 \quad  \ran A=\ran A_1  \oplus \dots \oplus \ran A_n,
\]
so that since $\dom A_j+\ran A_j \subset \sH_j$ for  $j=1,\ldots, n$, one has
\[
  \dom A +\ran A =(\dom A_1+ \ran A_1)  \oplus \dots \oplus (\dom A_n +\ran A_n)
  \subset \sH_1 \oplus \dots \oplus \sH_n \subset \sH.
\]
Therefore,  the identity $\sH=\dom A +\ran A$ implies that  \eqref{red1++} holds.
\end{proof}

Note that Lemma~\ref{redul} implies that for any reducing sum decomposition~\eqref{red1} with respect to $(\sH_1,\ldots,\sH_n)$ one has that, since $A\subset \sH(A)\times\sH(A)$, $\sH_1 \oplus \dots \oplus \sH_n=\sH(A)$.

\medskip

Finally, it is emphasized that any reducing sum decomposition is intrinsically unique when the decomposition
 $\sH(A)=\sH_1 \oplus \dots \oplus \sH_n$ is fixed, as the linear relations $A_j$, $1\le j\le n$, are defined as the graph restrictions of $A$ to $\sH_j$.

\section{The completely singular part of a linear relation}
\label{Sec:SingJordanPart}

 Let $A$ be a linear relation in a finite-dimensional linear space $\sH$. The \textit{completely singular part} $A_S$ of $A$ is a linear relation defined
as the graph restriction of $A$  to the singular chain subspace $\sR_{c}(A)$:
\begin{equation}\label{srel}
 A_S =A \cap (\sR_c(A)\times \sR_c(A)).
\end{equation}
A linear relation $A$ in $\sH$ is called \emph{completely singular} if $A=A_S$. In this section it will be shown that $A_S$ is spanned by singular chains as in \eqref{schain}. The existence of such a basis was established in~\cite[Thm.~7.2]{SandDeSn05}.
The new proof in Theorem~\ref{Blaubeere} below is more in line with similar constructions in later sections
and it reveals some additional properties of the basis elements and the connection to the Weyr characteristic.

\medskip

{ The construction of the basis of singular chains is based on an appropriate  choice
of quotient spaces involving $\sR_c(A)$.
First, recall that $\ker A^{k}\subset \ker A^{k+1}$ for all $k \geq 1$.
The sequence of quotient spaces $\sK_k(A)$ is
defined by
\begin{equation}\label{tunesandI}
\sK_1(A)= \ker A \cap \sR_c(A), \quad
\sK_k(A):=\frac{\ker A^{k}\cap \sR_c(A)}{\ker A^{k-1}\cap\sR_c(A)},\ \ k\geq 2.
\end{equation}
Indeed, since the denominator is included in the numerator,
each quotient space $\sK_k(A)$, $k \geq 2$, is well defined.
The \textit{Weyr characteristic} of $A$ with respect to the
sequence of quotient spaces in~\eqref{tunesandI}
%$\sR_{c}(A)$
is defined as the sequence $(B_{k})_{k \geq 1}$ with
%Let the dimension of $\sK_k(A)$ be denoted by
\begin{equation}\label{tunesandI+}
 B_k := \dim \sK_k(A), \quad k \geq 1.
\end{equation}
{ Observe that  if %$\ker A \cap \sR_{c}(A)=\{0\}$, then
$\sR_{c}(A)=\{0\}$, then
$B_{k}=0$ for all $k \geq 1$. In this case $A_S$ is trivial.

\medskip

Now the case $\sR_{c}(A)\neq \{0\}$ will be considered.
Then the sequence in~\eqref{tunesandI+} is not
trivial, although ultimately the entries are zero.  To see this, observe that since the linear space $\sH$ is finite-dimensional the
number
\begin{equation}\label{d}
d=\min\setdef{k\in\dN}{\ker A^{k+1}\cap \sR_c(A)=\ker A^{k}\cap \sR_c(A)}
\end{equation}
is well defined.}

\begin{lemma}\label{Tja}
Let $A$ be a linear relation in a finite-dimensional space $\sH$ and
assume $\sR_{c}(A) \neq \{0\}$.  Let $d \geq 1$ be given by \eqref{d}, then for $k>d$
one has
$$
\ker A^{k}\cap \sR_c(A)= \ker A^{d}\cap \sR_c(A)\quad \mbox{and, hence,}\quad
B_k=0.
$$
Moreover, $B_1 \ge 1$.
\end{lemma}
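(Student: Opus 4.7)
The lemma splits into two claims: the lower bound $B_{1}\ge 1$ and the eventual stabilization $\ker A^{k}\cap\sR_{c}(A)=\ker A^{d}\cap\sR_{c}(A)$ for $k>d$. The first is immediate: the observation made in the paragraph following \eqref{schain} records that whenever $\sR_{c}(A)\neq\{0\}$, both $\ker A\cap\sR_{c}(A)$ and $\mul A\cap\sR_{c}(A)$ are non-trivial, and hence $B_{1}=\dim(\ker A\cap\sR_{c}(A))\ge 1$.

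For the stabilization, set $V_{k}:=\ker A^{k}\cap\sR_{c}(A)$. The identity $V_{d+1}=V_{d}$ from the definition of $d$ will serve as the base of an induction establishing $V_{k}=V_{d}$ for all $k\ge d$. The inductive step goes as follows: assume $V_{k}=V_{d}$ for some $k\ge d+1$ and take $x\in V_{k+1}$. Unpacking $x\in\ker A^{k+1}$ yields a chain $(x,z_{1}),(z_{1},z_{2}),\ldots,(z_{k},0)\in A$, so that $z_{1}\in\ker A^{k}$. The key step, to be carried out in the next paragraph, will be to show that $z_{1}\in\sR_{c}(A)$. Granted this, the inductive hypothesis gives $z_{1}\in V_{k}=V_{d}$, so $(z_{1},0)\in A^{d}$; combining with $(x,z_{1})\in A$ yields $(x,0)\in A^{d+1}$, hence $x\in V_{d+1}=V_{d}$, closing the induction. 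The claim $B_{k}=\dim(V_{k}/V_{k-1})=0$ for $k>d$ is then immediate.

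The main obstacle is therefore establishing $z_{1}\in\sR_{c}(A)$; for a generic chain witnessing $x\in\ker A^{k}$ there is no a priori reason for the intermediate entries to lie in any prescribed subspace. The plan is to exploit the inclusion $\sR_{c}(A)\subset\sR_{\infty}(A)$ provided by \eqref{il}: by \eqref{Jane}, $x$ is the right endpoint of some chain $(0,v_{1}),(v_{1},v_{2}),\ldots,(v_{n-1},x)\in A$. Concatenating this with the chain $(x,z_{1}),(z_{1},z_{2}),\ldots,(z_{k},0)\in A$ produces a single chain running from $0$ to $0$, which is a singular chain for $A$ in the sense of \eqref{schain}. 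Every entry of a singular chain lies in $\sR_{c}(A)$, so in particular $z_{1}\in\sR_{c}(A)$, as required.
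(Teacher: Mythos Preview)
The proposal is correct and follows essentially the same route as the paper. Both arguments proceed by induction on the stabilization of $\ker A^{k}\cap\sR_{c}(A)$, and both rely on the fact that the successor $z_{1}$ of $x$ in a kernel chain again lies in $\sR_{c}(A)$; the paper states this fact (``As $x\in\sR_{c}(A)$, also $x_{j}\in\sR_{c}(A)$'') without further comment, whereas you spell out the underlying reason by concatenating the kernel chain with a chain witnessing $x\in\sR_{\infty}(A)$ to obtain a singular chain. Your induction is phrased as $V_{k}=V_{d}$ for all $k\ge d$ rather than the paper's ``$V_{k}=V_{k+1}$ implies $V_{k+1}=V_{k+2}$'', but this is a cosmetic difference.
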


\begin{proof}
Due to \eqref{d} it suffices to show that
$\ker A^{k}\cap \sR_c(A)=\ker A^{k+1}\cap \sR_c(A)$  for some $k \in \dN$
implies that
$\ker A^{k+1}\cap \sR_c(A)=\ker A^{k+2}\cap \sR_c(A)$.
Therefore, let $x\in \ker A^{k+2}\cap \sR_c(A)$.
Then there exist $x_1,\ldots, x_{k+1}$ such that
\begin{equation} \label{Desafortunadamente}
(x,x_{k+1}), (x_{k+1},x_{k})(x_k,x_{k-1}), \ldots, (x_1,0)\in A.
\end{equation}
As $x\in \sR_c(A)$, also $x_j\in \sR_c(A)$ for $1\leq k\leq k+1$. Moreover,
$x_{k+1}\in \ker A^{k+1}$ and, by assumption, $x_{k+1}\in \ker A^{k}\cap \sR_c(A)$.
Thus there exist $x'_1,\ldots, x'_{k-1}$ such that
$$
(x_{k+1},x'_{k-1}), (x'_{k-1},x'_{k-2}), \ldots, (x'_1,0)\in A.
$$
In combination with \eqref{Desafortunadamente} one obtains
$$
(x,x_{k+1}), (x_{k+1},x'_{k-1}), (x'_{k-1},x'_{k-2}), \ldots, (x'_1,0)\in A
$$
and $x\in \ker A^{k+1}\cap \sR_c(A)$ follows. This shows
\[
\ker A^{k+2}\cap \sR_c(A)\subset \ker A^{k+1}\cap \sR_c(A).
\]
The opposite inclusion follows from the fact that $\ker A^{k+1} \subset \ker A^{k+2}$.

To see that $B_1\ge 1$, observe that $\sR_c(A)\neq \{0\}$ implies $\ker A \cap \sR_c(A) \neq \{0\}$.
\end{proof}

\begin{theorem}\label{Blaubeere}
{ Let $A$ be a linear relation in a finite-dimensional space $\sH$ and
assume $\sR_{c}(A) \neq \{0\}$.  Let $d \geq 1$ be given by \eqref{d}, then
the Weyr characteristic $(B_k)_{k\geq 1}$ in \eqref{tunesandI+} satisfies
\begin{equation}\label{braun}
 B_{1} \geq B_{2} \geq \cdots \geq B_{d} \geq 1 \quad \mbox{and} \quad B_{k}=0,
 \quad k > d.
\end{equation}
Moreover,  there exist singular chains for $A$ of the following form}
\begin{equation} \label{Henk1}		
 \begin{array}{rl}
(0, x_{d}^i),\,
 (x_d^i, x_{d-1}^i),\, (x_{d-1}^i, x_{d-2}^i),\,  \dots  , (x_2^i, x_1^i),\ (x_1^i, 0), & \quad\quad \ 1 \leq i \leq B_d, \\
 (0,x_{d-1}^i),\; (x_{d-1}^i, x_{d-2}^i),\,  \dots ,  (x_2^i, x_1^i),\, (x_1^i, 0),         &B_d+1 \leq i \leq B_{d-1}, \\
 \ddots\qquad\qquad\vdots \quad\qquad \vdots\quad\ \ & \qquad\qquad\ \vdots \\
 (0,x_2^i),\, (x_2^i, x_1^i),\, (x_1^i, 0), & B_3+1 \leq i  \leq B_2, \\
 (0,x_1^i),\, (x_1^i, 0), & B_2+1 \leq i \leq B_1,
\end{array}
\end{equation}
where
$\{[x_{k}^{1}],\ldots,[x_{k}^{B_{k}}]\}$ is a basis  of $\sK_{k}(A)$, $1\le k\le d$,
and, consequently,
the elements in the set $\setdef{x_k^i}{1\le i\le B_k,\ 1\le k\le d}$ are linearly independent
in $\sH$. Furthermore, the completely singular part~$A_S$ defined in~\eqref{srel} admits the representation
\begin{equation}\label{TrapicheOakCasked}
\begin{split}
A_{S}&=\spn
\big\{(0, x_{k}^i),\,
 (x_k^i, x_{k-1}^i),\,  \dots  , (x_2^i, x_1^i),\ (x_1^i, 0):\, \\
&\hspace{5cm} B_{k+1}+1 \leq i \leq B_{k}, \ 1 \leq k \leq d \big\}.
\end{split}
\end{equation}
In particular, $\dom A_{S}=\ran A_{S} =\sR_{c}(A)$
and the total dimension of $A_{S}$ is
\begin{equation}\label{eq:dim-AS}
\dim A_{S} =2B_{1}+B_{2}+B_{3}+ \ldots+B_{d-1}+B_{d}.
\end{equation}
\end{theorem}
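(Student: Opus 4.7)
Let me write $V_k := \ker A^k \cap \sR_c(A)$ so that $\sK_k(A) = V_k/V_{k-1}$ with $V_0 = \{0\}$. My plan is to first establish the monotonicity of $(B_k)$ through a bijection between these quotients, and then to build the singular chains inductively from level $d$ downwards. For each $k \geq 1$ I define a descent map $\phi_k : \sK_{k+1}(A) \to V_k/(V_{k-1} + \mul A \cap V_k)$ by $\phi_k([u]) := [v]$, where $v \in V_k$ is any element with $(u,v) \in A$; such $v$ exists because $u \in \ker A^{k+1}$ furnishes a downward step, and the upward chain witnessing $u \in \sR_\infty(A)$ extends through $(u,v)$ to place $v$ in $\sR_c(A)$. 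Two choices of $v$ differ by $\mul A \cap V_k$, and two lifts of $[u]$ produce $v$'s differing by $V_{k-1}$, so $\phi_k$ is well-defined; injectivity follows because $\phi_k([u])=0$ lets one write $v = v' + m$ with $v' \in V_{k-1}$ and $m \in \mul A$, whence $(u,v') \in A$ forces $u \in V_k$. Surjectivity, the main technical hurdle and the place where the singular-chain structure of $\sR_c(A)$ is crucially exploited, proceeds by taking $v \in V_k \setminus \mul A$, invoking $v \in \sR_\infty(A)$ to produce an upward chain $(0, y_1), \ldots, (y_{p-1}, v) \in A$ with $p \geq 2$, and concatenating its last edge with the downward chain of $v$ of length $k$ to place $y_{p-1}$ in $V_{k+1}$. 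Bijectivity of $\phi_k$ then yields
\[
B_k - B_{k+1} \;=\; \dim(\mul A \cap V_k) - \dim(\mul A \cap V_{k-1}) \;\geq\; 0,
\]
which together with Lemma~\ref{Tja} gives \eqref{braun}.

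Next I build the chains top-down. At level $d$ the identity above reads $\dim(\mul A \cap V_d) - \dim(\mul A \cap V_{d-1}) = B_d$, so $\mul A \cap V_d$ surjects onto $\sK_d(A)$; pick $x_d^1, \ldots, x_d^{B_d} \in \mul A \cap V_d$ lifting a basis, giving $(0, x_d^i) \in A$, and extract a downward chain $(x_d^i, x_{d-1}^i), \ldots, (x_1^i, 0) \in A$ from $x_d^i \in \ker A^d$, with $x_j^i \in V_j$ at each intermediate level. At each lower level $k < d$ the inherited $x_k^1, \ldots, x_k^{B_{k+1}}$ have classes in $\sK_k(A)$ that are linearly independent and, by the bijectivity of $\phi_k$, their span $E_1$ intersects $E_2 := \operatorname{Im}(\mul A \cap V_k \to \sK_k(A))$ trivially with $\dim E_1 + \dim E_2 = B_k$, whence $E_1 \oplus E_2 = \sK_k(A)$. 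I extend the basis by selecting $x_k^i \in \mul A \cap V_k$ for $B_{k+1}+1 \leq i \leq B_k$ so that $\{[x_k^i]\}_{i > B_{k+1}}$ is a basis of $E_2$, and start fresh downward chains from these new tops. This produces \eqref{Henk1} together with the assertion that $[x_k^1], \ldots, [x_k^{B_k}]$ is a basis of $\sK_k(A)$ at each level.

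Finally, linear independence of $\{x_k^i\}_{k,i}$ in $\sH$ follows by a triangular argument on $V_0 \subset V_1 \subset \cdots \subset V_d$: the largest level $k$ carrying a non-trivial coefficient in a presumed linear dependence, reduced modulo $V_{k-1}$, contradicts the basis property of $\{[x_k^i]\}_i$ in $\sK_k(A)$. The chain pairs in \eqref{TrapicheOakCasked} lie in $A_S$ by construction, and their linear independence in $\sH \times \sH$ follows by separate inspection of first and second components. The equalities $\dom A_S = \ran A_S = \sR_c(A)$ are then read off from the chain pairs. Telescoping the identity from the first paragraph gives $\dim(\mul A \cap \sR_c(A)) = \dim(\mul A \cap V_d) = B_1$, so
\[
\dim A_S \;=\; \dim \dom A_S + \dim \mul A_S \;=\; \dim \sR_c(A) + B_1 \;=\; 2B_1 + B_2 + \cdots + B_d,
\]
which matches the chain-pair count $\sum_{k=1}^d (k+1)(B_k - B_{k+1})$ and simultaneously delivers \eqref{eq:dim-AS} and the equality in \eqref{TrapicheOakCasked}.
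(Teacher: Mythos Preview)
Your argument is correct and genuinely different from the paper's. Both proofs run a top--down induction over the filtration $V_k=\ker A^k\cap\sR_c(A)$, but they organize the descent differently. The paper introduces the relations $A_k\subset\sK_k(A)\times\sK_{k-1}(A)$, shows only that $\ker A_k=\{0\}$ (so $B_k\ge B_{k+1}$), and then manufactures the chain tops $(0,x_k^i)\in A$ by an explicit subtraction argument; the equality \eqref{TrapicheOakCasked} is obtained by a direct element-chasing reduction (its Step~5). You instead pass to the finer quotient $V_k/(V_{k-1}+\mul A\cap V_k)$ and prove that your descent map $\phi_k$ is a \emph{bijection}, which yields the exact identity $B_k-B_{k+1}=\dim(\mul A\cap V_k)-\dim(\mul A\cap V_{k-1})$. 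This identity does two jobs that the paper handles separately: it pinpoints the fresh tops at level $k$ as precisely (lifts of) a basis of $(\mul A\cap V_k)/(\mul A\cap V_{k-1})$, so $(0,x_k^i)\in A$ is automatic; and, telescoped, it gives $\dim(\mul A\cap\sR_c(A))=B_1$, which lets you conclude \eqref{TrapicheOakCasked} by a clean dimension count $\dim A_S=\dim\dom A_S+\dim\mul A_S$. Your route is more structural and avoids the ad hoc correction terms in the paper's Step~3 and the inductive chase in Step~5; the paper's route avoids the extra quotient and the surjectivity argument for $\phi_k$.
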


\begin{proof}
The main tool in the proof is the existence of linear relations
$A_{k}\subset \sK_{k}(A)\times \sK_{k-1}(A)$, $2 \leq k \leq d$, which are injective, i.e., $\ker A_k = \{0\}$.
From this, it follows that the sequence $(B_{k})_{k\ge 1}$ is nonincreasing.
The singular chains for $A$ will  be constructed via suitably chosen bases
in each of the quotient spaces $\sK_k(A)$, $1 \leq k \leq d$, beginning with $\sK_{d}(A)$
and working backwards to $\sK_{1}(A)$. This procedure is carried out
in a number of steps.

\medskip\noindent
\emph{Step 1}:
Let $1 \leq k \leq d$ and let $x \in [x]\in\sK_{k}(A)$.
Then there exists $y \in \sH$ such that
\[
 (x,y) \in A, \quad x\in\ker A^{k}\cap \sR_{c}(A), \quad y\in \ker A^{k-1} \cap \sR_{c}(A).
\]
For $k=1$ this means that $(x,0) \in A$.
To see the implication, note that by definition  $x\in\ker A^{k}\cap \sR_{c}(A)$.
Since $x\in\ker A^{k}$,  there is some $y\in\ker A^{k-1}$ such that $(x,y)\in A$ and $(y,0) \in A^{k-1}$.
Since $x \in \sR_c(A)$, it follows that $y \in \sR_c(A)$.
Therefore one concludes that $y\in \ker A^{k-1} \cap \sR_{c}(A)$.

\medskip\noindent
\emph{Step 2}:
Define the linear relation $A_{k}\subset \sK_{k}(A)\times \sK_{k-1}(A)$,  $2\le k \le d$, as follows:
\begin{equation}\label{ak}
A_k:= \setdef{([x],[y]) \in \sK_k(A)\times \sK_{k-1}(A)}{\exists\, (x',y')\in A \mbox{ with }
[x']=[x]\mbox{ and } [y']=[y]}.
\end{equation}
 By Step 1 it is clear that $A_k$ is defined on all of $\sK_{k}(A)$, $1\le k \le d$.

 Moreover, $A_k$ is injective for $k\ge 2$, that is, $\ker A_k =\{0\}$.
To see this, let $([x],[0])\in A_k$.
Then there exists $(x',y')\in A$ with
$[x]=[x'] \in\sK_{k}(A)$ and $[y']=[0]\in\sK_{k-1}(A)$.
Hence $x' \in \ker A^k\cap \sR_{c}(A)$ and $y' \in \ker A^{k-2}\cap \sR_{c}(A)$.
As $y' \in \sR_{c}(A)$ there exists   $z'\in \sR_{c}(A)$ with
$(z',y') \in A$.
Thus, $z'\in \ker A^{k-1}$ and $(x'-z',0)\in A$.
Since $\ker A\subset \ker A^{k-1}$, it follows that
 $$
x'=x'-z'+z' \in \ker A +\ker A^{k-1} \subset \ker A^{k-1}.
$$
This gives $[x]=[x']=0$ and shows that $A_{k}$ is injective. As a consequence,
the sequence $(B_{k})_{k\ge 1}$ is nonincreasing.
Recall that $B_k=0$  for $k>d$ by Lemma \ref{Tja}.

\medskip\noindent
\emph{Step 3}:
The construction of the singular chains for $A$ is associated with
the quotient spaces $\sK_d(A)$, \dots, $\sK_{1}(A)$, where $d \geq 1$.
Since $\dim \sK_d=B_d$, let
\begin{equation}\label{bd}
\{[v_d^1],\ldots,[v_d^{B_d}]\}
\end{equation}
be some basis for $\sK_d(A)$.

\medskip
First assume that $d=1$. In this case $(v_{1}^{i},0) \in A$, $1 \leq i \leq B_{1}$.
As $v_{1}^{i} \in \sR_{c}(A)$, there exist $z_{2}^{i} \in \sR_{c}(A)$ with
$(z_{2}^{i},v_{1}^{i}) \in A$, so that $z_{2}^{i}  \in \ker A^{2}\cap \sR_{c}(A)
=\ker A\cap \sR_{c}(A)$ by \eqref{d}. Hence $(z_{2}^{i},0) \in A$, and it follows that
$(0,v_{1}^{i}) = (z_{2}^{i},v_{1}^{i}) - (z_{2}^{i},0) \in A$. Thus with the choice $x_{1}^{i}=v_{1}^{i}$, $1 \leq i \leq B_{1}$,
the theorem has been proved when $d=1$.

\medskip
Next assume that $d \geq 2$.
 Then there are $d-1$ linear relations
 \[
A_d\subset \sK_{d}(A) \times \sK_{d-1}(A), \quad A_{d-1}
\subset \sK_{d-1}(A) \times \sK_{d-2}(A), \quad \dots,
\quad A_2\subset \sK_{2}(A) \times \sK_{1}(A),
\]
of the form \eqref{ak}.
With the choice \eqref{bd} it follows from Step 1
that there are elements $v_{d-1}^i$ such that for $1\le i \le B_d$:
\begin{equation*}
(v_d^i,v_{d-1}^i)\in A,
\quad  v_d^i\in\ker A^d \cap \sR_{c}(A),\quad
v_{d-1}^i\in \ker A^{d-1} \cap \sR_{c}(A).
\end{equation*}
As $v_d^i\in \sR_{c}(A)$, there exists $z_{d+1}^i\in \sR_{c}(A)$ such that
$(z_{d+1}^i,v_{d}^i)\in A$, and since $v_d^i\in \ker A^d$
this implies $z_{d+1}^i\in \ker A^{d+1}$.
Therefore, by \eqref{d}
one concludes that $z_{d+1}^i\in \ker A^d\cap \sR_{c}(A)$. Thus there exist numbers $\alpha_d^{i,j}$, $j=1,\ldots,B_d$, with
$$
z_{d+1}^i =y_{d-1}^{i}+\sum_{j=1}^{B_d}\alpha_d^{i,j} v_d^j,
$$
where $y_{d-1}^{i} \in \ker A^{d-1} \cap \sR_c(A)$.
Hence, one finds $u_{d-1}^{i}\in \ker A^{d-2}$ with
$$(y_{d-1}^{i},u_{d-1}^{i})\in A,$$
and
$$
\left(z_{d+1}^i, u_{d-1}^{i}+\sum_{j=1}^{B_d}\alpha_d^{i,j} v_{d-1}^j \right)
=\left(y_{d-1}^{i}
+\sum_{j=1}^{B_d}\alpha_d^{i,j} v_d^j , u_{d-1}^{i}
+\sum_{j=1}^{B_d}\alpha_d^{i,j} v_{d-1}^j \right)
\in A,
$$
which, via $(z_{d+1}^i,v_{d}^i)\in A$, implies
$$
\left(0, v_{d}^i- u_{d-1}^{i}- \sum_{j=1}^{B_d}\alpha_d^{i,j} v_{d-1}^j\right)\in A.
$$
This result suggests to define the elements $x_d^i$, $1 \leq i \leq B_{d}$, by
$$
x_{d}^i:= v_{d}^i- u_{d-1}^{i}-\sum_{j=1}^{B_d}\alpha_d^{i,j} v_{d-1}^j,
 \quad 1 \leq i \leq B_d.
$$
Clearly, $x_d^i \in \ker A^{d} \cap \sR_{c}(A)$ for $1 \leq i \leq B_{d}$,
and they provide a basis for the quotient space $\sK_d(A)$:
\begin{equation}\label{A}
\spn \{[x_d^1],\ldots,[x_d^{B_d}]\} = \sK_d(A) \quad \mbox{with}
\quad (0, x_{d}^i) \in A, \quad 1 \leq i \leq B_d.
\end{equation}
Again, by Step 1, with the elements $x_d^i$ from \eqref{A} there exist  elements $x_{d-1}^i$ such that
for $1\le i \le B_d$:
\begin{equation*}\label{gris}
 (x_d^i,x_{d-1}^i)\in A, \quad
x_d^i \in \ker A^d \cap \sR_c(A), \quad
  x_{d-1}^i\in \ker A^{d-1} \cap \sR_{c}(A).
 \end{equation*}
 Observe that by definition
\[
 ([x_d^i],[x_{d-1}^i])\in A_d,
\]
so that by Step~2 the elements $[x_{d-1}^i]$,
$1\le i \le B_d$, are linearly independent in $\sK_{d-1}(A)$.
Since $\dim \sK_{d-1}(A)=B_{d-1} \geq B_d$,
one can enlarge the family $\{[x_{d-1}^1],\ldots,[x_{d-1}^{B_d}]\}$
by choosing elements $[v_{d-1}^{B_d+1}],\ldots,[v_{d-1}^{B_{d-1}}]$ such that
$$
\spn\{[x_{d-1}^1],\ldots,[x_{d-1}^{B_d}],[v_{d-1}^{B_d+1}],\ldots,[v_{d-1}^{B_{d-1}}] \} = \sK_{d-1}(A).
$$
 By Step 1 there exist elements $v_{d-2}^i$ such that for $B_d+1\le i \le B_{d-1}$:
\begin{equation*}
(v_{d-1}^i,v_{d-2}^i)\in A, \quad
v_{d-1}^i\in\ker A^{d-1} \cap \sR_{c}(A),\quad
v_{d-2}^i\in \ker A^{d-2} \cap \sR_{c}(A).
\end{equation*}
% As $v_{d-1}^i\in \sR_{c}(A)$,
%there exists $z_{d}^i\in \sR_{c}(A)$ such that
%$(z_{d}^i,v_{d-1}^i)\in A$, which implies $z_{d}^i\in \ker A^{d}$.
%  There exists numbers $\beta_{d}^i$ with
%$$
%z_{d}^i =y_{d-1}^i+\sum_{i=1}^{B_d}\beta_{d}^i x_d^i,
% $$
%where $y_{d-1}^i \in \ker A^{d-1} \cap \sR_{c}(A)$. Hence there exists elements
%$y_{d-2}^i\in \ker A^{d-2}$ with $(y_{d-1}^i,y_{d-2}^i)\in A$ and
%$$
%\left(z_{d}^i, y_{d-2}^i+\sum_{i=1}^{B_d}\beta_d^i x_{d-1}^i \right)\in A,
%$$
%which implies
%$$
%\left(0, v_{d-1}^i- y_{d-2}^i-\sum_{i=1}^{B_d}\beta_d^i x_{d-1}^i \right)\in A.
%$$
%This result suggests to define the elements $x_{d-1}^i$,  $B_d+1\le i \le B_{d-1}$, by
%$$
%x_{d-1}^i:= v_{d-1}^i- y_{d-2}^i-\sum_{i=1}^{B_d}\beta_d^i x_{d-1}^i.
%$$
Similar to the procedure leading to~$x_d^i$ it is then possible to find elements $x_{d-1}^i \in \ker A^{d-1} \cap \sR_{c}(A)$,
$B_d+1\le i \le B_{d-1}$, and to obtain a basis for the quotient space $\sK_{d-1}(A)$:
\begin{equation}\label{B}
\{[x_{d-1}^1],\ldots,[x_{d-1}^{B_{d-1}}]\} \quad \mbox{with} \quad
\begin{array}{l}
(x_d^i,x_{d-1}^i)\in A \quad \mbox{for} \quad 1\le i \le B_d, \\
(0, x_{d-1}^i) \in A \quad \mbox{for}  \quad B_d+1\le i \le B_{d-1},
\end{array}
\end{equation}
so that the basis $\{[x_{d-1}^1],\ldots,[x_{d-1}^{B_{d-1}}]\}$
of $\sK_{d-1}(A)$ is in the desired form.

\medskip
Continuing in this way by induction, one finds
as successors to \eqref{B},
 that for $1 \leq j \leq d-1$ there exist elements
\[
x_{d-j}^i \in \ker A^{d-j}\cap \sR_{c}(A), \quad 1 \leq i \leq B_{d-j},
\]
which give a basis for the quotient space $\sK_{d-j}(A)$:
\begin{equation}\label{C}
\{[x_{d-j}^1],\ldots,[x_{d-j}^{B_{d-j}}]\} \quad \mbox{with} \quad
\begin{array}{l}
(x_{d-j+1}^i,x_{d-j}^i)\in A \quad \mbox{for} \quad 1\le i \le B_{d-j+1}, \\
(0, x_{d-j}^i) \in A \quad \mbox{for}  \quad B_{d-j+1}+1\le i \le B_{d-j},
\end{array}
\end{equation}
so that the basis $\{[x_{d-j}^1],\ldots,[x_{d-j}^{B_{d-j}}]\}$
of $\sK_{d-j}(A)$ is in the desired form.
%\medskip
%In the final induction step, with $j=d-1$ in \eqref{C}, one now finds elements
%$x_{1}^i \in \ker A\cap \sR_{c}(A)$, $1 \leq i \leq B_{1}$,
%and they give a basis for the quotient space $\sK_{1}(A)$:
%\begin{equation*}
%\{[x_{1}^1],\ldots,[x_{1}^{B_{1}}]\} \quad \mbox{with} \quad
%\begin{array}{l}
%(x_{2}^i,x_{1}^i)\in A \quad \mbox{for} \quad 1\le i \le B_{2}, \\
%(0, x_{1}^i) \in A \quad \mbox{for}  \quad B_{2}+1\le i \le B_{1}.
%\end{array}
%\end{equation*}
Note that for $j=d-1$ the construction implies that $(x_{1}^{i},0) \in A$, $1 \leq i \leq B_{1}$.
Hence the assertion concerning the existence of singular chains for the
completely singular part $A_{S}$ has been proved.

\medskip\noindent
\emph{Step 4}:
It follows from the construction in Step 3 that
$\{[x_{k}^{1}],\ldots,[x_{k}^{B_{k}}]\}$ is a basis  of $\sK_{k}(A)$, $1\le k\le d$.
 Then the representatives
\[
\setdef{x_k^i}{1\le i\le B_k,\ 1\le k\le d}
\]
are linearly independent in $\sH$. To see this, assume that for some $c_k^i\in\dC$
\[
\sum_{k=1}^{d}\sum_{i=1}^{B_k} c_k^i x_k^i=0.
\]
Forming equivalence classes in $\sK_d(A)$, it follows by definition
that $[x_k^i] = [0] \in\sK_d(A)$ for $i=1,\ldots, B_k$ and $k=1,\ldots, d-1$, and since $\{[x_d^1],\ldots,[x_d^{B_d}]\}$ is a basis of $\sK_d(A)$ the reduced equality
\[
\sum_{i=1}^{B_d} c_d^i [x_d^i]=0
\]
implies that $c_d^1 = \ldots c_d^{B_d} = 0$. Forming equivalence classes in $\sK_{d-1}$ and
proceeding in a similar way, it ultimately follows that
$c_k^i=0$ for all the coefficients, which proves the claim.

\medskip\noindent
\emph{Step 5}:  It will be shown that~\eqref{TrapicheOakCasked} holds.
In fact, by the construction in Step 3 it suffices to show that $A_{S}$ is contained in the right-hand side of
\eqref{TrapicheOakCasked}. To this end,
let $(x,y)\in A_S$, so that $(x,y) \in A$ and $x,y \in \sR_{c}(A)$.
If $y=0$, then
\[
    x \in \ker A \cap \sR_c(A) = \sK_1(A) = \spn\{x_1^1,\ldots,x_1^{B_1}\}
\]
and the assertion is shown. If $y\neq 0$, then $y\in \ker A^k$ for some $1\leq k\leq d$ and hence $x\in \ker A^{k+1}$.
Choose maximal $k\geq 1$ with the property
$[y]\in \sK_{k}(A)\setminus\{[0]\}$. If $k=d$, then
$$
y= y_{d-1}+\sum_{i=1}^{B_{d}}\gamma^i x_{d}^i,
$$
where $\gamma^i \in \dC$, $i=1,\ldots,B_d$, and  $y_{d-1} \in \ker A^{d-1}\cap \sR_{c}(A)$.
It follows that
$$
(x,y)=(x,y_{d-1})+\sum_{i=1}^{B_{d}}\gamma^i \left(0 , x_{d}^i\right),
$$
and it remains to show
that $(x,y_{d-1})$ is contained in the right-hand side of \eqref{TrapicheOakCasked}.
Note that $x\in \ker A^{d+1}\cap \sR_{c}(A)=\ker A^{d}\cap \sR_{c}(A)$ by~\eqref{d}.
To continue by an inductive argument, assume now that $k<d$.
Then one can write $y$ as
$$
y= y_{k-1}+\sum_{i=1}^{B_{k}}\gamma^i x_{k}^i,
$$
where $\gamma^i \in \dC$, $i=1,\ldots,B_k$, and  $y_{k-1} \in \ker A^{k-1}\cap \sR_{c}(A)$.
Hence, there exists $y_{k}\in \ker A^k\cap \sR_{c}(A)$ with $(y_k,y_{k-1}) \in A$ and it follows that
\[
\left( y_{k}+\sum_{i=1}^{B_{k+1}}\gamma^i x_{k+1}^i,\, y\right)
=( y_{k}, y_{k-1}) +\sum_{i=1}^{B_{k+1}}\gamma^i (x_{k+1}^i,  x_{k}^i)
+\sum_{i=B_{k+1}+1}^{B_{k}}\gamma^i (0,x_{k}^i)   \in A,
\]
where $(0, x_{k}^i)\in A$ for $B_{k+1}+1\leq i\leq B_{k}$ was used, see Step 3.
Since $[y_{k}] = [0] \in \sK_{k+1}(A)$, this gives
$$
\left( \left[\sum_{i=1}^{B_{k+1}}\gamma^i x_{k+1}^i\right],\, [y]\right) \in A_{k+1},
$$
whereas it is clear that  $([x],[y]) \in A_{k+1}$.
By Step 2, the linear relation $A_{k+1}$ is injective, so that
$$
[x]= \left[\sum_{i=1}^{B_{k+1}}\gamma^i x_{k+1}^i\right]
$$
in $\sK_{k+1}(A)$ and therefore $x= x_k+\sum_{i=1}^{B_{k+1}}\gamma^i x_{k+1}^i$ with some $x_k\in \ker A^k\cap \sR_c(A)$. Hence, one obtains
$$
(x,y) = (x_k,y_{k-1}) + \sum_{i=1}^{B_{k+1}}\gamma^i \left( x_{k+1}^i, x_{k}^i\right)  + \sum_{i=B_{k+1}+1}^{B_{k}}\gamma^i \left(0,
x_{k}^i\right).
$$
From this it follows that
 to show $(x,y) \in (\ker A^{k+1}\cap \sR_c(A)) \times (\ker A^k\cap \sR_c(A))$
 is a linear combination of elements from the
 right-hand side of ~\eqref{TrapicheOakCasked}, it is sufficient
to show the same for $(x_k,y_{k-1})\in (\ker A^{k}\cap \sR_c(A)) \times (\ker A^{k-1}\cap \sR_c(A))$.
By repeating the above procedure
one arrives at elements $(x_1,y_0)=(x_{1},0)$,
where $x_{1}\in \ker A \cap \sR_{c}(A)=  \sK_{1}(A)$.
Obviously, $x_{1}$ can be written as
a linear combination of the elements $x_i^1$, $i=1,\ldots,B_1$,
and therefore, $(x_{1},0)$ is a
linear combination of elements of the form
$(x_i^1,0)$. Thus $(x,y)$ belongs to the right-hand side of
\eqref{TrapicheOakCasked}.

\medskip\noindent
\emph{Step 6}: It remains to show~\eqref{eq:dim-AS}, which directly follows from
\eqref{Henk1}.
\end{proof}

\section{The root part of a linear relation}
\label{sec3}

Let $A$ be a linear relation in a finite-dimensional linear space $\sH$,
and let $A_S$ be its completely singular part~\eqref{srel}.
The linear relation $A_{\la}$, $\lambda \in \dC \cup \{\infty\}$, is defined
as the graph restriction of $A$ to the root space $\sR_{\la}(A)$:
 \begin{equation}\label{deux}
A_{\la}=A \cap (\sR_\la(A)\times \sR_\la(A)).
\end{equation}
By~\eqref{il} one has that $A_{S} \subset A_{\lambda}$,  $\lambda \in \dC \cup \{\infty\}$.
 The \emph{root part} $A_R$ of $A$ is a linear relation
defined as the graph restriction of $A$  to the total root subspace $\sR_{r}(A)$ in \eqref{totalrootsp}:
\begin{equation}\label{eq:AR}
A_R =A \cap (\sR_r(A) \times \sR_r(A)).
\end{equation}
Hence it is clear that $A_{S} \subset A_{\lambda} \subset  A_{R}$, $\lambda \in \dC \cup \{\infty\}$.

If $\lambda \not\in \sigma_{\pi}(A)$,
then $\sR_{c}(A)=\sR_\lambda(A)$ and $A_{S}=A_{\lambda}$.
However, if $\lambda \in \sigma_{\pi}(A)$, then
$\sR_{c}(A)\subset \sR_{\lambda}(A)$ and
$A_{S} \subset A_{\lambda}$ with strict inclusion.
there is a
reducing sum decomposition  for $A_{\lambda}$ of the form
\begin{equation}\label{jor}
A_{\lambda}=A_{S} \oplus J_{\lambda}(A),
\end{equation}
where $J_{\lambda}(A)$ is a Jordan operator (if $\lambda \in \dC$) or a Jordan relation (if $\lambda=\infty$), see Definition~\ref{NocheEnMedellin} below. %\marginpar{TB: The previous definition didn't help, as it just said that $J_{\lambda}(A)$ is called a Jordan operator. Below is a new try.
%{\CR Carsten: Na ja, so schlimm war es nun jetzt auch nicht :)}}
Moreover, it will be shown that $J_\lambda(A)$ is spanned
by the corresponding Jordan chains as in \eqref{jchain} or \eqref{Jane};
see Theorems~\ref{Thm:existenceAla}
and~\ref{Thm:existenceAlaInfty} below.
Then in Theorem~\ref{Thm:DecompAS-AJ} below it will be shown that
 \begin{equation}\label{LanadelRey-}
A_R = A_S \oplus J_{\la_1}(A) \oplus \cdots\oplus J_{\la_l}(A)\oplus J_\infty(A),
\end{equation}
where $\lambda_{1}, \dots, \lambda_{l} \in \sigma_{\pi}(A)$ and
$\infty \in \sigma_{\pi}(A)$ are the proper eigenvalues of $A$.
If $\infty \notin \sigma_{\pi}(A)$, then the term
 $J_\infty(A)$ in \eqref{LanadelRey-} is absent.

In this section, the identity \eqref{jor}
will first be shown for the case $\lambda=0$ in Lemma \ref{nullketten}.
The case when $\lambda \in \dC$ will be obtained in Theorem \ref{Thm:existenceAla}
by a shift of the relation from Lemma \ref{nullketten}.
 Likewise, the case when $\lambda=\infty$ will be obtained
in Theorem \ref{Thm:existenceAlaInfty}
by an inversion of the relation from Lemma \ref{nullketten}.

\begin{definition}\label{NocheEnMedellin}
A linear relation $A$ in a finite-dimensional linear space $\sH$ is called a \emph{Jordan operator} in $\sH$ {\rm (}corresponding to $\lambda\in\dC${\rm )}, if $\dom A = \sH$, $\mul A = \{0\}$, and $\sigma_p(A) = \{\lambda\}$. The relation $A$ is called a \emph{Jordan relation} in $\sH$
{\rm (}corresponding to $\infty${\rm )}, if $A^{-1}$ is a Jordan operator {\rm (}corresponding to $0\in\dC${\rm )}.
\end{definition}
Note that Jordan operators are essentially matrices and Jordan relations correspond to
injective multi-valued operators. In particular, it is clear that a Jordan relation (corresponding to $\infty$) satisfies $\ran A = \sH$, $\ker A = \{0\}$, and $\sigma_p(A) = \{\infty\}$.

{ The construction of the Jordan operator $J_{\lambda}(A)$ for $\lambda=0$
is based on an appropriate  choice of a sequence of quotient spaces involving $A$.
The sequence of quotient spaces $\sV_k(A)$ is defined by
\begin{equation}\label{vvv00}
 \sV_1(A)=\frac{\ker A+\sR_c(A)}{\sR_c(A)}, \quad
 \sV_k(A)=\frac{\ker A^{k}+\sR_c(A)}{\ker A^{k-1}+\sR_c(A)},\quad k\geq 2.
\end{equation}
Indeed, since the denominator is contained in the numerator,
each quotient space $\sV_k(A)$, $k \geq 1$, is well defined.
Define the sequence $(d_{k})_{k \geq 1}$ by
%Let the dimension of $\sV_{k}(A)$ be denoted by
\begin{equation}\label{vvv00+}
 d_k := \dim \sV_k(A),\quad k\geq 1.
\end{equation}
Note that the three conditions $0 \not\in \sigma_{\pi}(A)$,
$\sR_{c}(A)=\sR_{0}(A)$ and $d_{k}=0$ for all $k \geq 1$ are all equivalent.

\medskip

Now the case $0 \in \sigma_{\pi}(A)$ or, equivalently,
$\sR_{c}(A) \subsetneq \sR_{0}(A)$, will be considered.
Then the sequence in~\eqref{vvv00+} is not trivial, although ultimately the entries are zero. To see this,
observe that since the space $\sH$ is finite-dimensional, the number
\begin{equation}\label{neerst}
v=\min\setdef{k\in\dN}{\ker A^{k+1}+\sR_c(A)=\ker A^{k}+\sR_c(A)}
\end{equation}
is well defined.

\begin{lemma}
Let $A$ be a linear relation in a finite-dimensional space $\sH$ and
assume $0 \in \sigma_{\pi}(A)$.  Let $v \geq 1$ be given by \eqref{neerst}, then for $k>v$ one has
$$
\ker A^{k}+ \sR_c(A)= \ker A^{d}+ \sR_c(A)\quad \mbox{and, hence,}\quad
d_k=0.
$$
Moreover, $d_1\ge 1$.
\end{lemma}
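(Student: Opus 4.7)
The plan is to model the proof on that of Lemma~\ref{Tja}, with intersections replaced by sums. There are two assertions: stability of the sequence $\ker A^k+\sR_c(A)$ beyond the index $v$, and the non-triviality $d_1\ge 1$.

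\emph{Stability.} By the defining minimality in~\eqref{neerst} it suffices to show that the equality $\ker A^{k+1}+\sR_c(A)=\ker A^{k}+\sR_c(A)$ for a given $k\in\dN$ propagates to $k+1$. The non-trivial inclusion is $\ker A^{k+2}\subset \ker A^{k+1}+\sR_c(A)$, since the reverse holds always and $\sR_c(A)$ is absorbed on both sides. So take $x\in\ker A^{k+2}$. Then there exists $y$ with $(x,y)\in A$ and $y\in\ker A^{k+1}$. By hypothesis, write $y=y'+z$ with $y'\in\ker A^{k}$ and $z\in\sR_c(A)$. From the singular chain structure~\eqref{schain} one has $\sR_c(A)\subset\ran A$ and, more precisely, there exists $z'\in\sR_c(A)\cup\{0\}$ with $(z',z)\in A$. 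Thus $(x-z',y-z)=(x-z',y')\in A$, which gives $x-z'\in\ker A^{k+1}$, and hence $x=(x-z')+z'\in\ker A^{k+1}+\sR_c(A)$. Iterating yields $\ker A^k+\sR_c(A)=\ker A^v+\sR_c(A)$ for every $k>v$, so $d_k=0$.

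\emph{The bound $d_1\ge 1$.} I will argue by contradiction. Suppose instead that $\ker A\subset\sR_c(A)$, i.e., $d_1=0$. Then an induction on $k$ shows $\ker A^k\subset\sR_c(A)$ for every $k\in\dN$: assuming the inclusion holds at $k$, pick $x\in\ker A^{k+1}$ and some $y$ with $(x,y)\in A$ and $y\in\ker A^k\subset\sR_c(A)$; the same trick as above supplies $z'\in\sR_c(A)\cup\{0\}$ with $(z',y)\in A$, whence $(x-z',0)\in A$ and $x-z'\in\ker A\subset\sR_c(A)$, giving $x\in\sR_c(A)$. Consequently $\sR_0(A)=\spn\{\ker A^k:k\in\dN\}\subset\sR_c(A)$, which contradicts the assumption $0\in\sigma_\pi(A)$ via~\eqref{ook333}. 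Therefore $\ker A\not\subset\sR_c(A)$, i.e., $d_1\ge 1$.

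There is no real obstacle beyond recognising that the key property used from~$\sR_c(A)$ is precisely that each of its elements can be written as the second component of some pair in $A$ whose first component again lies in $\sR_c(A)\cup\{0\}$; this is immediate from the definition of a singular chain and is exactly the ingredient that made the parallel Lemma~\ref{Tja} work. The rest is the same quotient-space stabilisation bookkeeping, with the roles of $\ker A^k\cap\sR_c(A)$ and $\ker A^k+\sR_c(A)$ swapped.
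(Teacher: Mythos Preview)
Your proof is correct and, for the stability part, essentially follows the paper's approach: both argue that the equality at level $k$ propagates to $k+1$ by pulling back through $A$ and using that every element of $\sR_c(A)$ is the second entry of some pair in $A$ whose first entry again lies in $\sR_c(A)$.

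For $d_1\ge 1$, your argument is actually more careful than the paper's. The paper asserts that $0\in\sigma_\pi(A)$ implies $\sR_c(A)\subsetneq\ker A$, which is not correct as written (elements of $\sR_c(A)$ need not lie in $\ker A$); the intended claim is presumably $\ker A\not\subset\sR_c(A)$, but even this is not immediate from the definition~\eqref{ook333}, which only yields $\sR_0(A)\not\subset\sR_c(A)$. Your induction showing that $\ker A\subset\sR_c(A)$ would force $\ker A^k\subset\sR_c(A)$ for all $k$, hence $\sR_0(A)\subset\sR_c(A)$, is exactly what is needed to bridge that gap. One cosmetic point: writing $z'\in\sR_c(A)\cup\{0\}$ is redundant, since $\sR_c(A)$ is a linear subspace and already contains $0$.
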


\begin{proof}
In view of \eqref{neerst}, by induction it is sufficient to show that
$\ker A^{k}+ \sR_c(A)=\ker A^{k+1}+ \sR_c(A)$  for some natural number $k$
implies that
$\ker A^{k+1}+ \sR_c(A)=\ker A^{k+2}+ \sR_c(A)$. Let $x\in \ker A^{k+2}+ \sR_c(A)$ and
write $x=x_{k+2}+x_c$ with $x_{k+2}\in\ker A^{k+2}$ and $x_c\in \sR_c(A)$.
Then there exist $x_1,\ldots, x_{k+1}$ such that
\begin{equation*} \label{BistDuDabei?}
(x_{k+2},x_{k+1}), (x_{k+1},x_{k})(x_k,x_{k-1}), \ldots, (x_1,0)\in A.
\end{equation*}
As $x_{k+1}\in \ker A^{k+1}\subset \ker A^{k+1}+ \sR_c(A)=\ker A^{k}+ \sR_c(A)$ by assumption, there exists $x'_c\in \sR_c(A)$ such that $x_{k+1}-x'_c\in \ker A^{k}$ and
there exist $x'_1,\ldots, x'_{k-1}$ such that
$$
(x_{k+1}-x'_c,x'_{k-1}), (x'_{k-1},x'_{k-2}), \ldots, (x'_1,0)\in A.
$$
As $x'_c\in \sR_c(A)$ there is $y'_c\in \sR_c(A)$ with $(y'_c,x'_c) \in A$
and one concludes
$$
(x_{k+2}-y'_c,x_{k+1}-x'_c), (x_{k+1}-x'_c,x'_{k-1}), (x'_{k-1},x'_{k-2}), \ldots, (x'_1,0)\in A
$$
so that $x_{k+2}\in \ker A^{k+1}+ \sR_c(A)$ follows. This shows
$\ker A^{k+2}+ \sR_c(A)\subset \ker A^{k+1}+ \sR_c(A)$.
The opposite inclusion follows from the fact that $\ker A^{k+1} \subset \ker A^{k+2}$.

To see that $d_1\ge 1$, observe that $0 \in \sigma_{\pi}(A)$ implies $\sR_c(A)\subsetneq \ker A$. Hence, $\sR_c(A) \subsetneq \sR_c(A) + \ker A$, which proves the claim.
\end{proof}

\begin{lemma}\label{nullketten}
Let $A$ be a linear relation in a finite-dimensional space $\sH$ with
 $0 \in \sigma_{\pi}(A)$.
Let $v \geq 1$ be given by \eqref{vvv00}, then
the sequence $(d_k)_{k\geq 1}$ in \eqref{vvv00+} satisfies
\begin{equation*}\label{braunp}
 d_{1} \geq d_{2} \geq \cdots  \geq d_{v} \geq 1 \quad \mbox{and} \quad d_{k}=0,
 \quad k > v.
\end{equation*}
%
%
%
%
%Then the sequence
%$(d_k)_{k\geq 1}$ in \eqref{vvv00+} is nonincreasing with
%$d_k=0$  for $k>v$. }
Moreover, there exist
Jordan chains for $A$ corresponding to the eigenvalue $0$
of the following form:
\begin{equation}\label{Henk2}
\begin{array}{rl}
 (x_v^i, x_{v-1}^i),\ (x_{v-1}^i, x_{v-2}^i),\, \dots  , (x_2^i, x_1^i),\ (x_1^i, 0), &  \quad\quad \; 1 \leq i \leq d_v, \\
 (x_{v-1}^i, x_{v-2}^i),\, \dots ,  (x_2^i, x_1^i),\ (x_1^i, 0),         &d_v+1 \leq i \leq d_{v-1}, \\
\ddots\qquad\vdots \quad\qquad \vdots\quad\ \ & \qquad\qquad \vdots \\
   (x_2^i, x_1^i),\ (x_1^i, 0), & d_3+1 \leq i  \leq d_2, \\
  (x_1^i, 0), & d_2+1 \leq i \leq d_1,
\end{array}
\end{equation}
where
$\{[x_{k}^{1}],\ldots,[x_{k}^{d_{k}}]\}$ is a basis  of $\sV_{k}(A)$ in \eqref{vvv00}, $1\le k\le v$,
and, consequently, the elements in the set
$\setdef{x_k^i}{1\le i\le d_k,\ 1\le k\le v}$ are linearly independent in $\sH$.
Then the linear space $\sR_{0}(A)$ has the direct sum decomposition
\begin{equation}\label{ilme}
\sR_0(A)= \sR_c(A) \oplus \sX_0(A),
\end{equation}
where the space $\sX_0(A)$ is given by
\[
\sX_{0}(A)=\spn \setdef{ x_k^i}{1\le i\le d_k,\ 1\le k\le v}.
\]
Furthermore, with respect to \eqref{ilme}, the graph restriction of $A$ to $\sR_0(A) \times \sR_0(A)$,
\[
A_{0}=A \cap (\sR_0(A) \times \sR_0(A))
\]
has the reducing sum decomposition
\begin{equation}\label{EternalLoveAngelicaZapata}
A_0 = A_S \oplus J_0(A),
\end{equation}
where the linear relation $J_{0}(A)=A\cap (\sX_0(A) \oplus \sX_0(A))$ admits the representation
\begin{equation}\label{Bailando}
\begin{split}
 J_{0}(A)&=\spn
\big\{
 (x_k^i, x_{k-1}^i),\,  \dots  \dots , (x_2^i, x_1^i),\ (x_1^i, 0):\, \\
&\hspace{5cm} d_{k+1}+1 \leq i \leq d_{k}, \ 1 \leq k \leq v\big\}.
\end{split}
\end{equation}
In fact, $J_0(A)$ is a Jordan operator in $\sX_0(A)$ corresponding to $0\in\dC$ and the total dimension of $J_0(A)$ is
\begin{equation}\label{eq:dim-J0}
\dim J_0(A) =d_{1}+d_{2}+ \ldots+d_{v-1}+d_{v}.
\end{equation}
\end{lemma}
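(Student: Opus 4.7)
The plan is to mirror the blueprint of Theorem \ref{Blaubeere} essentially verbatim, with $\sK_k(A)$ replaced by $\sV_k(A)$ and singular chains replaced by genuine Jordan chains at $0$. The reason this transfer works is that $\sR_c(A)$ is closed under $A$ in both directions: any element of $\sR_c(A)$ admits both preimages and images in $\sR_c(A)$, so correction terms lying in $\sR_c(A)$ can be silently absorbed into the singular part $A_S$ throughout the construction.

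First I would introduce injective linear relations $A_k \subset \sV_k(A) \times \sV_{k-1}(A)$ for $2 \le k \le v$, defined in analogy with \eqref{ak}, to deduce the non-increasing inequalities $d_1 \ge d_2 \ge \cdots \ge d_v$. Injectivity: if $([x],[0]) \in A_k$, write the image $y' \in \ker A^{k-2}+\sR_c(A)$ as $\tilde y + z$ with $\tilde y \in \ker A^{k-2}$, $z \in \sR_c(A)$, and choose $w \in \sR_c(A)$ with $(w,z) \in A$ together with $\tilde x \in \ker A^{k-1}$ with $(\tilde x, \tilde y) \in A$; then $x' - \tilde x - w \in \ker A$, so $x' \in \ker A^{k-1} + \sR_c(A)$ and $[x] = [0]$. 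The vanishing $d_k = 0$ for $k > v$ is immediate from \eqref{neerst}, while $d_1 \geq 1$ follows from $\sR_c(A) \subsetneq \ker A + \sR_c(A)$ since $0 \in \sigma_\pi(A)$.

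Next I would build the Jordan chains \eqref{Henk2} by descending induction on $k$, starting from a basis $\{[v_v^1], \ldots, [v_v^{d_v}]\}$ of $\sV_v(A)$. As in Step 3 of Theorem \ref{Blaubeere}, each representative $v_k^i$ is corrected by subtracting appropriate lower-level chain contributions together with an element of $\sR_c(A)$, producing $x_k^i \in \ker A^k$ with $(x_k^i, x_{k-1}^i) \in A$ exactly. Injectivity of $A_k$ then guarantees that the resulting $[x_{k-1}^i]$ are linearly independent in $\sV_{k-1}(A)$ and can be enlarged by $d_{k-1}-d_k$ new basis elements to exhaust $\sV_{k-1}(A)$, producing the shorter chains beginning at level $k-1$. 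At the bottom, $[x_1^i] \in \sV_1(A)$ places $x_1^i \in \ker A + \sR_c(A)$, and after the usual correction one arrives at $(x_1^i, 0) \in A$. Linear independence of the resulting $\{x_k^i\}$ follows by the sequential quotient argument of Step 4 of Theorem \ref{Blaubeere}, and the decomposition \eqref{ilme} follows from the identity $\sR_0(A) = \ker A^v + \sR_c(A)$ (a consequence of \eqref{neerst}) together with the telescoping $\dim(\sR_0(A)/\sR_c(A)) = d_1 + \cdots + d_v$.

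Finally, the reducing sum decomposition \eqref{EternalLoveAngelicaZapata} and the explicit form \eqref{Bailando} of $J_0(A)$ are established by an argument patterned on Step 5 of Theorem \ref{Blaubeere}: any $(x,y) \in A_0$ is expanded level by level against the chain basis, at each level using injectivity of $A_{k+1}$ to identify the $\sX_0(A)$-component uniquely and peeling off a singular remainder in $A_S$. Directness of $A_S \oplus J_0(A)$ follows from $\sR_c(A) \cap \sX_0(A) = \{0\}$, which in turn is a consequence of the basis property in the successive quotients. That $J_0(A)$ is a Jordan operator on $\sX_0(A)$ corresponding to $0$ is then immediate: $\dom J_0(A) = \sX_0(A)$ by construction, $\mul J_0(A) = \{0\}$ because $(0,y) \in J_0(A)$ would place $y \in \sR_c(A) \cap \sX_0(A) = \{0\}$, and $\sigma_p(J_0(A)) = \{0\}$ because each generating chain yields $J_0(A)^v = 0$. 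The dimension formula \eqref{eq:dim-J0} follows by counting the pairs listed in \eqref{Bailando}, namely $\sum_{k=1}^v k(d_k - d_{k+1}) = \sum_{k=1}^v d_k$ (with $d_{v+1} = 0$). The main obstacle is the induction step in the construction of the chain representatives: one must ensure that the Jordan chain condition $(x_k^i, x_{k-1}^i) \in A$ holds \emph{exactly} and not merely modulo $\sR_c(A)$, which is precisely where the closure of $\sR_c(A)$ under forward and backward applications of $A$ is essential, allowing the unwanted $\sR_c(A)$-contributions to be absorbed into singular corrections without disturbing the chains already produced at higher levels.
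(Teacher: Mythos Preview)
Your proposal is correct and follows essentially the same architecture as the paper's proof: the same quotient spaces $\sV_k(A)$, the same auxiliary relations $A_k\subset \sV_k(A)\times\sV_{k-1}(A)$, the same descending chain construction, and the same quotient-telescoping for linear independence and \eqref{ilme}. Two places are handled more economically in the paper, though. First, no ``corrections'' are needed in the chain construction: the paper's Step~1 observes that any class in $\sV_k(A)$ already has a representative $x_1\in\ker A^k$ with $(x_1,y_1)\in A$ and $y_1\in\ker A^{k-1}$, so the chain elements can be taken in $\ker A^k$ from the outset (in contrast to Theorem~\ref{Blaubeere}, where a genuine correction was required to achieve $(0,x_d^i)\in A$). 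Second, for \eqref{EternalLoveAngelicaZapata} the paper bypasses the level-by-level expansion entirely: given $(x,y)\in A_0$, split $x=x_1+x_2$ along \eqref{ilme}, choose $(x_1,y_1)\in J_0(A)$ and $(x_2,y_2)\in A_S$, and note that the remainder $(0,y-y_1-y_2)\in A$ lies in $\mul A\cap\sR_0(A)\subset\sR_\infty(A)\cap\sR_0(A)=\sR_c(A)$, hence in $A_S$. This one-line argument also shows $\mul A_k=\{[0]\}$ (so $A_k$ is an operator, which the paper records), and the spectrum claim $\sigma_p(J_0(A))=\{0\}$ is obtained via $\sR_\lambda(A)\cap\sR_0(A)=\sR_c(A)$ rather than nilpotency; your nilpotency argument is of course equally valid.
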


\begin{proof}
 \medskip\noindent
The main tool in the proof is the existence of linear operators
$A_{k}:\sV_{k}(A)\to \sV_{k-1}(A)$, $2 \leq k \leq v$, which are injective.
From this, it follows that the sequence $(d_{k})_{k\ge 1}$ is nonincreasing.
The Jordan chains for $A$ will  be constructed via suitably chosen bases
in each of the quotient spaces $\sV_k(A)$, $1 \leq k \leq v$, beginning with $\sV_{v}(A)$
and working backwards to $\sV_{1}(A)$. This procedure is carried out
in a number of steps.

\medskip\noindent
\emph{Step 1}:
Let $1 \leq k \leq v$ and $[x]\in\sV_{k}(A)$.
Then  there exist $x_1, y_1 \in \sH$ such that
\[
  (x_1,y_1) \in A, \quad x_1 \in \ker A^{k}, \quad y_1 \in \ker A^{k-1}
\quad \mbox{with} \quad [x_1]=[x].
\]
To see this, observe that for $x \in [x]$ one has $x=x_1+x_2$ with $x_1\in\ker A^{k}$
and $x_2\in\sR_c(A)$. Hence,  there is some $y_1\in\ker A^{k-1}$
such that $(x_1,y_1)\in A$. It follows from
\[
x-x_1\in\sR_c(A)\subset \ker A^{k-1}+\sR_{c}(A),
\]
that $[x]=[x_1]$.

\medskip\noindent
\emph{Step 2}:
Define the linear relation $A_{k}\subset \sV_k(A)\times \sV_{k-1}(A)$, $2 \le k \le v$, by:
\begin{equation}\label{ak1}
\begin{split}
A_k& := \big\{ ([x],[y]) \in \sV_k(A) \times \sV_{k-1}(A)  :\, \\
&\hspace{3cm} \exists\, (x',y')\in A \mbox{ with } [x']=[x]\mbox{ and } [y']=[y] \big\}.
\end{split}
\end{equation}
Since $\ker A^{k-1} \subset \ker A^{k-1}+\sR_c(A)$, it follows from Step 1
that $A_{k}$ is defined on all of $\sV_{k}(A)$, $2 \leq k \leq v$.

Moreover, $A_k$ is an operator.
To prove this, let $([0],[y])\in A_k$.
Hence, there exists $(x',y')\in A$ with $y'\in\ker A^{k-1}+\sR_{c}(A)$, such that
$[x']=[0]\in\sV_{k}(A)$ and $[y']=[y]\in\sV_{k-1}(A)$. In particular,
\[
x'=x'_1+x'_2 \quad \mbox{with} \quad  x'_1\in\ker A^{k-1}
\quad \mbox{and}  \quad x'_2\in\sR_c(A).
\]
Therefore, there exist $y'_1\in\ker A^{k-2}$ with $(x'_1,y'_1)\in A$,
and  $y'_2\in \sR_c(A)$ with $(x'_2,y'_2)\in A$.
It follows that $(0,y'-y'_1-y'_2)\in A$, thus $y'_3:=y'-y'_1-y'_2\in \sR_{\infty}(A) \cap \sR_{0}(A) = \sR_c(A)$.
Note that
\[
y'=y'_1+y'_2+y'_3 \quad \mbox{with}  \quad y'_1\in\ker A^{k-2}
\quad \mbox{and} \quad y'_2+y'_3 \in\sR_c(A),
\]
which implies that $[y]=[y']=[0]\in \sV_{k-1}(A)$.

Furthermore, $A_k$ is injective.
To prove this, let $([x],[0])\in A_k$.
Hence, there exists   $(x',y')\in A$ with
$[x]=[x']\in \sV_{k}(A)$ and $[y']=[0]\in \sV_{k-1}(A)$.
In particular,
\[
y'=y'_1+y'_2 \quad \mbox{with} \quad  y'_1\in\ker A^{k-2}
\quad \mbox{and} \quad y'_2\in\sR_c(A).
\]
Therefore, there exists $x'_2\in\sR_c(A)$ with $(x'_2,y'_2)\in A$.
Thus it follows from
\[
(x'-x'_2,y'_1)\in A \quad \mbox{and}  \quad y'_1\in\ker A^{k-2},
\]
that $x'-x'_2\in\ker A^{k-1}$. In other words,  $x' \in \ker A^{k-1} +\sR_{c}(A)$,
which implies that $[x]=[x']=[0]\in\sV_{k}(A)$.

\medskip\noindent
\emph{Step 3}:
The construction of the Jordan chains for $A$,
corresponding to the eigenvalue $0 \in \sigma_\pi(A)$,
is associated with the quotient spaces
$\sV_v(A)$, \dots, $\sV_{1}(A)$, where $v \geq 1$.
 Since $\dim \sV_{v}(A)=d_{v}$, let
\begin{equation}\label{bd1}
\{[x_v^1],\ldots,[x_v^{d_v}]\}
\end{equation}
be a basis of $\sV_v(A)$.

\medskip
First assume that $v=1$. Then by Step 1 each $[x_1^i]$ has a representative
$x_1^i$ such that $(x_1^i,0) \in A$, $1 \leq i \leq d_1$. This agrees with the statement of
the lemma.

\medskip
Next assume that $v \geq 2$.
Then there are  $v-1$ mappings:
\[
A_v: \sV_{v}(A) \to \sV_{v-1}(A), \,A_{v-1}: \sV_{v-1}(A) \to \sV_{v-2}(A),
\,\dots, \, A_2: \sV_{2}(A) \to \sV_{1}(A),
\]
of the form \eqref{ak1}.
By Step~1, without loss of generality one may assume that the choice \eqref{bd1} is such that
there are elements $x_{v-1}^i$ so that for $1\le i \le d_v$:
\begin{equation}\label{een}
(x_v^i,x_{v-1}^i)\in A, \quad  x_v^i\in\ker A^v, \quad  x_{v-1}^i\in \ker A^{v-1}.
\end{equation}
As $x_{v-1}^{i}\in \ker A^{v-1}$,
$1\le i \le d_v$, there exists  $x_{v-2}^{i}\in \ker A^{v-2}$ with $(x_{v-1}^i, x_{v-2}^i)\in A$.
In addition,  by the definition of $A_v$, one has
\[
A_v[x_v^i]=[x_{v-1}^i], \quad 1\le i \le d_v,
\]
Since $A_{v}: \sV_v(A)\to \sV_{v-1}(A)$ is injective by Step 2,
the elements $[x_{v-1}^i]$, $1\le i \le d_v$,
are linearly independent in the space $\sV_{v-1}(A)$.
Now choose
$[x_{v-1}^{d_v+1}],\ldots,[x_{v-1}^{d_{v-1}}]\in\sV_{v-1}(A)$ such that
\begin{equation}\label{bd2}
 \{[x_{v-1}^1],\ldots,[x_{v-1}^{d_v}], [x_{v-1}^{d_v+1}],\ldots,[x_{v-1}^{d_{v-1}}]\}
\end{equation}
forms a basis of $\sV_{v-1}(A)$. Hence, by Step~1, without loss of generality one may assume that $x_{v-1}^{d_v+1},\ldots,x_{v-1}^{d_{v-1}}$ are chosen such that there exist elements $x_{v-2}^i$ so that for
$d_v+1\le i \le d_{v-1}$:
\[
(x_{v-1}^i,x_{v-2}^i)\in A,
\quad x_{v-1}^i\in\ker A^{v-1},\ x_{v-2}^i\in \ker A^{v-2}.
\]
Furthermore, via the basis in \eqref{bd2} for $\sV_{v-1}(A)$,  one finds elements
 $x_{v-2}^i \in \ker A^{v-2}$  such that $(x_{v-1}^i, x_{v-2}^i)\in A$ for $1\le i \le d_{v-1}$.

Repeating this procedure a number of times, one finally arrives at
a basis of $\sV_{1}(A)$ of the form $\{[x_{1}^{1}],\ldots,[x_{1}^{d_{1}}]\}$ with elements $x_1^i \in \ker A$ such that
\[
 (x_1^i, 0) \in A, \quad 1 \leq i \leq d_1.
\]

\medskip\noindent
\emph{Step 4}:
The elements in
$\sX_0(A) = \setdef{x_k^i}{1\le i\le d_k,\ 1\le k\le v}$
 are linearly independent.
In fact, this  follows in the same way as in Step 4 of the proof
of Theorem~\ref{Blaubeere}, when
one replaces $d$ by $v$, $\sK_d(A)$ by $\sV_v$, and $B_k$ by $d_k$.

\medskip\noindent
\emph{Step 5}: The direct sum decomposition in \eqref{ilme} holds. In order to see that $\sX_0(A) \cap \sR_c(A) = \{0\}$,
one uses a similar argument in Step~4:
if $y\in \sR_c(A)$, then $[y]=0$ in all spaces $\sV_{k}$, and so $y\in \sX_0(A)\cap \sR_c(A)$ implies $y=0$.

 It is clear that $\sR_c(A) \oplus \sX_0(A) \subset\sR_0(A)$. To see that equality holds,
note that the definition of $v$ gives
$
\sR_0(A)=\ker A^v+\sR_c(A),
$
while $\sR_c(A)=\ker A^0+\sR_c(A)$. Hence, it follows from  \eqref{vvv00} that
\[
 \dim \frac{\sR_0(A)}{\sR_c(A)} =\dim \frac{\ker A^v+\sR_c(A)}{\ker A^0+\sR_c(A)}
  =\sum_{k=1}^v \dim \sV_k  =\dim \sX_0(A),
\]
which completes the argument. Thus \eqref{ilme} has been shown.

\medskip\noindent
\emph{Step 6}:
Define the linear relation $J_0(A)$ by \eqref{Bailando}.
Then it is clear that $J_0(A) \subset A_0$ and
it follows from \eqref{ilme} that $\dom J_0(A)=\sX_0(A)$.
Since the sum \eqref{ilme} is direct, one sees that
the sum \eqref{EternalLoveAngelicaZapata} is direct.

It is clear that $A_S \oplus J_0(A) \subset A_0$. To see the reverse inclusion,
let $(x,y)\in A_0$.  Since $x\in\sR_0(A)$, it follows from \eqref{ilme}
that $x=x_1+x_2$ with $x_1\in \sX_0(A)=\dom J_0(A)$ and
$x_2\in\sR_c(A)$. Hence there exist elements $y_1\in\ran J_0(A)$
and $y_2\in\sR_c(A)$
such that $(x_1,y_1)\in J_0(A)$ and $(x_2,y_2)\in A_S$. With
$(x,y)\in A$ this gives $(0,y-y_1-y_2)\in A$, hence
\[
y-y_1-y_2\in \mul A \cap \sR_0(A) \subset\sR_c(A),
\]
so that $(0,y-y_1-y_2)\in A_S$.
It follows that
\[
(x,y)=(x_2, y_2) +(0,y-y_1-y_2) +(x_1,y_1)  \in A_S \oplus J_0(A),
\]
thus $A_0\subset A_S \oplus J_0(A)$.
Hence the identity \eqref{EternalLoveAngelicaZapata} has been shown.
The identity $J_{0}(A)=A\cap (\sX_0(A) \oplus \sX_0(A))$ is obvious from
\eqref{EternalLoveAngelicaZapata}.

 By construction,
$J_0(A)$ is an operator in $\sX_0(A)$.
 Assume that $(x,\la x)\in J_0(A)$ for some $\la\not=0$.
Then one sees
\[
x\in \sR_\la(A)\cap \sR_0(A)=\sR_c(A)
\]
by~\eqref{eq:Rc-lambda-mu}. Hence $x\in\sR_c(A) \cap\sX_0(A)=\{0\}$ by Step~4 and $\sigma_p (J_0(A))=\{0\}$ is proved. In particular, $J_0(A)$ is a Jordan operator in $\sX_0(A)$ corresponding to $0\in \dC$.

\medskip\noindent
\emph{Step 7}: It remains to show~\eqref{eq:dim-J0}, which directly follows from~\eqref{Henk2}.
\end{proof}

{ Before stating Theorem \ref{Thm:existenceAla}, some properties
of the shifted relation $A-\lambda$, $\lambda \in \dC$, will be discussed. Fix $\lambda\in\dC$. Then one has the obvious identities
\begin{equation}\label{olala}
\sR_\la(A) =\sR_0(A-\la),
\quad \sR_\infty(A-\lambda)=\sR_\infty(A).
\end{equation}
It is clear from \eqref{Alejandro} and  \eqref{olala} that
\[
 \sR_c(A-\lambda)=\sR_\lambda(A) \cap \sR_\infty(A),
\]
which, invoking~\eqref{eq:Rc-lambda-mu}, gives
\begin{equation}\label{olala+}
\sR_c(A)=\sR_c(A-\la).
\end{equation}
It is clear that
$\lambda \in \sigma_p(A)$ if and only if $0 \in \sigma_p(A-\lambda)$.
This equivalence can be refined:
\begin{equation*}
 \lambda \in \sigma_\pi(A) \quad \Leftrightarrow \quad 0 \in \sigma_\pi(A-\lambda);
\end{equation*}
 cf.\ \eqref{ook333} and \eqref{olala+}.
Hence, one obtains for the total root space (see \eqref{totalrootsp+}) that
\[
 \sR_r(A)=\sR_r(A-\lambda).
\]
As a consquence of \eqref{olala+} and  \eqref{srel}
the relation $A_S-\lambda$ is given by
 \[
\begin{split}
A\cap(\sR_c(A) \times \sR_c(A))-\la
&=(A-\lambda) \cap(\sR_c(A) \times \sR_c(A))\\
&=(A-\la)\cap(\sR_c(A-\la) \times \sR_c(A-\la)),
 \end{split}
\]
which leads to the identity
\begin{equation}\label{hern1}
A_S-\la=(A-\la)_S.
\end{equation}
Similarly, according to \eqref{olala} and \eqref{deux},
the relation $A_\lambda-\lambda$ is given by
\[
\begin{split}
A\cap (\sR_\la(A) \times \sR_\la(A))-\la
&=(A-\lambda)\cap (\sR_\la(A) \times \sR_\la(A)) \\
&=(A-\la)\cap (\sR_0(A-\la) \times \sR_0(A-\la)),
\end{split}
\]
which leads to the identity
\begin{equation}\label{herni2}
 A_\la -\la=(A-\la)_0.
\end{equation}
}

\medskip
{ For the case $\lambda \in \sigma_{\pi}(A) \cap \dC$
it will be shown that the linear relation $A_{\lambda}$,  given in \eqref{deux},
has the  reducing sum decomposition $A_{\lambda}=A_{S} \oplus J_{\lambda}(A)$
involving the completely singular part $A_{S}$ and a Jordan operator $J_{\lambda}(A)$.
 The sequence of quotient spaces $\sZ_k(A,\la)$ is defined by
\begin{equation}\label{vv0}
\begin{split}
&\sZ_1(A,\lambda):=\frac{\ker (A-\la)+\sR_c(A)}{\sR_c(A)},  \\
& \sZ_k(A,\lambda):=\frac{\ker (A-\lambda)^{k}+\sR_c(A)}{\ker (A-\la)^{k-1}+\sR_c(A)},
\quad k\geq 2.
\end{split}
\end{equation}
Since the denominator is included in the numerator,
each quotient space $\sZ_k(A)$, $k \geq 1$, is well defined.
The \textit{Weyr characteristic} of $A$ with respect to the quotient spaces \eqref{vv0}
is defined as the sequence $(W_k(\lambda))_{k\ge 1}$, where
\begin{equation}\label{herni3}
W_k(\lambda) := \dim \sZ_k(A,\lambda).
\end{equation}
One sees from \eqref{vv0} and \eqref{olala+} that
\begin{equation}\label{henni3333}
\sZ_k(A,\lambda)=\sV_k(A-\lambda), \quad W_k(\lambda) =\dim \sV_k(A-\lambda), \quad \lambda \in \dC.
\end{equation}
Since $\sH$ is finite-dimensional, the number
\begin{equation}\label{sla}
s(\lambda)=\min\setdef{k\in\dN}{\ker (A-\la)^{k+1}+\sR_c(A)=\ker (A-\la)^{k}+\sR_c(A)}
\end{equation}
is well defined, as follows from \eqref{neerst} (with $A$ replaced by $A-\lambda$) and \eqref{olala+}.
The following theorem will be proved by means of  Lemma~\ref{nullketten} via a shift.}

\begin{theorem}\label{Thm:existenceAla}
Let $A$ be a linear relation in a finite-dimensional space $\sH$ with $\la \in \sigma_{\pi}(A) \cap \dC$, and let  $s(\lambda) \geq 1$ be given by \eqref{sla}.
Then the Weyr characteristic
$(W_k(\la))_{k\geq 1}$ in \eqref{herni3} satisfies
\[
    W_1(\la) \ge W_2(\la) \ge \cdots \ge W_{s(\la)}(\la) \ge 1\quad \mbox{and}\quad W_k(\la)=0, \quad k>s(\lambda).
\]
Moreover, there exist Jordan chains for $A$
corresponding to~$\lambda$ of the form:
\begin{equation*}
\begin{array}{rl}
 (x_{s(\lambda)}^i, x_{s(\lambda)-1}^i+\la x_{s(\lambda)}^i), \dots, (x_2^i, x_1^i+\la x_2^i),\ (x_1^i, \la x_1^i),
      &1 \leq i \leq W_{s(\lambda)}(\la), \\
 (x_{s(\lambda)-1}^i, x_{s(\lambda)-2}^i+\la x_{s(\lambda)-1}^i), \dots,  (x_1^i,
 \la x_1^i),         &W_{s(\lambda)}(\la)+1 \leq i \\
 & \qquad\qquad \leq W_{s(\lambda)-1}(\la), \\
\ddots\qquad \qquad\qquad \vdots\quad\ \ & \qquad\qquad \vdots \\
  (x_2^i, x_1^i+\la x_2^i),\ (x_1^i,  \la x_1^i), & W_3(\la)+1 \leq i  \leq W_2(\la), \\
  (x_1^i,  \la x_1^i), & W_2(\la)+1 \leq i \leq W_1(\la)
\end{array}
\end{equation*}
where
$\{[x_{k}^{1}],\ldots,[x_{k}^{W_{k}(\la)}]\}$ is a basis
of $\sZ_{k}(A,\la)$ in \eqref{vv0}, $1\le k\le s(\lambda)$, and, consequently,
the elements in the set $\setdef{x_k^i}{1\le i\le W_k(\la),\ 1\le k\le s(\lambda)}$
are linearly independent in $\sH$.
Then the linear space $\sR_{\lambda}(A)$ has the direct sum decomposition
\begin{equation}\label{ilme+}
\sR_\lambda(A)= \sR_c(A) \oplus \sX_\lambda(A),
\end{equation}
where the linear space $\sX_{\lambda}(A)$ is given by
\begin{equation}\label{Bailandoo--}
\sX_{\lambda}(A)=\spn \setdef{ x_k^i}{1\le i\le W_k(\lambda),\ 1\le k\le s(\lambda)}.
\end{equation}
Furthermore, with respect to \eqref{ilme+}, the graph restriction of $A$
to $\sR_\lambda(A) \times \sR_\lambda(A)$,
\[
A_{\lambda}=A \cap (\sR_\lambda(A) \times \sR_\lambda(A))
\]
has the reducing sum decomposition
\begin{equation}\label{EternalLoveAngelicaZapata+}
A_\lambda = A_S \oplus J_\lambda(A),
\end{equation}
where the linear relation $J_{\lambda}(A)=A\cap (\sX_\lambda(A) \times \sX_\lambda(A))$ admits the representation
 \begin{equation}\label{coro}
\begin{split}
 J_{\lambda}(A) &=\spn
\big\{
 (x_k^i, x_{k-1}^i+\la x_k^i),\,  \dots  \dots , (x_2^i, x_1^i+\la x_2^i),\ (x_1^i, \la x_1^i):\, \\
&\hspace{3.5cm} W_{k+1}(\lambda)+1 \leq i \leq W_{k}(\lambda), \ 1 \leq k \leq s(\lambda) \big\},
\end{split}
 \end{equation}
In fact, $J_\lambda(A)$ is a Jordan operator in $\sX_\lambda(A)$ corresponding to~$\lambda$ and the total dimension of $J_\lambda(A)$ is
\begin{equation}\label{eq:dim-Jlambda}
\dim J_\lambda(A) =W_{1}(\lambda)+W_{2}(\lambda)+ \ldots+W_{s(\lambda)}(\lambda).
\end{equation}
 \end{theorem}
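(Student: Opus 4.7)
The plan is to reduce the theorem to Lemma~\ref{nullketten} via the shift $A \mapsto A - \lambda$. Since $\lambda \in \sigma_{\pi}(A) \cap \dC$ if and only if $0 \in \sigma_{\pi}(A-\lambda)$, the lemma applies to $A - \lambda$, and the identities \eqref{olala}, \eqref{olala+}, \eqref{hern1}, \eqref{herni2}, and \eqref{henni3333} will transport every assertion back to $A$.

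First I would apply Lemma~\ref{nullketten} to the relation $A - \lambda$. By \eqref{henni3333} one has $\sZ_k(A,\lambda) = \sV_k(A-\lambda)$, hence $W_k(\lambda) = d_k(A-\lambda)$, and by \eqref{sla} combined with \eqref{neerst} (applied to $A-\lambda$) one has $s(\lambda) = v(A-\lambda)$. The claimed monotonicity and vanishing of the Weyr characteristic, together with the existence of representatives $x_k^i$ forming bases $\{[x_k^1],\ldots,[x_k^{W_k(\lambda)}]\}$ of $\sZ_k(A,\lambda)$, follows immediately. Each Jordan chain at $0$ for $A-\lambda$ of the form $(x_k^i, x_{k-1}^i) \in A - \lambda$ unshifts to $(x_k^i,\, x_{k-1}^i + \lambda x_k^i) \in A$, which is precisely the chain shape in the statement.

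Next I would transport the geometry. Setting
\[
\sX_\lambda(A) := \spn\setdef{x_k^i}{1 \le i \le W_k(\lambda),\ 1 \le k \le s(\lambda)} = \sX_0(A-\lambda),
\]
the decomposition \eqref{ilme} for $A - \lambda$ becomes $\sR_0(A-\lambda) = \sR_c(A-\lambda) \oplus \sX_0(A-\lambda)$, which via \eqref{olala} and \eqref{olala+} reads $\sR_\lambda(A) = \sR_c(A) \oplus \sX_\lambda(A)$, giving \eqref{ilme+}. Defining
\[
J_\lambda(A) := J_0(A-\lambda) + \lambda = \setdef{(x, y+\lambda x)}{(x,y) \in J_0(A-\lambda)},
\]
one reads off \eqref{coro} directly from \eqref{Bailando} applied to $A - \lambda$.

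Then I would establish the reducing sum decomposition \eqref{EternalLoveAngelicaZapata+}. Applying $+\lambda$ to the identity $(A-\lambda)_0 = (A-\lambda)_S \oplus J_0(A-\lambda)$ from \eqref{EternalLoveAngelicaZapata} and invoking \eqref{hern1} and \eqref{herni2} gives
\[
A_\lambda = \big((A-\lambda)_0\big) + \lambda = \big((A-\lambda)_S + \lambda\big) \,\hplus\, \big(J_0(A-\lambda) + \lambda\big) = A_S \,\hplus\, J_\lambda(A).
\]
Since the componentwise sum on the right lives over the direct sum $\sR_c(A) \oplus \sX_\lambda(A)$ of domain-plus-range spaces (by Lemma~\ref{redul} applied to the unshifted decomposition), the sum is direct and the decomposition is reducing with respect to $(\sR_c(A), \sX_\lambda(A))$. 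The identity $J_\lambda(A) = A \cap (\sX_\lambda(A) \times \sX_\lambda(A))$ is then immediate from the analogous identity for $J_0(A-\lambda)$.

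Finally, $J_\lambda(A)$ is a Jordan operator at $\lambda$: by Lemma~\ref{nullketten}, $J_0(A-\lambda)$ is a Jordan operator with spectrum $\{0\}$, and adding $\lambda$ preserves the operator property while shifting the spectrum to $\{\lambda\}$. The dimension formula \eqref{eq:dim-Jlambda} follows from \eqref{eq:dim-J0} since the shift does not alter dimensions. The main subtlety is the careful bookkeeping of the four identifications $\sR_\la \leftrightarrow \sR_0$, $\sR_c \leftrightarrow \sR_c$, $A_S \leftrightarrow (A-\la)_S$, and $A_\la \leftrightarrow (A-\la)_0$; once these are in place, the theorem is essentially a translation of the already-proved Lemma~\ref{nullketten}.
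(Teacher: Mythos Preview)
Your proposal is correct and follows essentially the same approach as the paper: reduce to Lemma~\ref{nullketten} via the shift $A \mapsto A-\lambda$, then use the identities \eqref{olala}, \eqref{olala+}, \eqref{hern1}, \eqref{herni2}, \eqref{henni3333} to transport the Weyr characteristic, the Jordan chains, the direct sum \eqref{ilme+}, and the reducing sum decomposition \eqref{EternalLoveAngelicaZapata+} back to $A$. The only cosmetic difference is that the paper does not invoke Lemma~\ref{redul} explicitly for directness of the sum, deriving it instead directly from the already-established directness of $\sR_c(A)\oplus\sX_\lambda(A)$.
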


\begin{proof}
The assumption $\lambda \in \sigma_{\pi}(A) \cap \dC$
implies that
$0\in \sigma_{\pi}(A -\lambda)$.  Now apply Lemma~\ref{nullketten}
where $A$, $v$, and $d_k$ are replaced by
$A-\lambda$, $s(\lambda)$, and $W_k(\lambda)$.
Then $s(\lambda) \geq 1$ and it is clear  that
$(W_k(\lambda))_{k\geq 1}$ in \eqref{herni3}
is nonincreasing with $W_k(\lambda)=0$  for $k>s(\lambda)$, as follows from \eqref{henni3333}.

Moreover, with the Jordan chains in  Lemma~\ref{nullketten}
interpreted for $A-\lambda$ at the eigenvalue $0$,
the present Jordan chains for $A$ at the eigenvalue $\lambda$ follow.
For this purpose recall that
\begin{equation*}\label{hernia}
 (u_n, u_{n-1}+\la u_n), \dots, (u_2, u_1+\la u_2),\ (u_1, \la u_1)
 \end{equation*}
is a (Jordan) chain for $A$ at   $\lambda$ if and only if
\begin{equation*}\label{hernib}
 (u_n, u_{n-1}), \ (u_{n-1}, u_{n-2}), \dots, (u_2, u_1),\ (u_1, 0)
  \end{equation*}
is a (Jordan) chain for $A-\lambda$ at   $0$.
The statement about the basis of $\sZ_k(A,\lambda)$ follows from  \eqref{herni3}.

\medskip
According to Lemma~\ref{nullketten},
$\sR_{0}(A-\lambda)$
has the direct sum decomposition
\begin{equation}\label{ilmel-}
\sR_0(A-\lambda)= \sR_c(A-\lambda) \oplus \sX_0(A-\lambda),
\end{equation}
and, with respect to \eqref{ilmel-}, the linear relation
$(A-\lambda)_{0}$ has the reducing sum decomposition
\begin{equation}\label{vier-}
(A-\lambda)_0= (A-\lambda)_S \oplus J_0(A-\lambda).
\end{equation}
Using \eqref{olala} and \eqref{olala+} in \eqref{ilmel-}
gives the direct sum decomposition \eqref{ilme+}, where $\sX_\lambda(A)=\sX_0(A-\lambda)$. Likewise, using \eqref{herni2}, \eqref{hern1}  in \eqref{vier-},
 one obtains
 \[
A_\lambda-\lambda=(A_S -\lambda)\oplus J_0(A-\lambda),
\]
or, in other words,  the reducing sum decomposition  \eqref{EternalLoveAngelicaZapata+}, where $J_{\lambda}(A)=J_0(A-\lambda)+\lambda$. Recall from Lemma~\ref{nullketten} that
\[
\sX_0(A-\lambda)=\spn \setdef{ x_k^i}{1\le i\le W_k(\lambda),\ 1\le k\le s(\lambda)},
\]
which gives \eqref{Bailandoo--}. Likewise, one has from Lemma~\ref{nullketten} that
\[
 J_{0}(A-\lambda)=(A-\lambda) \cap (\sX_0(A-\lambda) \oplus \sX_0(A-\lambda)),
\]
which  leads to
\[
J_\lambda(A)=A \cap (\sX_\lambda(A) \times \sX_\lambda(A)),
\]
as stated in the theorem.
Similarly,
 \[
\begin{split}
 J_{0}(A-\lambda)&=\spn
\big\{
 (x_k^i, x_{k-1}^i),\,  \dots  \dots , (x_2^i, x_1^i),\ (x_1^i, 0):\, \\
&\hspace{3cm} W_{k+1}(\lambda)+1 \leq i \leq W_{k}(\lambda), \ 1 \leq k \leq s(\lambda) \big\},
\end{split}
\]
 which gives \eqref{coro}.
 Finally, from Lemma~\ref{nullketten} one obtains
\[
\dom J_{0}(A-\lambda)=\sX_{0}(A-\lambda) \quad \mbox{and}
\quad \sigma_p (J_0(A-\lambda))=\{0\},
\]
 which shows that $J_\lambda(A)$ is a Jordan operator in $\sX_\lambda(A)$ corresponding to~$\lambda$. The formula~\eqref{eq:dim-Jlambda} directly follows from the representation~\eqref{coro}.
 \end{proof}

{ Before stating Theorem \ref{Thm:existenceAlaInfty}, some general properties
of the inverse $A^{-1}$ of a linear relation $A$ will be discussed.  It is easy to see that  for $\lambda \in \dC$ one has:
\begin{equation}\label{hennia-}
 \ker (A-\lambda)=\ker \left(A^{-1} -\frac{1}{\lambda} \right), \quad \lambda \neq 0,
 \quad \mbox{and}  \quad
  \ker A =\mul A^{-1},
\end{equation}
and it is therefore clear that
\begin{equation}\label{hennia-g}
\begin{split}
 &\lambda \in \sigma_p(A) \quad \iff \quad \frac{1}{\lambda} \in \sigma_p(A^{-1}), \quad \lambda \neq 0, \\
  &0 \in \sigma_p(A) \quad\iff \quad \infty \in \sigma_p(A^{-1}).
\end{split}
\end{equation}
In a more general context, one also has
\begin{equation}\label{henni00}
 \sR_\lambda(A)=\sR_{\frac{1}{\lambda}}(A^{-1}), \quad \lambda \neq 0,
 \quad \mbox{and} \quad
\sR_0(A)=\sR_\infty(A^{-1}),
 \end{equation}
see  \cite[Lem.~3.2]{BSTW}.
 It is clear from \eqref{henni00} that
 \begin{equation}\label{henni0}
\sR_c(A)=\sR_c(A^{-1}).
\end{equation}
Moreover, the equivalences in \eqref{hennia-g},
together with \eqref{henni00} and \eqref{henni0}, lead to
\[
\begin{split}
&\lambda  \in \sigma_{\pi}(A) \quad \iff \quad  \frac{1}{\lambda} \in \sigma_{\pi}(A ^{-1}), \quad \lambda \neq 0, \\
 &0\in \sigma_{\pi}(A) \quad \iff \quad \infty \in \sigma_{\pi}(A ^{-1}).
\end{split}
\]
Thus, it follows from the definition of the total root space (see \eqref{totalrootsp+})  that
\begin{equation}\label{nieuw}
{ \sR_r(A)=\sR_r(A^{-1}).}
\end{equation}
As a consequence of \eqref{henni0} and \eqref{srel},
the completely singular part  $(A^{-1})_{S}$ of $A^{-1}$ is given by
\[
 (A^{-1})_S =A^{-1} \cap (\sR_c(A^{-1})\times \sR_c(A^{-1}))
 =A^{-1} \cap (\sR_c(A)\times \sR_c(A)),
\]
which leads to the identity
\begin{equation}\label{henni1}
((A^{-1})_S)^{-1} = A \cap (\sR_c(A)\times \sR_c(A))=A_S.
\end{equation}
Similarly, according to \eqref{henni00} and \eqref{deux},
the relation $(A^{-1})_0$ is given by
\[
(A^{-1})_0=A^{-1} \cap (\sR_0(A^{-1}) \times \sR_0(A^{-1}))=A^{-1} \cap (\sR_\infty(A) \times \sR_\infty(A)),
\]
which leads to the identity
\begin{equation}\label{henni2}
((A^{-1})_0)^{-1}= A \cap (\sR_\infty(A) \times \sR_\infty(A))=A_\infty.
\end{equation}
}

\medskip
{ For the case $\infty \in \sigma_{\pi}(A)$
it will be shown below that the linear relation $A_{\infty}$
has the reducing sum decomposition  $A_{\infty}=A_{S} \oplus J_{\infty}(A)$
involving the completely singular part $A_{S}$ and a Jordan relation $J_{\infty}(A)$.
The sequence of quotient spaces $\sW_k(A)$ is defined by
\begin{equation}\label{grijp0}
\sW_1(A):=\frac{\mul A+\sR_c(A)}{\sR_c(A)}, \quad
 \sW_k(A):=\frac{\mul A^{k}+\sR_c(A)}{\mul A^{k-1}+\sR_c(A)},\quad k\geq 2.
\end{equation}
Since the denominator is included in the numerator,
each quotient space $\sW_k(A)$, $k \geq 1$, is well defined.
The \textit{Weyr characteristic} of $A$ with respect to the quotient spaces
\eqref{grijp0}
is defined as the sequence $(A_k)_{k\ge 1}$, where
\begin{equation}\label{grijp1}
A_k := \dim \sW_k(A), \quad k \geq 2.
\end{equation}
One sees from \eqref{hennia-} and \eqref{henni0}  that
\begin{equation}\label{henni3}
 \sW_k(A)=\sV_k(A^{-1}), \quad A_k = \dim \sW_k(A)=  \dim \sV_k(A^{-1}).
\end{equation}
Since $\sH$ is finite-dimensional, the number
\begin{equation}\label{alef}
\aleph =\min\setdef{k\in\dN}{\mul A^{k+1}+\sR_c(A)=\mul A^{k}+\sR_c(A)}
\end{equation}
is well defined, as follows from \eqref{neerst} (with $A$ replaced by $A^{-1}$) and \eqref{henni0}.
The following theorem will be proved by
means of Lemma~\ref{nullketten} via an inversion.}

\begin{theorem}\label{Thm:existenceAlaInfty}
Let $A$ be a linear relation in a finite-dimensional space $\sH$ with
$\infty \in \sigma_{\pi}(A)$, and let $\aleph \geq 1$ be given by \eqref{alef}.
Then the Weyr characteristic $(A_k)_{k\geq 1}$ in \eqref{grijp1}
satisfies
\[
 A_1 \ge A_2 \ge \cdots \ge A_{\aleph} \ge 1\quad \mbox{and}\quad A_k=0, \quad k>\aleph.
\]
Moreover,
there exist Jordan chains for $A$ corresponding to the eigenvalue
$\infty$ of the form
\begin{equation*}
\begin{array}{ll}
(0, x_1^i),\ (x_1^i,x_2^i),\ \dots, (x_{\aleph-2}^i, x_{\aleph -1}^i),\ (x_{\aleph -1}^i, x_\aleph^i), &1 \leq i \leq A_\aleph,\\
(0,x_{1}^i),\ (x_1^i, x_2^i),\ \dots,  (x_{\aleph-2}^i,  x_{\aleph-1}^i), &A_\aleph+1 \leq i \leq A_{\aleph -1},\\
\ \ \quad \vdots \quad\qquad \vdots \qquad \iddots & \qquad \qquad\ \vdots \\
  (0, x_1^i),\ (x_1^i, x_2^i), & A_3+1 \leq i  \leq A_2, \\
  (0, x_1^i), & A_2+1 \leq i \leq A_1,
\end{array}
\end{equation*}
where
$\{[x_{k}^{1}],\ldots,[x_{k}^{A_{k}}]\}$ is a basis  of $\sW_{k}(A)$ in~\eqref{grijp0}, $1\le k\le \aleph$,
and, consequently, the elements in the set $\setdef{x_{k}^i}{1\le i\le A_k,\ 1\le k\le \aleph}$
are linearly independent in~$\sH$.
Then the linear space $\sR_\infty(A)$ has the direct sum decomposition
\begin{equation}\label{69TrollzMinaj}
\sR_\infty(A)= \sR_c(A) \oplus \sX_\infty(A),
\end{equation}
where the linear space $\sX_\infty(A)$ is given by
\begin{equation}\label{Bailandoo-}
 \sX_\infty(A)=\spn \setdef{ x_k^i}{1\le i\le A_k,\ 1\le k\le \aleph}.
\end{equation}
Furthermore, with respect to \eqref{69TrollzMinaj}, the graph restriction of $A$ to $\sR_\infty(A) \times \sR_\infty(A)$,
\[
A_\infty=A \cap (\sR_\infty(A) \times \sR_\infty(A))
\]
has the reducing sum decomposition
\begin{equation}\label{grii}
A_\infty = A_S \oplus J_{\infty}(A),
\end{equation}
where the linear relation $J_\infty(A)=A \cap (\sX_\infty(A) \times \sX_\infty(A))$ is given by
\begin{equation}\label{Bailandoo}
\begin{split}
 J_{\infty}(A)&=\spn
\big\{(0, x_1^i), \ (x_1^i, x_2^i), \, \dots  \dots , (x_{k-1}^i, x_{k}^i) :\, \\
 &\hspace{4cm} A_{k+1}+1 \leq i \leq A_{k}, \ 1 \leq k \leq \aleph \big\}.
\end{split}
\end{equation}
 In fact, $J_\infty(A)$ is a Jordan relation in $\sX_\infty(A)$ corresponding to~$\infty$ and the total dimension of $J_\infty(A)$ is
\begin{equation}\label{eq:dim-Jinfty}
\dim J_\infty(A) =A_{1}+A_{2}+ \ldots+A_{\aleph}.
\end{equation}
 \end{theorem}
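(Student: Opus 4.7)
The plan is to reduce the theorem to Lemma~\ref{nullketten} applied to the inverse relation $A^{-1}$, exactly in parallel with the shift reduction used in the proof of Theorem~\ref{Thm:existenceAla}. The bridge between $A$ and $A^{-1}$ is provided by the identities collected just before the statement: $\sR_c(A)=\sR_c(A^{-1})$ from~\eqref{henni0}, $\sR_\infty(A)=\sR_0(A^{-1})$ from~\eqref{henni00}, $\sW_k(A)=\sV_k(A^{-1})$ from~\eqref{henni3}, and the graph-restriction identities~\eqref{henni1}, \eqref{henni2}. Since $\infty\in\sigma_\pi(A)$ is equivalent to $0\in\sigma_\pi(A^{-1})$, Lemma~\ref{nullketten} is indeed applicable to $A^{-1}$.

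First I would apply Lemma~\ref{nullketten} to $A^{-1}$ with the translation dictionary $v\leftrightarrow\aleph$ (using the definitions \eqref{neerst} and \eqref{alef} together with \eqref{henni0}) and $d_k\leftrightarrow A_k$ (using \eqref{henni3}). This immediately yields the monotonicity $A_1\ge A_2\ge\cdots\ge A_\aleph\ge 1$ and $A_k=0$ for $k>\aleph$, as well as a basis $\{[x_k^1],\ldots,[x_k^{A_k}]\}$ of each $\sW_k(A)=\sV_k(A^{-1})$. The lemma also provides Jordan chains for $A^{-1}$ at the eigenvalue $0$, together with the direct sum $\sR_0(A^{-1})=\sR_c(A^{-1})\oplus \sX_0(A^{-1})$ and the reducing sum decomposition $(A^{-1})_0=(A^{-1})_S\oplus J_0(A^{-1})$.

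Next I would invert. A Jordan chain for $A^{-1}$ at $0$ has the form $(x_k^i,x_{k-1}^i),\ldots,(x_1^i,0)\in A^{-1}$, i.e.\ in $A$ one reads off $(0,x_1^i),(x_1^i,x_2^i),\ldots,(x_{k-1}^i,x_k^i)\in A$. These are exactly the chains displayed in the theorem. Setting $\sX_\infty(A):=\sX_0(A^{-1})$ and using $\sR_\infty(A)=\sR_0(A^{-1})$, $\sR_c(A)=\sR_c(A^{-1})$ turns \eqref{ilme} into \eqref{69TrollzMinaj}, while $\sX_\infty(A)$ is spanned by the chain entries as in~\eqref{Bailandoo-}. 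Defining $J_\infty(A):=(J_0(A^{-1}))^{-1}$ and inverting the reducing sum decomposition $(A^{-1})_0=(A^{-1})_S\oplus J_0(A^{-1})$ yields, via \eqref{henni1} and \eqref{henni2}, the identity $A_\infty=A_S\oplus J_\infty(A)$ in~\eqref{grii}; moreover, since $J_0(A^{-1})=A^{-1}\cap(\sX_0(A^{-1})\times\sX_0(A^{-1}))$ by Lemma~\ref{nullketten}, inversion gives $J_\infty(A)=A\cap(\sX_\infty(A)\times\sX_\infty(A))$, and the explicit span in~\eqref{Bailandoo} is the inverted form of~\eqref{Bailando}.

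Finally, Lemma~\ref{nullketten} guarantees that $J_0(A^{-1})$ is a Jordan operator in $\sX_0(A^{-1})$ corresponding to $0\in\dC$, and by Definition~\ref{NocheEnMedellin} its inverse $J_\infty(A)$ is a Jordan relation in $\sX_\infty(A)$ corresponding to $\infty$. The dimension formula \eqref{eq:dim-Jinfty} is immediate from \eqref{eq:dim-J0} applied to $A^{-1}$, since inversion preserves dimension of the graph. I expect no serious obstacle, as the whole argument is bookkeeping; the only point requiring care is verifying that the inverted chain \emph{direction} matches the chain pattern stated in the theorem (i.e.\ that a chain starting at $0$ and growing upward is correctly produced from a chain descending to $0$), and that the reducing sum decomposition is preserved under inversion, which follows from $(X\oplus Y)^{-1}=X^{-1}\oplus Y^{-1}$ componentwise.
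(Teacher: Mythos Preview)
Your proposal is correct and follows essentially the same route as the paper's proof: apply Lemma~\ref{nullketten} to $A^{-1}$ via the dictionary $v\leftrightarrow\aleph$, $d_k\leftrightarrow A_k$, $\sV_k(A^{-1})=\sW_k(A)$, then invert the resulting chains, the direct sum~\eqref{ilme}, and the reducing sum decomposition using~\eqref{henni00}--\eqref{henni2}. The paper carries out exactly these steps, including the definitions $\sX_\infty(A)=\sX_0(A^{-1})$ and $J_\infty(A)=(J_0(A^{-1}))^{-1}$, and your remarks about the chain direction and the componentwise inversion of the direct sum are precisely the minor bookkeeping points one needs to check.
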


\begin{proof}
The assumption $\infty \in \sigma_{\pi}(A)$ implies that
$0\in \sigma_{\pi}(A ^{-1})$.  Now apply Lemma~\ref{nullketten}
where $A$, $v$, and $d_k$ are replaced by
$A^{-1}$, $\aleph$, and $A_k$.
Then $\aleph \geq 1$ and it is clear  that
$(A_k)_{k\geq 1}$ in \eqref{grijp1}
is nonincreasing with $A_k=0$  for $k>\aleph$, as follows from \eqref{henni3}.

\medskip
{ Moreover, with the Jordan chains in  Lemma~\ref{nullketten}
interpreted for $A^{-1}$ at the eigenvalue $0$,
the present Jordan chains for $A$ at the eigenvalue $\infty$ follow.
For this purpose recall that
 \begin{equation*}\label{hennia}
 (u_n, u_{n-1}),\ (u_{n-1}, u_{n-2}),\ \dots \dots  \dots , (u_2, u_1),\ (u_1, 0)
\end{equation*}
is a Jordan chain for $A$ at $0$ if and only if
\begin{equation*}\label{hennib}
  (0,u_1), \ (u_1,u_2), \ \dots \dots \dots , (u_{n-2}, u_{n-1}, \ (u_{n-1},u_n)
\end{equation*}
is a Jordan chain for $A^{-1}$ at $\infty$.
The statement about the basis of $\sW_k(A)$
follows from  \eqref{henni3}.}

\medskip
According to Lemma~\ref{nullketten},
$\sR_{0}(A^{-1})$
has the direct sum decomposition
\begin{equation}\label{ilmel}
\sR_0(A^{-1})= \sR_c(A^{-1}) \oplus \sX_0(A^{-1}),
\end{equation}
and, with respect to \eqref{ilmel}, the linear relation
$(A^{-1})_{0}$ has the reducing sum decomposition
\begin{equation}\label{vier}
(A^{-1})_0= (A^{-1})_S \oplus J_0(A^{-1}).
\end{equation}
Using \eqref{henni00} and \eqref{henni0} in \eqref{ilmel} gives the direct sum decomposition \eqref{69TrollzMinaj}, where $\sX_\infty(A)=\sX_0(A^{-1})$. Likewise, using \eqref{henni1}, \eqref{henni2}, and  taking inverses in \eqref{vier}, one obtains
the reducing sum decomposition  \eqref{grii}, where $J_{\infty}(A)=(J_0(A^{-1}))^{-1}$. Recall from Lemma~\ref{nullketten} that
\[
\sX_0(A^{-1})=\spn \setdef{ x_k^i}{1\le i\le A_k,\ 1\le k\le \aleph},
\]
which gives \eqref{Bailandoo-}. Likewise, one has from Lemma~\ref{nullketten}
\[
 J_{0}(A^{-1})=A^{-1}\cap (\sX_0(A^{-1}) \oplus \sX_0(A^{-1})),
\]
which, taking inverses, leads to
\[
J_\infty(A)=A \cap (\sX_\infty(A) \times \sX_\infty(A)),
\]
as stated in the theorem.
Similarly,
 \[
\begin{split}
 J_{0}(A^{-1})&=\spn
\big\{
 (x_k^i, x_{k-1}^i),\,  \dots  \dots , (x_2^i, x_1^i),\ (x_1^i, 0):\, \\
&\hspace{4cm} A_{k+1}+1 \leq i \leq A_{k}, \ 1 \leq k \leq \aleph \big\}.
\end{split}
\]
 Taking the inverse of the linear relations on both sides of the above equation
gives~\eqref{Bailandoo}.
Note that $J_0(A^{-1})$ is an operator and that $\ker J_\infty(A)=\mul J_0(A^{-1})=\{0\}$;
hence the relation $J_\infty(A)$ is injective.
Finally, recall
from Lemma~\ref{nullketten} that
\[
\dom J_{0}(A^{-1})=\sX_{0}(A^{-1}) \quad \mbox{and} \quad \sigma_p (J_0(A^{-1}))=\{0\},
\]
by which $J_\infty(A)$ is a Jordan relation in $\sX_\infty(A)$ corresponding to~$\infty$. The formula~\eqref{eq:dim-Jinfty} directly follows from the representation~\eqref{Bailandoo}.
\end{proof}

Theorems~\ref{Thm:existenceAla} and \ref{Thm:existenceAlaInfty} will now be combined
to show the main result of this section: a reducing sum decomposition of the root part.

\begin{theorem}\label{Thm:DecompAS-AJ}
Let $A$ be a linear relation in a finite-dimensional space $\sH$
and let $\sigma_{\pi}(A) \setminus \{\infty\}= \{\lambda_1,\ldots,\lambda_l\}$.
For all $\la\in\si_{\pi}(A)\setminus \{\infty\}$ let $J_\la(A)$ be the Jordan operator in $\sX_{\lambda}(A)$ corresponding to $\lambda$
such that $A_\la = A_S \oplus  J_\la(A)$ is the reducing sum decomposition
as in Theorem~{\rm \ref{Thm:existenceAla}}
and, if $\infty \in \sigma_{\pi}(A)$, let $J_\infty(A)$ be the Jordan relation in $\sX_{\infty}(A)$
such that $A_\infty = A_S \oplus  J_\infty(A)$ is the reducing sum decomposition
as in Theorem~{\rm \ref{Thm:existenceAlaInfty}}.
 Then $\sR_r(A)$ has the direct sum decomposition
\begin{equation}\label{direkt}
\sR_r(A)=\sR_c(A) \oplus \sX_{\la_1}(A) \oplus \cdots \oplus \sX_{\la_l}(A)\oplus \sX_\infty(A).
\end{equation}
Furthermore, with respect to \eqref{direkt},
the linear relation $A_R$ has the reducing sum decomposition
\begin{equation}\label{LanadelRey}
A_R = A_S \oplus J_{\la_1}(A)\oplus \cdots\oplus J_{\la_l}(A)\oplus J_\infty(A).
\end{equation}
% Consequently, $J_{\la_1}(A) \oplus \cdots \oplus J_{\la_l}(A)\oplus J_\infty(A)$
%is a linear relation in the linear space
%\[
%\sX_{\la_1}(A) \oplus \cdots \oplus \sX_{\la_l}(A)\oplus \sX_\infty(A),
%\]
%with point spectrum equal to $\sigma_{\pi}(A)$.
%In particular,
%\[
% \mul J_{\la_i}(A)=\{0\}, \quad \dom J_{\la_i}(A)=\sX_{\lambda_i}(A),
% \quad \sigma_p(J_{\lambda_i}(A))=\{\lambda{_i}\}, \quad i=1, \dots,l,
%\]
%and
%\[
% \ker J_{\infty}(A)=\{0\}, \quad \ran J_{\infty}(A)=\sX_{\infty}(A),
% \quad \sigma_p(J_{\infty}(A))=\{\infty\}.
%\]
Furthermore, one has the equalities
\begin{equation}\label{un}
\sR_{r}(A)= \dom A_{R} +\ran J_{\infty}(A) = \ran A_{R} +\dom J_{0}(A).
\end{equation}
If $\infty \notin \sigma_{\pi}(A)$, then the space $\sX_\infty(A)$
and the linear relation $J_\infty(A)$ are absent.
\end{theorem}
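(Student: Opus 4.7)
The plan is to establish the theorem in three stages corresponding to the three claims: first the direct sum of spaces \eqref{direkt}; then the lift to the reducing sum decomposition \eqref{LanadelRey} of $A_R$; and finally the two identities \eqref{un}.

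For \eqref{direkt}, the sum $\sR_r(A) = \sR_c(A) + \sum_j \sX_{\lambda_j}(A) + \sX_\infty(A)$ is immediate from the definition \eqref{totalrootsp+} together with the individual decompositions $\sR_\lambda(A) = \sR_c(A) \oplus \sX_\lambda(A)$ established in Theorems \ref{Thm:existenceAla} and \ref{Thm:existenceAlaInfty}. To prove directness, suppose $x_c + \sum_j x_j + x_\infty = 0$ with $x_c \in \sR_c(A)$, $x_j \in \sX_{\lambda_j}(A)$, and $x_\infty \in \sX_\infty(A)$. Rearranging isolates $x_k$ on one side, lying in $\sR_{\lambda_k}(A)$, and a sum of the remaining terms, lying in $\sR_c(A) + \sum_{j\neq k}\sR_{\lambda_j}(A) + \sR_\infty(A)$. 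Applying \eqref{eq:Rc-lambda-mu} inductively forces the isolated term into $\sR_c(A) \cap \sX_{\lambda_k}(A) = \{0\}$, and similar reasoning disposes of each of the remaining summands.

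For \eqref{LanadelRey}, denote the right-hand side by $B$. The sum defining $B$ is direct as a subspace of $\sH \times \sH$ because the associated domain-and-range subspaces $\sR_c(A)$, $\sX_{\lambda_j}(A)$, $\sX_\infty(A)$ are in direct sum by the previous step. The inclusion $B \subseteq A_R$ is immediate. What I expect to be the principal obstacle is the reverse inclusion $A_R \subseteq B$; my plan here is a dimension argument. Using $\dim A_R = \dim \dom A_R + \dim \mul A_R$ together with $\mul A_R = \mul A \cap \sR_r(A) = \mul A = \mul A_S \oplus \mul J_\infty(A) = \mul B$ (the penultimate equality following from the reducing sum decomposition $A_\infty = A_S \oplus J_\infty(A)$ of Theorem \ref{Thm:existenceAlaInfty}), it suffices to show $\dom A_R = \dom B$. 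The inclusion $\dom B \subseteq \dom A_R$ is clear, and for the converse one must verify that every $x = x_c + \sum x_j + x_\infty \in \dom A_R$ satisfies $x_\infty \in \dom J_\infty(A)$; the point is that for $x$ to admit a partner $y$ remaining in $\sR_r(A)$, the $\sX_\infty(A)$-component cannot reach the top of a chain at infinity. This is to be made precise by combining the reducing sum decompositions of $A_\lambda$ for $\lambda \in \sigma_\pi(A)$ with the explicit chain structure of each $\sX_\lambda(A)$, subtracting off the finite-eigenvalue contributions from a chosen witness pair so as to land inside $A_\infty$ and invoke Theorem \ref{Thm:existenceAlaInfty}.

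Finally, the identities \eqref{un} follow directly from the reducing sum decomposition. One has $\dom A_R = \sR_c(A) \oplus \bigoplus_j \sX_{\lambda_j}(A) \oplus \dom J_\infty(A)$, because each finite $J_{\lambda_j}(A)$ is everywhere defined on $\sX_{\lambda_j}(A)$ and $\dom A_S = \sR_c(A)$; adding $\ran J_\infty(A) = \sX_\infty(A)$ then supplies the part of $\sX_\infty(A)$ missing from $\dom J_\infty(A)$ and recovers $\sR_r(A)$ via \eqref{direkt}. The second identity is proved symmetrically using $\ran J_{\lambda_j}(A) = \sX_{\lambda_j}(A)$ for $\lambda_j \neq 0$ together with $\dom J_0(A) = \sX_0(A)$ when $0 \in \sigma_\pi(A)$, and is vacuous otherwise.
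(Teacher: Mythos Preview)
Your overall plan follows the paper closely, and the arguments for \eqref{LanadelRey} and \eqref{un} are sound (your dimension-count packaging for \eqref{LanadelRey} is a minor reorganization of the paper's direct element-chase, but the decisive step---subtracting off the finite-eigenvalue contributions so as to land in $A_\infty$ and then invoking the reducing decomposition $A_\infty=A_S\oplus J_\infty(A)$---is the same key idea the paper uses).

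There is, however, a genuine gap in your directness argument for \eqref{direkt}. You write that ``applying \eqref{eq:Rc-lambda-mu} inductively forces the isolated term into $\sR_c(A)\cap\sX_{\lambda_k}(A)=\{0\}$.'' But \eqref{eq:Rc-lambda-mu} is a \emph{pairwise} statement, $\sR_\lambda(A)\cap\sR_\mu(A)=\sR_c(A)$, and it does not yield the multi-term statement
\[
\sR_{\lambda_k}(A)\cap\Big(\sR_c(A)+\sR_\infty(A)+\sum_{j\ne k}\sR_{\lambda_j}(A)\Big)=\sR_c(A)
\]
by any straightforward induction: when you isolate $x_k$, the other side is a \emph{sum} of root spaces, and from $V_1\cap V_2=W$ and $V_1\cap V_3=W$ one cannot conclude $V_1\cap(V_2+V_3)=W$ in general. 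Attempting to peel off one summand at a time just reproduces an intersection of the same shape with the same total number of root spaces. The paper does not try to derive this from \eqref{eq:Rc-lambda-mu}; it invokes \cite[Cor.~4.5]{BSTW}, whose proof (carried out in that reference) uses the chain structure rather than an elementary subspace induction. You should either cite that result as the paper does, or supply an independent argument (for instance, an adaptation of the classical polynomial-in-$A$ argument for linear independence of generalized eigenspaces, carried out modulo $\sR_c(A)$).
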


\begin{proof}
First, it will be shown that the identity  \eqref{direkt} holds and that the sum is direct.
Recall from Theorem~\ref{Thm:existenceAla} that $\sR_{\la_i}(A)=\sR_c(A)\oplus \sX_{\la_i}(A)$
for $1\le i\le l$, and $\sR_{\la}(A)=\sR_c(A)$ for $\la\notin \sigma_{\pi}(A)$ by~\eqref{ook333}.
Hence, the following identity is clear:
\begin{equation*}
\sR_r(A)=\sR_c(A)+\sX_{\la_1}(A) + \cdots + \sX_{\la_l}(A)+ \sX_\infty(A).
\end{equation*}
To see that the sum on the right-hand side is direct, let the elements
$x_c\in\sR_c(A)$, $x_i\in \sX_{\la_i}(A)$, $1\le i\le l$, and
$x_\infty \in \sX_\infty(A)$ be such that
\[
x_c+x_1 + \cdots + x_l +x_\infty =0.
\]
Then it follows from \cite[Cor.~4.5]{BSTW} that
\[
x_i\in \sR_{\la_i}(A) \cap \left(\sR_c(A)+\sR_\infty(A)+\sum_{j=1,j\not=i}^l \sR_{\la_j}(A)\right)=\sR_c(A),\quad 1\le i\le l,
\]
hence $x_i\in \sX_{\la_i}(A) \cap \sR_c(A)=\{0\}$. In a similar way one obtains
$x_\infty =0$ and concludes that  $x_c=0$. Thus the sum in~\eqref{direkt}  is direct.

\medskip\noindent
 Next, it will be shown that the identity  \eqref{LanadelRey} holds and that the sum is direct.
Observe that it follows from \eqref{direkt}
that the sum
\[
A_S \oplus J_{\la_1}(A) \oplus \cdots \oplus J_{\la_l}(A) \oplus J_\infty(A)
\]
is direct. From  Theorem~\ref{Thm:existenceAla} and Theorem~\ref{Thm:existenceAlaInfty} one finds that
\[
A_S \oplus J_{\la_1}(A) \oplus \cdots \oplus J_{\la_l}(A) \oplus J_\infty(A) \subset A_R.
\]
 In order to show the reverse inclusion, let $(x,y)\in A_R$.
 Then $x\in \sR_r(A)$
and, by~\eqref{direkt},  there exist $x_c\in \sR_c(A)$,
$x_i \in \dom J_{\lambda_i}(A)$, $1 \leq i \leq l$, and $x_{\infty} \in \ran J_{\infty}(A)$ such that
\[
x=x_c+x_1+ \cdots + x_l+x_\infty.
\]
Hence,  there exist $y_c\in \sR_c(A)$ with $(x_c,y_c)\in A$ and
$y_i\in\ran J_{\lambda_i}(A)$ with $(x_i,y_i)\in A$ for $1\le i\le l$ so that
\[
(x,y)= (x_c,y_c) + (x_1,y_1) + \cdots + (x_l,y_l) + (x_\infty , \tilde y := y- y_c-y_1-\cdots-y_l) \in A.
\]
In particular, one has
\[
(x_c,y_c) \in A_S, \quad (x_i,y_i) \in J_{\lambda_i}, \ 1 \leq i \leq l,\quad \mbox{ and }\quad (x_\infty , \tilde y) \in A.
\]
As $x_\infty \in \sR_\infty(A)$ it follows that $\tilde y \in \sR_\infty(A)$. By \eqref{69TrollzMinaj} there exist $\tilde y_\infty \in \sX_\infty(A)$ and $\hat y \in \sR_c(A)$
 such that $\tilde y = \tilde y_\infty + \hat y$, and $\tilde x_\infty \in \sX_\infty(A)$ such that $(\tilde x_\infty, \tilde y_\infty)\in J_\infty(A) \subset A$. Then
\[
    (x,y) = (x_c,y_c) + (x_1,y_1) + \cdots + (x_l,y_l) + (\tilde x_\infty, \tilde y_\infty) + (x_\infty - \tilde x_\infty,\hat y) \in A
\]
and one sees that the last term belongs to $A$.
 But since $x_\infty - \tilde x_\infty\in \sX_\infty(A)$
and $\hat y \in \sR_c(A)$ it follows that
$x_\infty - \tilde x_\infty \in \sR_c(A) \cap \sX_\infty(A) = \{0\}$, thus $x_\infty = \tilde x_\infty$.
Therefore, $(x,y)$ belongs to the right-hand side of \eqref{LanadelRey}.

Finally,  equation~\eqref{un} directly follows from Theorems~\ref{Blaubeere}, ~\ref{Thm:existenceAla} and~\ref{Thm:existenceAlaInfty}.
\end{proof}

Recall from \eqref{domran} that the restriction of $A$ to $\sH(A)=\dom A + \ran A$
is formally the same relation but in a possibly smaller space.
In particular the space $\sR_{r}(A)$ is contained in $\sH(A)$:
\[
 \sR_{r}(A) \subset \dom A + \ran A.
\]
As a consequence of Theorem \ref{Thm:DecompAS-AJ} it is possible to
characterize when equality holds.
The case of strict inclusion will be considered in detail
in  Section \ref{Sec:Multishift}.

\begin{corollary}\label{greijp}
Let $A$ be a linear relation in a finite-dimensional space $\sH$.
Then the following statements are equivalent:
\begin{enumerate}\def\labelenumi{\rm(\roman{enumi})}
\item $\dom A+\ran A = \sR_r(A)$,
\item $\dom A \subset \sR_r(A)$,
\item $\ran A \subset \sR_r(A)$.
\end{enumerate}
\end{corollary}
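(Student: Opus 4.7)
The plan is to observe that (i)$\Rightarrow$(ii) and (i)$\Rightarrow$(iii) are immediate from $\dom A,\,\ran A\subset \dom A+\ran A=\sR_r(A)$, so the content lies in proving the two converses. The implication (iii)$\Rightarrow$(i) follows from (ii)$\Rightarrow$(i) applied to the inverse relation $A^{-1}$, using $\dom A^{-1}=\ran A$, $\ran A^{-1}=\dom A$, and $\sR_r(A^{-1})=\sR_r(A)$ from~\eqref{nieuw}; it therefore suffices to establish (ii)$\Rightarrow$(i).

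Assume $\dom A\subset \sR_r(A)$ and fix $(x,y)\in A$; the goal is to show $y\in \sR_r(A)$. Using the direct sum decomposition~\eqref{direkt} of Theorem~\ref{Thm:DecompAS-AJ}, write
\[
x=x_c+\sum_{i=1}^l x_i+x_\infty,\qquad x_c\in \sR_c(A),\ x_i\in \sX_{\la_i}(A),\ x_\infty\in \sX_\infty(A).
\]
By Theorems~\ref{Blaubeere} and~\ref{Thm:existenceAla} one has $\dom A_S=\sR_c(A)$ and $\dom J_{\la_i}(A)=\sX_{\la_i}(A)$, so one may choose $y_c\in \sR_c(A)$ with $(x_c,y_c)\in A_S$ and $y_i\in \sX_{\la_i}(A)$ with $(x_i,y_i)\in J_{\la_i}(A)$ for $1\le i\le l$. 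Subtracting from $(x,y)\in A$ leaves $(x_\infty,y')\in A$, where $y':=y-y_c-\sum_{i=1}^l y_i$. Since $y_c$ and all $y_i$ already lie in $\sR_r(A)$, it remains to show $y'\in \sR_r(A)$.

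The main obstacle is that $\sX_\infty(A)$ generally properly contains $\dom J_\infty(A)$ (the ``top'' elements $x_k^i$ with $A_{k+1}+1\le i\le A_k$ are not in $\dom J_\infty(A)$), so a partner for $x_\infty$ cannot simply be read off inside $J_\infty(A)$. The fix is to use forward-invariance of $\sR_\infty(A)$: because $(0,0)\in A$, any chain witnessing $(0,z)\in A^k$ can be prepended by a trivial step, so $\mul A^k\subset \mul A^{k+1}$, and finite-dimensionality of $\sH$ forces this ascending chain to stabilize, giving $\sR_\infty(A)=\mul A^M$ for some $M\ge 1$. Hence $x_\infty\in \sX_\infty(A)\subset \mul A^M$, i.e.\ $(0,x_\infty)\in A^M$; composing with $(x_\infty,y')\in A$ yields $(0,y')\in A^{M+1}$, so $y'\in \mul A^{M+1}\subset \sR_\infty(A)\subset \sR_r(A)$, completing the proof of (ii)$\Rightarrow$(i).
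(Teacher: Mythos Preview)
Your proof is correct and is essentially the dual of the paper's argument: the paper proves (iii)$\Rightarrow$(i) directly by decomposing $y\in\sR_r(A)$ via the identity $\sR_r(A)=\ran A_R+\dom J_0(A)$ from~\eqref{un} and using that preimages of $\ker A^i$ lie in $\ker A^{i+1}$, then obtains (ii)$\Leftrightarrow$(iii) by the symmetry $A\leftrightarrow A^{-1}$; you prove (ii)$\Rightarrow$(i) directly by decomposing $x\in\sR_r(A)$ via~\eqref{direkt} and using that images of $\mul A^M$ lie in $\mul A^{M+1}$, then derive (iii)$\Rightarrow$(i) by the same symmetry. The only minor difference is that instead of invoking the ready-made identity~\eqref{un}, you supply a self-contained stabilization argument for $\sR_\infty(A)=\mul A^M$, which makes your version marginally more elementary but otherwise the two proofs are mirror images of one another.
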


\begin{proof}
(i) $\Rightarrow$ (iii) is clear.

(iii) $\Rightarrow$ (i): Since $\ran A \subset \sR_{r}(A)$,
it suffices to show that $\dom A \subset  \sR_r(A)$.
Assume that $x\in\dom A$, then $(x,y)\in A$ for some $y\in\ran A \subset \sR_r(A)$.
Then $y=y_r+y_0$ with $y_r\in\ran A_R$ and $y_0\in\dom J_0(A)$
according to \eqref{un}, hence $(x_r,y_r)\in A_R$ for some $x_r\in \sR_r(A)$.
The decomposition
\[
(x,y)=(x_r,y_r)+(x-x_r,y_0)
\]
shows that $(x-x_r,y_0)\in A$. Due to $y_0\in\dom J_0(A) \subset \ker A^{i}$ for some $i$ one finds $x-x_{r} \in \ker A^{i+1}\subset \sR_{r}(A)$.
Hence, it follows that
$x\in\sR_r(A)$.

(ii) $\Leftrightarrow$ (iii): This is due to the symmetry when $A$ is replaced by $A^{-1}$; cf. \eqref{nieuw}.
 \end{proof}

\section{The multishift part of a linear relation}\label{Sec:Multishift}

Let $A$ be a linear relation in a finite-dimensional space $\sH$.
In this section it will be shown that
there exists a linear subspace $\sR_m(A)\subseteq \sH$
spanned by entries of linearly independent shift chains
such that the restriction of $A$ to $\sR_m(A) \times \sR_m(A)$ is a multishift.

\medskip

The construction of the shift chains in $A$ is based on an
appropriate choice of a sequence of quotient spaces.
 The sequence of quotient spaces $\sM_k(A)$ is defined by
\begin{equation}\label{eq:Dk}
\sM_0(A):=\frac{\dom A +\ran A}{\ran A+\sR_r(A)}, \qquad
\sM_k(A):=\frac{\ran A^k+\sR_r(A)}{\ran A^{k+1}+\sR_r(A)},\quad k\geq 1.
\end{equation}
Indeed, as the denominator is included in the numerator, each quotient space $\sM_k(A)$,
$k \geq 0$,
is well defined.
The \textit{Weyr characteristic} of $A$ with respect to \eqref{eq:Dk}
is defined as the sequence $(C_{k})_{k \geq 0}$, where
\begin{equation}\label{AdanYEva}
C_k:=\dim \sM_k(A), \quad k\geq 0.
\end{equation}
If $\dom A+\ran A = \sR_r(A)$, then by Corollary~\ref{greijp} this condition is equivalent
to $\ran A \subset \sR_{r}(A)$.
This implies that $\ran A^{k} \subset \sR_{r}(A)$ for all $k \geq 1$,
so that $C_{k}=0$ for all $k \geq 0$. %Thus, if  $\dom A+\ran A = \sR_r(A)$, then
%the quotient spaces in  \eqref{eq:Dk} are all trivial.
In this case one may define $\sR_{m}(A):=\{0\}$ and then the restriction
$A_{M}$ is given by
$A_{M}=A \cap (\sR_{m}(A)  \times \sR_{m}(A))=\{0,0\}$.

\medskip

Now consider the case that the inclusion $\sR_{r}(A) \subset \dom A+\ran A $
is strict. Then the sequence in \eqref{AdanYEva} is not trivial, although ultimately
the entries are zero. To see this, observe that since
 the linear space $\sH$ is finite-dimensional, the
number
\begin{equation}\label{eq:d_mult}
m=\min \setdef{k\in\dN}{\ran A^{k+1}+\sR_r(A)=\ran A^{k}+\sR_r(A)}
\end{equation}
is well defined.

\begin{lemma}\label{Caramelo}
Let $A$ be a linear relation in a finite-dimensional space $\sH$ and assume
that the inclusion $\sR_r(A) \subset \dom A+\ran A$  is strict.  Then the Weyr characteristic $(C_{k})_{k \geq 0}$ is nontrivial.
In fact, with $m$ given by \eqref{eq:d_mult}, one has
\begin{equation}\label{wiederzuruck}
 \ran A^{k}+\sR_r(A)= \ran A^{k+1}+\sR_r(A),   \quad k \geq m,
 \end{equation}
and it follows that $C_{k}=0$, $k \geq m$. Moreover,
 \begin{equation}\label{shoor}
 \ran A^k \subset \sR_r(A), \quad k \geq m,
 \end{equation}
which, in particular, implies that $m \geq 2$ and, consequently, $C_{1} \geq 1$.
\end{lemma}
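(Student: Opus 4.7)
The plan is to prove the lemma in four stages, mirroring the structure of the analogous Lemma~\ref{Tja} but working with ranges and the total root space rather than kernels and the singular chain subspace.

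First, I would establish monotonicity: $\ran A^{k+1} \subset \ran A^k$ for every $k \ge 0$, by truncating the first edge of any chain of length $k+1$ ending at $y \in \ran A^{k+1}$. Consequently $(\ran A^k + \sR_r(A))_{k \ge 0}$ is a decreasing sequence of subspaces of the finite-dimensional space $\sH$, and stabilization forces the minimum~$m$ in \eqref{eq:d_mult} to exist.

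Second, I would prove the propagation identity~\eqref{wiederzuruck} by induction on $j \ge 0$, showing $\ran A^{m+j} + \sR_r(A) = \ran A^m + \sR_r(A)$. The inclusion $\subseteq$ is automatic from monotonicity; the reverse direction uses the $A$-invariance of $\sR_r(A)$, namely that $(u,w) \in A$ with $u \in \sR_r(A)$ forces $w \in \sR_r(A)$. I would establish this invariance by decomposing $u$ according to~\eqref{direkt} and exploiting Theorems~\ref{Blaubeere},~\ref{Thm:existenceAla}, and~\ref{Thm:existenceAlaInfty} to track each component, noting in particular that $\mul A \subset \sR_\infty(A) \subset \sR_r(A)$ absorbs any multi-valued slack. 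Given $y \in \ran A^{k+1}$ with chain $(x_0,\ldots,x_{k+1}=y) \in A$, the inductive hypothesis splits $x_k = x_k' + v$ with $x_k' \in \ran A^{k+1}$ and $v \in \sR_r(A)$; after arranging $v \in \dom A$, invariance yields some $(v,w) \in A$ with $w \in \sR_r(A)$, and subtracting from $(x_k,y) \in A$ gives $(x_k',y-w) \in A$, so $y-w \in \ran A^{k+2}$ and $y \in \ran A^{k+2} + \sR_r(A)$.

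Third, to deduce $\ran A^k \subset \sR_r(A)$ for $k \ge m$, I would pass to the stable value $V := \bigcap_k \ran A^k$ of the decreasing sequence of ranges, so that iterating the propagation yields $\ran A^m + \sR_r(A) = V + \sR_r(A)$. Since $A(V \cap \dom A) = V$ (a consequence of $\ran A^{k+1} = A(\ran A^k \cap \dom A)$ combined with $\ran A^k = V$ for all large $k$), every $v \in V$ is the endpoint of arbitrarily long $A$-chains staying inside $V$; a finite-dimensional linear dependence argument on these chains extracts a Jordan-type relation that places $v$ in some root space of~$A$, giving $V \subset \sR_r(A)$ and hence $\ran A^k \subset \sR_r(A)$ for $k \ge m$. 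Finally, $m \ge 2$ follows by contradiction via Corollary~\ref{greijp}: if $m = 1$, then $\ran A \subset \sR_r(A)$, which by that corollary forces $\dom A + \ran A = \sR_r(A)$, contradicting the standing hypothesis of strict inclusion. Consequently $C_1 = \dim \sM_1(A) \ge 1$, and the Weyr characteristic $(C_k)_{k \ge 0}$ is nontrivial.

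The main obstacle is the technical manoeuvre inside stage two: arranging the summand $v$ to lie in $\dom A$ after decomposing $x_k = x_k' + v$. This is delicate because the ``top'' elements of the infinity Jordan chains in $\sX_\infty(A)$ need not belong to $\dom A$, so one must exploit the freedom to redistribute $v$ modulo $\ran A^{k+1} \cap \sR_r(A)$ (which leaves the two classes $x_k' + \ran A^{k+1}$ and $v + \sR_r(A)$ unchanged) in order to place $v$ inside $\dom A$ before invoking invariance.
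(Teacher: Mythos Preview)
Your proposal is essentially correct and follows the same four-stage skeleton as the paper's proof (monotonicity/stabilization, induction for propagation, a linear-dependence chain argument for \eqref{shoor}, and Corollary~\ref{greijp} for $m\ge 2$). The one substantive difference is how the obstacle you flag in stage two is handled. The paper does not redistribute $v$ ad hoc; instead it first proves the reduction
\[
\sR_r(A)+\ran A^{k}=\sR_0(A)+\ran A^{k},\qquad k\ge 1,
\]
by showing $\sR_c(A)$, $\sX_\lambda(A)$ for $\lambda\neq 0$, and $\sX_\infty(A)$ are all contained in $\ran A^{k}$ (Step~1). After this, the inductive step runs with $v\in\sR_0(A)\subset\dom A$ automatically, and the difficulty you isolate simply does not arise. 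Your proposed fix---absorbing the non-$\sR_0$ components of $v$ back into $\ran A^{k+1}$---is viable, but when made precise it \emph{is} this reduction; stating it up front is cleaner and avoids the delicate redistribution. For stage three, the paper works directly with $\ran A^{m}$ (again modifying the chain at each step via $\sR_0(A)$) rather than passing to the stable intersection $V$, but both arguments ultimately build an everywhere-defined operator $B\subset A^{-1}$ on the span of chain elements and invoke $\sR_r(B)\subset\sR_r(A^{-1})=\sR_r(A)$.
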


\begin{proof}
The proof is divided into a number of steps.

\medskip\noindent
\textit{Step 1}: First observe that
\begin{equation}\label{raran}
\sR_r(A)+\ran A^k=\sR_0(A)+\ran A^k, \quad k\ge 1.
\end{equation}
To prove \eqref{raran} it suffices to show that the left-hand side
is contained in the right-hand side.
Recall from \eqref{direkt}, that
\begin{equation*}
\sR_r(A)=\sR_{\not=0}(A)\oplus \sX_{0}(A) \quad \mbox{where}
\quad  \sR_{\not=0}(A)=\sR_c(A) \oplus \sum_{\la\in\si_\pi \setminus \{0\}}\sX_{\la}(A) .
\end{equation*}
Fix $k \geq 1$. Since $\sR_c(A)$ is spanned by entries of singular chains~\eqref{schain}, it is clear that $\sR_c(A) \subset \ran A^k$.
Moreover, for $\la\in\si_\pi(A) \setminus \{0,\infty\}$ the Jordan
 operator $J_\lambda(A)$ defined in Theorem~\ref{Thm:existenceAla} is a bijection in $\sX_\lambda(A)$,
from which it follows that $\ran J_\lambda(A)=\sX_\lambda(A)$. Hence,
\[
 \sX_\lambda(A)=\ran (J_\lambda(A))^k \subset \ran A^k,
\]
where the last inclusion follows from $J_\lambda(A)\subset A$.
For the Jordan relation $J_\infty(A)$ defined in Theorem~\ref{Thm:existenceAlaInfty} it is immediate that $\ran J_\infty(A)=\sX_\infty(A)$, so that
\[
 \sX_\infty(A)=\ran (J_\infty(A))^k \subset \ran A^k,
\]
where the last inclusion is due to $J_\infty(A) \subset A$.
Therefore one finds that
\begin{equation*}\label{reduce}
\sR_{\not=0}(A)\subset \ran A^k, \quad k\ge 1.
\end{equation*}
Hence \eqref{raran} has been shown.

\medskip\noindent
\textit{Step 2}:
In order to prove \eqref{wiederzuruck},
 by \eqref{raran} it is sufficient to show that
\begin{equation}\label{kakaone}
\ran A^{k}+\sR_0(A)= \ran A^{k+1}+\sR_0(A), \quad k \geq m.
\end{equation}
By induction it suffices  to show that
$\ran A^{k}+ \sR_0(A)=\ran A^{k+1}+ \sR_0(A)$  for some $k \geq m$
implies that
$\ran A^{k+1}+ \sR_0(A)=\ran A^{k+2}+ \sR_0(A)$.
It will be shown that
\begin{equation}\label{kkakaone}
\ran A^{k+1}+ \sR_0(A)\subset \ran A^{k+2}+ \sR_0(A),
\end{equation}
since the converse inclusion follows from the inclusion
$\ran A^{k+2}\subset \ran A^{k+1}$.
Let $x\in \ran A^{k+1}+ \sR_0(A)$.
Then $x = x_r + x_0$ for some $x_r\in \ran A^{k+1}$ and $x_0\in\sR_0(A)$.
Furthermore, there exists $x_k\in \ran A^k$ such that $(x_k,x_r)\in A$.
By assumption one has
\[
x_k=x_{k+1}+z_0 \quad \mbox{with} \quad x_{k+1}\in \ran A^{k+1}
\quad \mbox{and} \quad z_0\in \sR_0(A).
\]
and hence there exists $y_0\in\sR_0(A)$ with $(z_0,y_0)\in A$. With
$(x_{k+1}+z_0, x_r)\in A$ it follows that $(x_{k+1}, x-x_0-y_0)\in A$ and
$x-x_0-y_0\in\ran A^{k+2}$. Hence one obtains $x\in\ran A^{k+2}+ \sR_0(A)$.
This shows \eqref{kkakaone}.

\medskip\noindent
\textit{Step 3}: To prove \eqref{shoor},
 it suffices to show that $\ran A^m \subset \sR_r(A)$, since
 $\ran A^k \subset \ran A^m$ for $k\ge m$.
Let $x\in \ran A^{m}$, then $x=x_1+x_1^0$ with
$x_1\in \ran A^{m+1}$  and $x_1^0\in\sR_0(A)$ by~\eqref{kakaone}.
Hence, there exists
$y_1\in\ran A^{m}$ with $(y_1,x_1)\in A$. Again by~\eqref{kakaone},
$y_1=x_2+x_2^0$ with
$x_2\in \ran A^{m+1}$  and $x_2^0\in\sR_0(A)$, and
 there is some
$x_2^1\in\sR_0(A)$ such that $(x_2^0, x_2^1)\in A$.
With $(x_2+x_2^0, x_1)=(y_1,x_1) \in A$
it follows that $(x_2, x_1-x_2^1)\in A$.

Next observe that there exists $y_2\in\ran A^{m}$ with $(y_2,x_2)\in A$, and by \eqref{kakaone},
$y_2=x_3+x_3^0$ with
$x_3\in \ran A^{m+1}$  and $x_3^0\in\sR_0(A)$. Moreover, there are
$x_3^1\in\sR_0(A)$ and $x_3^2\in\sR_0(A)$ such that $(x_3^0, x_3^1)\in A$ and
$(x_3^1, x_3^2)\in A$.  With $(x_3+x_3^0, x_2)=(y_2,x_2)\in A$
it follows that $(x_3, x_2-x_3^1)\in A$, and
$$
(x_3, x_2-x_3^1),\quad (x_2-x_3^1, x_1-x_2^1-x_3^2)
$$
form a chain in $A$. A continuation of this argument leads to a chain of the form
 $$
(z_n, z_{n-1}), \ldots , (z_3, z_2), (z_2, z_1)\in A,
$$
where each $z_k$ is of the form $z_k=x_k+z_k^0$ with $x_k\in \ran A^{m+1}$
and $z_k^0\in\sR_0(A)$.
Let $l \geq 2$ be the smallest index such that $z_1,\ldots, z_{l-1}$ are
linearly independent and $z_l\in\spn\left\{z_1,\ldots, z_{l-1}\right\}$.
Let $B=\spn\left\{(z_k, z_{k+1}): 1\le k\le l-1\right\}$.
 Then $B$ is an everywhere defined linear operator in
$\spn\left\{z_1,\ldots, z_{l-1}\right\} = \sR_r(B)$ with $B \subset A^{-1}$.
By means of \eqref{nieuw} one has that
$\sR_r(B) \subset \sR_r(A^{-1}) =\sR_r(A)$,
thus  $z_1\in \sR_r(A)$.
Hence, $x=x_1+x_1^0 = z_1-z_1^0 + x_1^0 \in\sR_r(A)$ follows.

\medskip\noindent
\textit{Step 4}: If $m=1$, then it follows from \eqref{shoor} that
$\ran A \subset \sR_{r}(A)$.
By Corollary~\ref{greijp} this contradicts the assumption that the inclusion
$\sR_r(A) \subset \dom A+\ran A$  is strict.
Thus $m \geq 2$ and, in particular,
$\ran A^{2} +\sR_{r}(A)$ is a proper subset of $\ran A +\sR_{r}(A)$,
which implies that $C_{1} \geq 1$.
\end{proof}
}

\begin{theorem}\label{Thm:DecompAR-AM}
Let $A$ be a linear relation in a finite-dimensional space $\sH$ and assume
that the inclusion $\sR_r(A) \subset \dom A+\ran A$  is strict, so that
$m$  in \eqref{eq:d_mult} satisfies $m \geq 2$. Then
the Weyr characteristic $(C_k)_{k\geq 0}$ in \eqref{AdanYEva} satisfies
\begin{equation*}\label{brauna}
 C_{0} = C_{1} \geq \cdots  \geq C_{m-1} \geq 1 \quad \mbox{and} \quad C_{k}=0,
 \quad k \geq m.
\end{equation*}
Moreover, there exist shift chains for $A$ of the following form
\begin{equation}\label{Henk3}
 \begin{array}{ll}
(x^i_0, x_{1}^i),\,
 (x_1^i, x_{2}^i),\,   \dots  ,
  (x_{m-3}^i, x^i_{m-2}),\ (x_{m-2}^i, x^i_{m-1}), & \quad\quad\quad \ \ 1 \leq i \leq C_{m-1}, \\
(x^i_0, x_{1}^i),\,
 (x_1^i, x_{2}^i),\,  \dots , (x_{m-3}^i, x^i_{m-2}), &C_{m-1}+1 \leq i \leq C_{m-2}, \\
\ \ \quad \vdots \quad\qquad \vdots \qquad \iddots & \quad \qquad \qquad\ \vdots \\
 (x^i_0, x_{1}^i),\,
 (x_1^i, x_{2}^i), & \quad \ C_3+1 \leq i \leq C_2,\\
 (x^i_0, x_{1}^i), & \quad \ C_2+1 \leq i \leq C_1,\\
\end{array}
\end{equation}
where
$\{[x_{k}^{1}],\ldots,[x_{k}^{C_{k}}]\}$ is a basis  of $\sM_{k}(A)$, $0\leq k\leq m-1$. The elements in $\setdef{x_k^i}{1\le i\le C_k,\ 0\le k\leq m-1}$ are linearly independent in $\sH$. Then $\sH(A) = \dom A+\ran A$ has the direct sum decomposition
\begin{equation}\label{50cent}
\sH(A)=\sR_r(A) \oplus \sR_m(A),
\end{equation}
where
\begin{equation}\label{lilnasX}
\sR_m(A):=\spn \setdef{ x_k^i}{1\le i\le C_k,\ 0\le k\le m-1}.
\end{equation}
Furthermore, with respect to the decomposition \eqref{50cent},
the relation $A$  has the reducing sum decomposition
\begin{equation}\label{9Euro}
A=A_R \oplus A_M,
\end{equation}
where $A_M:= A\cap (\sR_m(A) \times \sR_m(A))$ admits the representation
\begin{equation}\label{eq:ChainStrucAM}
\begin{split}
 A_M&=\spn
\big\{
 (x^i_0, x_{1}^i),\,
 (x_1^i, x_{2}^i),\,   \dots  \dots , (x_{k-1}^i, x^i_{k}):\, \\
&\hspace{5cm} C_{k+1}+1 \leq i \leq C_{k}, \ 1 \leq k \leq m-1\big\}.
\end{split}
\end{equation}
In fact, $A_M$ is a multishift and the total dimension of $A_{M}$ is
\begin{equation}\label{eq:dim-AM}
\dim A_{M} =C_{1}+C_{2}+ \ldots+C_{m-2}+C_{m-1}.
\end{equation}
\end{theorem}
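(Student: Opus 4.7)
The plan is to mimic the template of Theorem~\ref{Blaubeere}, Lemma~\ref{nullketten}, and Theorems~\ref{Thm:existenceAla}--\ref{Thm:existenceAlaInfty}, adapted to the shift-chain setting governed by the quotient spaces in~\eqref{eq:Dk}, with chains built from representatives of bases of $\sM_{m-1}(A), \sM_{m-2}(A), \ldots, \sM_0(A)$ in that order. First I would establish injective linear maps $\tau_{k+1}\colon \sM_{k+1}(A)\to\sM_k(A)$ for $k\ge 1$, defined by $\tau_{k+1}[y]:=[x]$ whenever $(x,y)\in A$ with $x\in\ran A^k$ and $y\in\ran A^{k+1}$; well-definedness and injectivity rest on Step~1 of Lemma~\ref{Caramelo} together with $\ker A,\mul A\subset\sR_r(A)$. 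At the boundary, the analogous map $\bar\psi\colon\sM_0(A)\to\sM_1(A)$, $[u]\mapsto[y]$ for $(u,y)\in A$, is shown to be a bijection: surjectivity is immediate, while injectivity reduces to showing that $(u,y)\in A$ with $y\in\ran A^2+\sR_r(A)$ implies $u\in\ran A+\sR_r(A)$. This is done by subtracting off preimages in $\dom A$ of the $\ran A^2$, $\sR_c(A)$, $\sX_\lambda(A)$ (for $\lambda\ne 0$) and $\sX_\infty(A)$ components of $y$ coming from Theorem~\ref{Thm:DecompAS-AJ}, leaving a residual whose $A$-image lies in $\sX_0(A)\subset\ker A^v$; the residual therefore sits in $\ker A^{v+1}\subset\sR_0(A)\subset\sR_r(A)$, using $\sR_0(A)\subset\dom A$. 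Combined with Lemma~\ref{Caramelo}, this yields $C_0=C_1\ge C_2\ge\cdots\ge C_{m-1}\ge 1$ and $C_k=0$ for $k\ge m$.

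I would build the shift chains by induction from the top down: starting with a basis $\{[x_{m-1}^i]\}_{i=1}^{C_{m-1}}$ of $\sM_{m-1}(A)$ with representatives $x_{m-1}^i\in\ran A^{m-1}$, inductively find $x_{k-1}^i\in\ran A^{k-1}$ with $(x_{k-1}^i,x_k^i)\in A$ for $i\le C_k$ (these span $\ran\tau_k$ by injectivity), extend to a basis of $\sM_{k-1}(A)$ by fresh representatives $x_{k-1}^i$ ($C_k<i\le C_{k-1}$) chosen outside $\ran\tau_k$, and at level $0$ use $\bar\psi^{-1}$. The crucial property of a fresh top $x_k^i$ (with $i>C_{k+1}$) is that $(x_k^i,z)\in A$ forces $z\in\sR_r(A)$: otherwise $\tau_{k+1}[z]=[x_k^i]\in\ran\tau_{k+1}$ contradicts the choice (for $k=m-1$ use $\ran A^m\subset\sR_r(A)$ from Lemma~\ref{Caramelo}). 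Linear independence of $\{x_k^i\}$ in $\sH$ follows by the equivalence-class argument of Step~4 of Theorem~\ref{Blaubeere}; combined with $\sR_r(A)\cap\sR_m(A)=\{0\}$ and the dimension count $\sum_{k=0}^{m-1}C_k=\dim\sH(A)/\sR_r(A)$, this yields the direct sum~\eqref{50cent}.

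For the reducing sum $A=A_R\oplus A_M$, directness is free from $\sR_r(A)\cap\sR_m(A)=\{0\}$ and $A_R\hplus A_M\subset A$ is trivial. For the reverse, given $(u,v)\in A$, decompose $u=u_R+u_M$ via~\eqref{50cent} and $u_M=u_M^{\text{nt}}+u_M^{\text{top}}$ into non-top and top parts. The shift pairs $(x_{j-1}^i,x_j^i)\in A$ give $(u_M^{\text{nt}},y')\in A$ with $y'\in\sR_m(A)$; subtracting leaves $(u_R+u_M^{\text{top}},v-y')\in A$. If $u_M^{\text{top}}\ne 0$, let $k_{\min}$ be the smallest level at which it has a non-zero component; then $u_R+u_M^{\text{top}}\in\ran A^{k_{\min}}+\sR_r(A)$, and by Step~1 of Lemma~\ref{Caramelo} combined with $\sR_0(A)\subset\dom A$ one deduces $v-y'\in\ran A^{k_{\min}+1}+\sR_r(A)$ and $\tau_{k_{\min}+1}[v-y']=[u_M^{\text{top}}]$. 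But $[u_M^{\text{top}}]$ is a nontrivial combination of fresh basis elements outside $\ran\tau_{k_{\min}+1}$, a contradiction. Hence $u_M^{\text{top}}=0$, so $(u_M,y')\in A_M$ and $(u_R,v-y')\in A$ with $u_R\in\sR_r(A)\cap\dom A$; $A$-invariance of each $\sR_\lambda(A)\cap\dom A$ places the second coordinate in $\sR_r(A)$, giving $(u_R,v-y')\in A_R$. This simultaneously yields the representation~\eqref{eq:ChainStrucAM}, and the multishift property of $A_M$ is immediate from $\sR_\lambda(A)\cap\sR_m(A)=\{0\}$ for every $\lambda\in\dC\cup\{\infty\}$; formula~\eqref{eq:dim-AM} follows from~\eqref{eq:ChainStrucAM}.

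The main obstacle is exactly this reducing-sum step: the subspace-level direct sum $\sH(A)=\sR_r(A)\oplus\sR_m(A)$ does not automatically lift to a direct sum of relations, and the lift only succeeds because the fresh tops were chosen outside $\ran\tau_{k+1}$, which both pins their $A$-images in $\sR_r(A)$ and, via the contradiction above, forces the top coefficients of any $u\in\dom A$ to vanish.
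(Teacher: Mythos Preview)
Your proposal is correct and follows the same overall architecture as the paper: you define injective linear maps between the successive quotient spaces $\sM_k(A)$ (your $\tau_{k+1}$ coincides with the paper's $\hat B_{k+1}$, and your $\bar\psi$ is the inverse of the paper's $\hat B_1$), build the shift chains top--down, and deduce linear independence and the direct sum~\eqref{50cent} by the standard equivalence-class and dimension-count arguments. The paper does exactly this in Steps~1--4.

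The one genuine divergence is in the proof of the reducing sum $A=A_R\oplus A_M$. The paper (Steps~5--6) decomposes the \emph{second} coordinate of a pair $(x,y)\in A$, first proving via a dimension identity that $\ran A\cap\sR_m(A)=\spn\{x_k^i:k\ge 1\}$ and then pulling back through the explicit chain structure; the residual first coordinate lands in $\ker A\subset\sR_0(A)$. You instead decompose the \emph{first} coordinate, split $u_M$ into ``non-top'' and ``top'' parts, and eliminate the top part by contradiction using that fresh basis elements span a complement to $\ran\tau_{k+1}$. Your route avoids the paper's auxiliary Step~5 but pays for it with the top/non-top bookkeeping and a contradiction argument; the paper's route is more direct once Step~5 is in hand. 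Both are valid and of comparable length.

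Two minor remarks. First, your verification that $\bar\psi$ is injective via the full decomposition of $\sR_r(A)$ from Theorem~\ref{Thm:DecompAS-AJ} is heavier than necessary; the paper handles all the $\hat B_k$ uniformly using only the identity $\sR_r(A)+\ran A^k=\sR_0(A)+\ran A^k$ (Step~1 of Lemma~\ref{Caramelo}) together with the $A^{-1}$-invariance of $\sR_0(A)$. Second, your phrasing of the ``crucial property'' (that $(x_k^i,z)\in A$ forces $z\in\sR_r(A)$) is slightly misleading: the real conclusion of your argument is that a fresh top $x_k^i$ with $i>C_{k+1}$ cannot lie in $\dom A$ at all (since its image would land in $\ran A^{k+1}$ and force $[x_k^i]\in\ran\tau_{k+1}$), so the implication is vacuous. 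This does not affect the correctness of the reducing-sum step, where you work with linear combinations and the complement property directly.
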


\begin{proof}
Throughout the proof the identity~\eqref{raran} will be used. The proof is carried out in several steps.

\medskip\noindent
\emph{Step 1}:
For $1\le k\le m-1$, define the linear relations
\begin{equation*}
\hat B_k :=  \setdef{ ([y],[x])\in \sM_{k}(A) \times \sM_{k-1}(A)}{
\exists\, (x',y')\in A \mbox{ with } [x']=[x]\mbox{ and } [y']=[y] }.
\end{equation*}
It is shown that $\hat B_k: \sM_{k}(A) \to \sM_{k-1}(A)$ are injective operators, i.e.,
$\dom \hat B_k=\sM_{k}(A)$ and $\ker \hat B_k=\mul\hat B_k=\{[0]\}$ for $1\le k\le m-1$.
Moreover, $\ran \hat B_1=\sM_0(A)$.
% It is clear from the definition that $\hat B_k$ is a linear relation.

To see that $\dom\hat  B_k=\sM_{k}(A)$ let $[y]\in\sM_{k}(A)$, then $y=y_1+y_2$
with $y_1\in\ran A^k$ and $y_2\in\sR_{0}(A)$,
and there exists $x\in\ran A^{k-1}$ ($x\in\dom A$ if $k=1$)
such that $(x,y_1)\in A$. Since $y-y_1=y_2\in\sR_{0}(A)$ one has $[y]=[y_1]$
and as further $[x]\in\sM_{k-1}(A)$ it follows $([y],[x])=([y_1],[x])\in\hat B_k$.

To see that $\mul\hat B_k=\{[0]\}$ let $([0],[x])\in \hat B_k$.
Then there exist $y'\in[0]$ and $x'\in[x]$
such that $(x',y')\in A$. Hence there exist $y_1\in\ran A^{k+1}$ and $y_2\in\sR_{0}(A)$ with
$y'=y_1+y_2$. Therefore,
$(x_1,y_1)\in A$ with some $x_1\in\ran A^k$. It follows that
$(x'-x_1,y_2)\in A$, and since $y_2\in\sR_{0}(A)$ one obtains $x'-x_1\in\sR_{0}(A)$.
Consequently, $[x]=[x_1]$ and
$x_1\in\ran A^k + \sR_{0}(A)$ which gives $[x]=[0]\in\sM_{k-1}(A)$.

To see that $\ker \hat B_k=\{[0]\}$ let $([y],[0])\in \hat B_k$.
Then there exists $x'\in [0]$ and $y'\in[y]$ such that $(x',y')\in A$.
Hence there exist $x_1\in\ran A^k$ and $x_2\in\sR_{0}(A)$ with $x'=x_1+x_2$.
Furthermore, there exists $y_2\in\sR_{0}(A)$ with
$(x_2,y_2)\in A$.
It follows that $(x_1, y'-y_2)\in A$, which together with $x_1\in\ran A^k$ implies that
$y'-y_2\in\ran A^{k+1}$.
Consequently, $[y] = [y_2]$ and $y_2\in \ran A^{k+1} + \sR_{0}(A)$ which gives $[y]=[0]\in\sM_k(A)$.

To see that $\ran \hat B_1=\sM_0(A)$ let $[x]\in\sM_{0}(A)$. Then $x=x_1+x_2$ with $x_1\in\dom A$ and
$x_2\in\ran A$, and there exists $y_1$ with $(x_1,y_1)\in A$ so that $([y_1],[x_1])\in \hat B_1$.
Since $x-x_1\in\ran A$ it follows that $[x]=[x_1]$ and the statement of Step 1 is shown.

The properties of $\hat B_k$ imply that
$$
C_0=C_1,\quad C_{k-1}\ge C_k,\quad 2\le k\le m.
$$

\medskip\noindent
\emph{Step 2}:
Let $\{[x^1_{m-1}],\ldots,[x^{C_{m-1}}_{m-1}]\}$ be
a basis of $\sM_{m-1}(A)$. Then, for $i=1,\ldots,C_{m-1}$, $x_{m-1}^i=x_0^i+\tilde x_{m-1}^i$
with $x_0^i\in\sR_0(A)$ and $\tilde x_{m-1}^i\in\ran A^{m-1}$.
Therefore, there are elements $x_{m-2}^i\in\ran A^{m-2}$ with $(x_{m-2}^i,\tilde x_{m-1}^i)\in A$,
and $[x_{m-1}^i]=[\tilde x_{m-1}^i]$  for $i=1,\ldots,C_{m-1}$, thus
it is shown that $\{[\tilde x^1_{m-1}],\ldots,[\tilde x^{C_{m-1}}_{m-1}]\}$ is a basis of $\sM_{m-1}(A)$ and $[x_{m-2}^i]=\hat B_{m-1}[\tilde x_{m-1}^i]$ for $i=1,\ldots,C_{m-1}$. Since $\hat B_{m-1}$ is injective by Step~1,
the elements $[x_{m-2}^1],\ldots,[x^{C_{m-1}}_{m-2}]\in \sM_{m-2}(A)$ are linearly independent. Now choose additional linearly independent elements $[x_{m-2}^i]\in\sM_{m-2}(A)$, $C_{m-1}+1\le i\le C_{m-2}$ (note that this range is empty if $C_{m-1} = C_{m-2}$), with  $x_{m-2}^i\in\ran A^{m-2}$ (this can be achieved by substracting appropriate elements from $\sR_0(A)$ without changing the equivalence class) such that $\{[x_{m-2}^1] ,\ldots,
[x^{C_{m-2}}_{m-2}]\}$ forms a basis of $\sM_{m-2}(A)$.

To continue in an inductive way, assume that, for some $2\le k\le m-2$,
$\{[x^1_{k}],\ldots,[x^{C_{k}}_{k}]\}$ is
a basis of $\sM_{k}(A)$ such that $x^1_{k},\ldots,x^{C_{k}}_{k}\in \ran A^k$. Then there exist $x_{k-1}^i\in\ran A^{k-1}$ such that $(x_{k-1}^i, x_{k}^i)\in A$ for $i=1,\ldots,C_{k}$. Therefore, $[x_{k-1}^i]=\hat B_{k}[\tilde x_{k}^i]$ for $i=1,\ldots,C_{m-1}$ and, since $\hat B_{k}$ is injective by Step~1,
the elements $[x_{k-1}^1],\ldots,[x^{C_{k}}_{k}]\in \sM_{k-1}(A)$ are linearly independent. Choose additional linearly independent elements $[\tilde x_{k-1}^i]\in \sM_{k-1}(A)$ for $C_{k}+1\le i\le C_{k-1}$ with $x^{i}_{k-1}\in\ran A^{k-1}$ such that $\{[\tilde x^1_{k-1}],\ldots,[\tilde x^{C_{k-1}}_{k-1}]\}$ is
a basis of $\sM_{k-1}(A)$.

This procedure continues until one arrives at a basis $\{[x^1_{1}],\ldots,[x^{C_{1}}_{1}]\}$ of $\sM_{1}(A)$ with $x^1_{1},\ldots,x^{C_{1}}_{1}\in \ran A$. Then there are elements $x_0^i$ with
$(x^i_{0},x^i_{1})\in A$ for $i=1,\ldots,C_{1} = C_0$. Since $[x_{0}^i]=\hat B_{1}[\tilde x_{1}^i]$ for $i=1,\ldots,C_{0}$ and $\hat B_{1} : \sM_1(A) \to \sM_0(A)$ is bijective by Step~1, $\{[x^1_{0}],\ldots,[x^{C_{0}}_{0}]\}$ is a basis of $\sM_0(A)$. In the end, the shift chains as in the statement of the theorem have been constructed.

\medskip\noindent
\emph{Step 3}:
It follows from the construction in Step 2 that
$\{[x_{k}^{1}],\ldots,[x_{k}^{C_{k}}]\}$ is a basis  of $\sM_{k}(A)$, $0\leq k\leq m-1$.
 To see that the elements
$\setdef{x_k^i}{1\le i\le C_k,\ 0\le k\leq m-1}$
are linearly independent in $\sH$, assume that
\begin{equation}\label{umum}
 \sum_{k=0}^{m-1} \sum_{i=1}^{C_k} c_k^i x_k^i=0.
\end{equation}
By \eqref{raran} $\sum_{k=1}^{m-1} \sum_{i=1}^{C_k} c_k^i x_k^i \in\ran A + \sR_0(A)$, so that
by taking equivalence classes in \eqref{umum} with respect to $\sM_0(A)$, one obtains
\[
\sum_{i=1}^{C_0}c_0^i [x_0^i]=0 \in \sM_0(A),
\]
which implies that $c_0^i=0$ for $1 \leq i \leq C_0$. Note that therefore the assumption \eqref{umum}
is reduced to
\begin{equation*}\label{umum1}
 \sum_{k=1}^{m-1} \sum_{i=1}^{C_k} c_k^i x_k^i=0.
\end{equation*}
Now form equivalence classes in $\sM_{1}(A)$ and
proceed in a similar way. Then ultimately it follows that
$c_k^i=0$ for all the coefficients, which proves the claim.

\medskip\noindent
\emph{Step 4:}
To show~\eqref{50cent}, first observe that it is clear that $\sR_r(A) + \sR_m(A) \subset \sH(A)$. To see that equality holds, it will be shown that
\begin{equation}\label{sMdef}
 \dim \frac{\dom A +\ran A}{\sR_r(A)}=\sum_{k=0}^{m-1} C_k.
\end{equation}
To this end, observe the following identity
 \[
\begin{split}
 \dim \frac{\dom A +\ran A}{\sR_r(A)}
&=\dim\frac{\dom A +\ran A}{\ran A+\sR_r(A)} \\
&\hspace{-0.5cm}+\sum_{k=1}^{m-1}\dim\frac{\ran A^k +\sR_r(A)}{\ran A^{k+1}+\sR_r(A)}
+\dim\frac{\ran A^{m}+\sR_r(A)}{\sR_r(A)},
\end{split}
\]
where for the last term one has by Lemma~\ref{Caramelo} that
\[
    \dim\frac{\ran A^{m}+\sR_r(A)}{\sR_r(A)} = \dim\frac{\sR_r(A)}{\sR_r(A)} = 0.
\]
This gives \eqref{sMdef}. To see that the sum~\eqref{50cent} is direct, let $x\in \sR_r(A) \cap \sR_m(A)$, then $x$ is of the form as the left hand side of equation~\eqref{umum}. Since $x\in\sR_r(A)$ one further has that $[x] = 0 \in \sM_k(A)$ for all $0\le k\le m-1$. Then, similar to Step~3, it follows that $x=0$.

\medskip\noindent
\emph{Step 5:}
It will be shown that
\begin{equation}\label{sMdeffg}
\ran A \cap \sR_m(A)=\spn \setdef{ x_k^i}{1\le i\le C_k,\ 1\le k\le m-1}.
\end{equation}
It is clear by construction of $\sR_m(A)$ that the right-hand side is contained in the left hand side, so it suffices to show that
\[
    \dim \big( A \cap \sR_m(A)\big) = \sum_{k=1}^{m-1} C_k.
\]
By~\eqref{50cent} it follows that
\[
\dim \ran A = \dim \big(\ran A \cap \sR_r(A)\big) + \dim \big(\ran A \cap \sR_m(A)\big),
\]
so it remains to show that
\begin{equation}\label{rancomp}
 \dim \frac{\ran A}{\ran A \cap \sR_r(A)}= \sum_{k=1}^{m-1} C_k.
\end{equation}
To this end, observe that
\begin{equation}\label{sMdeff}
\dim \frac{\dom A +\ran A}{\sR_r(A)}=\dim \, \frac{\dom A +\ran A}{\ran A+\sR_r(A)}+ \dim \frac{\ran A +\sR_r (A)}{\sR_r(A)}.
\end{equation}
By~\eqref{sMdef} the left-hand side of the above equation equals $\sum_{k=0}^{m-1} C_k$ and the first term on the right-hand side is $C_0$. Hence,
\[
 \sum_{k=1}^{m-1} C_k = \dim \frac{\ran A +\sR_r (A)}{\sR_r(A)}=\dim \frac{\ran A}{\ran A \cap \sR_r(A)},
\]
where the last equality is due to~\cite[Lem~2.2]{Ka}. This proves \eqref{rancomp}.

\medskip\noindent
\emph{Step 6}:
For  \eqref{9Euro} it suffices to show that $A \subset A_R \hplus A_M$ and that the sum is direct.
Let $(x,y)\in A$, so that $y \in \ran A$. Then by~\eqref{50cent} one has
$y=y_r+y_m$ with $y_r\in \ran A \cap \sR_r(A)$ and $y_m\in \ran A \cap \sR_m(A)$.
Therefore, invoking~\eqref{sMdeffg}, there are $x_r \in\sR_r(A)$ and $x_m\in\sR_m(A)$ such that
$(x_r, y_r)\in A_R$ and $(x_m, y_m)\in A_M$. It follows that $(x-x_r-x_m, 0)\in A$, thus
$x-x_r-x_m\in\sR_0(A)$ and $(x-x_r-x_m, 0)\in A_R$. Therefore, one obtains that $(x-x_m,y-y_m)\in A_R$ and hence
\[
 (x,y)=(x-x_m, y-y_m) +(x_m,y_m) \in A_R \hplus A_M.
\]
That the sum \eqref{9Euro} is direct follows from \eqref{50cent}.

\medskip\noindent
\emph{Step 7}:
It will be shown that~\eqref{eq:ChainStrucAM} holds.
  It is clear that the right-hand side is contained in the left-hand side.
 For the converse inclusion,  let $(x,y)\in A \cap (\sR_m(A)\times \sR_m(A))$.
Since $y\in\ran A\cap \sR_m(A) $,  by \eqref{sMdeffg}
there is some $x_m\in\sR_m(A)$ with $(x_m, y)\in A_M$. It follows
that $x-x_m\in \sR_m(A)\cap\sR_0(A)=\{0\}$ by~\eqref{50cent}, thus $x=x_m$ and~\eqref{eq:ChainStrucAM} is shown.
It is a direct consequence of \eqref{eq:ChainStrucAM} that $A_M$ is an operator
(i.e., $\mul A_M=\{0\}$),
and that $\sigma_p(A_M) = \emptyset$, thus $A_M$ is a multishift.

\medskip\noindent
\emph{Step 8}: It remains to show~\eqref{eq:dim-AM}, which directly follows from~\eqref{Henk3}.
\end{proof}

The following is a consequence of Theorem \ref{Thm:DecompAR-AM}. It is implicitly contained in~\cite{SandDeSn05}.

 \begin{corollary}\label{multsh}
Let $A$ be a linear relation in a finite-dimensional linear space~$\sH$.
Then the following statements are equivalent:
\begin{enumerate}\def\labelenumi{\rm(\roman{enumi})}
\item $A$ is a multishift {\rm (}i.e., $\sigma_p(A) = \emptyset${\rm )};
\item there exists a basis for $\sH$ of the form \eqref{lilnasX} and $A$
is given by the right-hand side of \eqref {eq:ChainStrucAM}.
\end{enumerate}
\end{corollary}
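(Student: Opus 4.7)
The plan is to derive both implications directly from Theorem~\ref{Thm:DecompAR-AM}, with essentially all the work already packaged there.

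For the implication (i)~$\Rightarrow$~(ii), I would first translate $\sigma_p(A)=\emptyset$ into the two pointwise conditions $\ker(A-\lambda)=\{0\}$ for every $\lambda\in\dC$ and $\mul A=\{0\}$. Walking inductively along the chains~\eqref{jchain} and~\eqref{Jane}, this forces $\sR_\lambda(A)=\{0\}$ for every $\lambda\in\dC\cup\{\infty\}$, and hence $\sR_r(A)=\sR_c(A)=\{0\}$ by~\eqref{totalrootsp+} and~\eqref{Alejandro}. In particular the root part $A_R$ defined in~\eqref{eq:AR} is the trivial relation $\{(0,0)\}$. Excluding the vacuous case $A=\{(0,0)\}$, the strict inclusion $\sR_r(A)=\{0\}\subsetneq\dom A+\ran A$ required by Theorem~\ref{Thm:DecompAR-AM} is then automatic. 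That theorem yields from~\eqref{50cent} that $\sH(A)=\sR_m(A)$, from~\eqref{lilnasX} the desired basis of $\sR_m(A)$, and from~\eqref{9Euro} combined with $A_R=\{(0,0)\}$ the identity $A=A_M$ with the representation~\eqref{eq:ChainStrucAM}.

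For the converse (ii)~$\Rightarrow$~(i), I would verify directly that a relation $A$ of the form prescribed in~\eqref{eq:ChainStrucAM} has empty point spectrum. The equality $\mul A=\{0\}$ is immediate because the second components of all generators are distinct basis vectors, so a combination whose first component vanishes must itself be trivial; hence $\infty\notin\sigma_p(A)$. To rule out $\lambda\in\dC$, I would expand a hypothetical eigenvector as $x=\sum c_{k,i}\,x_{k-1}^i$ and compare basis coefficients on the two sides of the required identity $\lambda\sum c_{k,i}\,x_{k-1}^i=\sum c_{k,i}\,x_k^i$. Within each chain~$i$ this produces a triangular recursion $\lambda c_{k+1,i}=c_{k,i}$, whose topmost instance reads $0=c_{k_i,i}$ because the terminal basis vector $x_{k_i}^i$ of the $i$th chain is not the first component of any generator. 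Unwinding then gives $c_{k,i}=0$ for all $k,i$, so $\ker(A-\lambda)=\{0\}$.

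The step I expect to require the most care is the coefficient bookkeeping in the converse: one must check that different chains do not interact when imposing $(x,\lambda x)\in A$. This is handled by the observation that every generator in~\eqref{eq:ChainStrucAM} involves basis elements from a single chain only, so the linear system decouples chain by chain and reduces to the triangular recursion just described. The forward direction is essentially a direct appeal to Theorem~\ref{Thm:DecompAR-AM}.
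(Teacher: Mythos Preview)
Your proposal is correct and follows exactly the approach the paper intends: the corollary is stated without proof, merely as ``a consequence of Theorem~\ref{Thm:DecompAR-AM}'', and you supply precisely the details that make this attribution work. One small economy: for (ii)~$\Rightarrow$~(i) you need not redo the coefficient bookkeeping, since the last paragraph of Step~7 in the proof of Theorem~\ref{Thm:DecompAR-AM} already establishes that any relation of the form~\eqref{eq:ChainStrucAM} is a multishift; your direct verification is nonetheless correct and self-contained.
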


%\begin{proof}
%(i) $\Rightarrow$ (ii) By assumption one has $\sR_r(A)=\{0\}$ and, hence,
%the result follows from Theorem \ref{splitit}.
%
%(ii) $\Rightarrow$ (i) This is clear.
%\end{proof}

%{\CR
%The following statement is a variation on the theme of the main theorem.\marginpar{TB: I would remove the red text.}
%
%\begin{corollary}
%Let $A$ be a linear relation in a finite-dimensional linear space $\sH$.
%Assume that $A$ has a reducing sum decomposition with respect to $\sH=\sR_r(A)\oplus \sR$
%of the form
%\begin{equation}\label{alter}
% A=A_R \oplus H,
%\end{equation}
%where $H$ is a linear relation in the subspace $\sR \subset \sH$. Then $H$ is a multishift.
% \end{corollary}
%
%\begin{proof}
%Let \eqref{alter} be a reducing sum decomposition with respect to $\sR_r(A)\oplus \sR$.
%Assume that $(x,\la x)\in H$, then $H \subset  A$ implies  that also $(x,\lambda x) \in A_R$.
%Since $\sH=\sR_r(A)\oplus \sR$ is a direct sum decomposition, it follows that $x=0$.
%Likewise, if $(0,y) \in H$, then $(0,y) \in A_R$ gives $y=0$. Therefore the relation $H$ in $\sR$
%has no eigenvalues in $\dC \cup \{\infty\}$,  i.e., $H$ is a multishift.
%\end{proof}
%}

\section{Main result: Jordan-like decomposition}\label{Sec:Decomp}

In this section the main result of this note will be stated.
Any linear relation in a finite-dimensional space admits a reducing sum decomposition
into a completely singular relation, a Jordan relation, and a multishift.
This is obtained by a combination of Theorems~\ref{Blaubeere},
\ref{Thm:DecompAS-AJ}, and \ref{Thm:DecompAR-AM}.
Furthermore, it will be shown that the chain structure of singular chains, Jordan chains,
and shift chains of any such decomposition is uniquely determined by~$A$ and given by its Weyr characteristics. Moreover, it turns out that the resulting decomposition of $A$ is a unique representative of the equivalence class of with respect to the notion of strict equivalence (see Definition~\ref{Def:strict-equi}).

\begin{theorem}\label{splitit}
 Let $A$ be a linear relation in a finite-dimensional space $\sH$.
Then there exist linear relations $A_S, J_{\la_1}(A), \ldots, J_{\la_l}(A), J_\infty(A),  A_M$, all contained in $A$,
where $\{\la_1, \ldots, \la_l\}=\si_\pi(A)\cap\dC$, such that
\begin{equation}\label{holythreefold}
A = A_S \oplus J_{\la_1}(A)\oplus \cdots\oplus J_{\la_l}(A)\oplus
J_\infty(A) \oplus A_M,
\end{equation}
is a reducing sum decomposition of $A$ with respect to
\begin{equation}\label{eq:holythreefold-spaces}
\sH(A)=\sR_c(A) \oplus \sX_{\la_1}(A) \oplus \cdots \oplus \sX_{\la_l}(A)\oplus \sX_\infty(A)\oplus\sR_m(A)
\end{equation}
with the spaces defined in Theorems~{\rm \ref{Blaubeere}},
{\rm \ref{Thm:DecompAS-AJ}}, and {\rm \ref{Thm:DecompAR-AM}}.
Furthermore,
%\begin{enumerate}[(i)]
\begin{enumerate}\def\labelenumi{\rm(\alph{enumi})}
  \item $A_S$ is completely singular in $\sR_c(A)$;
  \item $J_{\la_i}(A)$ is a Jordan operator in $\sX_{\la_i}(A)$ corresponding to $\la_i$ for $1\le i\le l$;
   \item $J_\infty(A)$ is a Jordan relation in $\sX_\infty(A)$; %and
  \item $A_M$ is a multishift in $\sR_m(A)$.
\end{enumerate}
Any of the linear relations in~\eqref{holythreefold} may be absent, if the corresponding space in~\eqref{eq:holythreefold-spaces} is trivial.
\end{theorem}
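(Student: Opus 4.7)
The plan is to assemble the result from the three structural decompositions established in the previous sections, since all the hard work of constructing the subspaces and the chain bases, and of proving injectivity of the operators between successive quotient spaces, has already been done. Concretely, I would apply Theorem~\ref{Thm:DecompAR-AM} first to peel off the multishift part on the outside, and then apply Theorem~\ref{Thm:DecompAS-AJ} to decompose the remaining root part.

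First I would deal with the outer layer. If $\dom A+\ran A=\sR_r(A)$, then by Corollary~\ref{greijp} the space $\sR_m(A)$ is trivial and one sets $A_M=\{(0,0)\}$, so $\sH(A)=\sR_r(A)$ and $A=A_R$; otherwise the inclusion $\sR_r(A)\subset\dom A+\ran A$ is strict and Theorem~\ref{Thm:DecompAR-AM} supplies the direct sum decomposition $\sH(A)=\sR_r(A)\oplus\sR_m(A)$ together with the reducing sum decomposition $A=A_R\oplus A_M$, where $A_M$ is a multishift in $\sR_m(A)$. This establishes the outermost summand in \eqref{holythreefold} and \eqref{eq:holythreefold-spaces}, and takes care of property~(d).

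Next I would decompose $A_R$ by invoking Theorem~\ref{Thm:DecompAS-AJ} with $\sigma_\pi(A)\setminus\{\infty\}=\{\lambda_1,\ldots,\lambda_l\}$. That theorem yields the direct sum decomposition of $\sR_r(A)$ displayed in \eqref{direkt} and the reducing sum decomposition $A_R=A_S\oplus J_{\lambda_1}(A)\oplus\cdots\oplus J_{\lambda_l}(A)\oplus J_\infty(A)$. Combining this with the previous step and using associativity of the direct sum produces the asserted identities \eqref{holythreefold} and \eqref{eq:holythreefold-spaces}. Properties (a)--(c) then follow immediately: (a) from the definition \eqref{srel} of the completely singular part together with Theorem~\ref{Blaubeere}, (b) from the statement of Theorem~\ref{Thm:existenceAla} that $J_{\lambda_i}(A)$ is a Jordan operator in $\sX_{\lambda_i}(A)$ corresponding to $\lambda_i$, and (c) from the statement of Theorem~\ref{Thm:existenceAlaInfty} that $J_\infty(A)$ is a Jordan relation in $\sX_\infty(A)$.

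There is essentially no obstacle in this argument; the only mildly delicate point is the bookkeeping of absent components. Specifically, one has to observe that $A_S$ and the space $\sR_c(A)$ drop out when $\sR_c(A)=\{0\}$ (equivalently, when $\sigma_p(A)\neq\dC\cup\{\infty\}$), that the finite Jordan blocks $J_{\lambda_i}(A)$ are absent when $\sigma_\pi(A)\cap\dC=\emptyset$, that $J_\infty(A)$ and $\sX_\infty(A)$ are absent when $\infty\notin\sigma_\pi(A)$, and that $A_M$ and $\sR_m(A)$ are absent precisely when $\dom A+\ran A=\sR_r(A)$; in each of these cases the corresponding summand is the trivial relation $\{(0,0)\}$ in the trivial subspace $\{0\}$, and the decomposition formulas \eqref{holythreefold} and \eqref{eq:holythreefold-spaces} remain valid after dropping the trivial terms.
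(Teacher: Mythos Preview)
Your proposal is correct and matches the paper's approach exactly: the paper does not give a separate proof of Theorem~\ref{splitit} but simply states that it is obtained by combining Theorems~\ref{Blaubeere}, \ref{Thm:DecompAS-AJ}, and~\ref{Thm:DecompAR-AM}, which is precisely what you do. Your additional care in handling the trivial cases and in noting that the graph restrictions $A_S$, $J_{\lambda_i}(A)$, $J_\infty(A)$ are already restrictions of $A$ (not merely of $A_R$), so that the nested reducing sum decompositions combine into one, is appropriate and fills in the only point the paper leaves implicit.
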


 %\begin{proof}
%The identity \eqref{50cent2} can be found in Theorem \ref{Thm:DecompAR-AM}.
%%For  \eqref{9Euro} it suffices to show that $A \subset A_R \hplus A_M$ and that the sum is direct.
%%Let $(x,y)\in A$, so that $y \in \ran A$. Then
%%\[
%%\dim \frac{\ran A}{\ran A \cap \sR_r(A)}= \sum_{k=1}^{m-1} C_k=\dim \ran (A \cap \sR_m(A))
%%\]
%%(cf. Step 5 in the proof of Theorem \ref{Thm:DecompAR-AM}) implies that
%%$y=y_r+y_m$ with $y_r\in\ran A_R$ and $y_m\in\ran A_M$.
%%Therefore there are $x_r \in\sR_r(A)$ and $x_m\in\sR_m(A)$ such that
%%$(x_r, y_r)\in A_R$ and $(x_m, y_m)\in A_M$. It follows that $(x-x_r-x_m, 0)\in A$ and
%%$x-x_r-x_m\in\sR_0(A)$, hence $(x-x_r-x_m, 0)\in A_R$. Therefore, one obtains that $(x-x_m,y-y_m)\in A_R$.
%%Thus
%%\[
%% (x,y)=(x-x_m, y-y_m) +(x_m,y_m) \in A_R \hplus A_M.
%%\]
%%To see that the sum $A_R \hplus A_M$ is direct, apply $\sR_r(A)\cap\sR_m(A)=\{0\}$
%%from \eqref{50cent}.
%%Hence the reducing sum decomposition $A=A_R \oplus A_M$ has been shown.
%\end{proof}

%For a linear relation $A$ in a finite-dimensional space $\sH$ r
%Recall the \emph{Weyr characteristic corresponding to} $\sR_c(A)$
%by \eqref{tunesandI+};
%the individual
%

\begin{remark}
The following special cases may serve to illustrate Theorem~\ref{splitit}.
\begin{enumerate}[(a)]
  \item Consider the case of a trivial singular chain subspace $\sR_{c}(A)=\{0\}$.
Then the completely singular part is absent and the treatment in Section~\ref{sec3} becomes
simpler.  In this case the quotient spaces $\sZ_k(A,\la)$  in \eqref{vv0}
are given by
\begin{equation}\label{jaca1}
\ker (A-\lambda),\,  \frac{\ker  (A-\lambda)^2}{\ker (A-\lambda)},
\, \frac{\ker  (A-\lambda)^3}{\ker (A-\lambda)^2}, \,\,\cdots ,
\end{equation}
whereas the quotient spaces $\sW_k(A)$  in \eqref{grijp0} are given by
\begin{equation}\label{jaca2}
\mul A,\,  \frac{\mul A^{2}}{\mul A},
\, \frac{\mul A^3}{\mul A^2}, \,\,\cdots .
\end{equation}
Recall that $\sR_{c}(A)=\{0\}$ implies that
$\sigma_{\pi}(A)=\sigma_{p}(A)$. Hence, if $\lambda \not\in \sigma_{p}(A)$, then
the Weyr characteristic corresponding to \eqref{jaca1}  is the null sequence and,
similarly, if $\infty \not\in \sigma_{p}(A)$, then
the Weyr characteristic corresponding to \eqref{jaca2}  is the null sequence.

\item Consider the case of a trivial multivalued part $\mul A=\{0\}$. Then certainly $\sR_{c}(A)=\{0\}$ (and the comments of~(a) apply), but additionally the quotient spaces in \eqref{jaca2} are trivial. The Weyr characteristic for \eqref{jaca1} then essentially coincides with that considered in~\cite{S15} for linear operators.

\item Consider the case of $\dom A = \sH$. Then $A_M$ equals the zero space. Assume that $A_M\neq \{(0,0)\}$. As \eqref{holythreefold}
is a reducing sum decomposition, one can assume, for simplicity,
$A=A_M$. As $A=A_M$ is a multishift, it has no eigenvalues and for every for every pair $(x,y) \in A$ the entries $x,y$
are linearly independent. Let $(x_1,x_2) \in A$. As $\dom A=\sH$,
it follows that $x_2\in \dom A$, hence there exists $x_3$
with $(x_2,x_3)\in A$. Now, $\{x_1, x_2\}$ is linearly independent but
$\{x_1, x_2, x_3\}$ might be linearly
independent or not. If it is linearly independent, then there exists
$x_4$ with $(x_3,x_4)\in A$. Again, $\{x_1, x_2, x_3, x_4\}$ is linearly
independent or not. If it is linearly independent, then  there exists
$x_5$ with $(x_4,x_5)\in A$. This can be continued. Finally, as
$\dom A = \sH$ is finite dimensional, this procedure shows
that there is a smallest natural number $m$, $2\leq m\leq \dim \sH$,
with the properties
$$
\begin{array}{ll}
\{x_1, \ldots, x_m\} & \mbox{ are linearly independent,}\\
\{x_1, \ldots, x_{m+1}\} & \mbox{ are not linearly independent,}\\
(x_i,x_{i+1} ) \in A & \mbox{ for } i=1,\ldots , m.
\end{array}
$$
Therefore, one has $x_{m+1}=\sum_{i=1}^m\alpha_ix_i$ for some
$\alpha_i\in \mathbb C$, $i=1,\ldots,m$. Set $M:=\spn\{x_1,\ldots,x_m\}$ and define the matrix
$$
T := 
\begin{bmatrix}
0 & \cdots &0& \alpha_1\\
1 & && \vdots\\
& \ddots && \vdots\\
&& 1 & \alpha_m
\end{bmatrix}.
$$
Let $\lambda\in\dC$ be an eigenvalue of  $T$ with eigenvector $\beta = (\beta_1,\ldots,\beta_m)^\top\in\dC^m\setminus\{0\}$, i.e., $T\beta = \lambda \beta$. Then $x:= \sum_{i=1}^m \beta_i x_i \in M$ is nontrivial and since $(x_i, x_{i+1}) \in A$ for $i=1,\ldots,m$ one finds that for
\[
    z:= \beta_1 x_2 + \ldots + \beta_{m-1} x_m + \beta_m x_{m+1}
\]
one has $(x, z) \in A$. Observe that by $x_{m+1}=\sum_{i=1}^m\alpha_ix_i$ it follows
\[
    z = \alpha_1 \beta_m x_1 + (\alpha_2 \beta_m + \beta_1) x_2 + \ldots + (\alpha_m \beta_m + \beta_{m-1}) x_m
\]
and since
\[
    \begin{pmatrix} \alpha_1 \beta_m \\ \alpha_2 \beta_m + \beta_1\\ \vdots \\ \alpha_m \beta_m + \beta_{m-1} \end{pmatrix} = T \begin{pmatrix} \beta_1 \\ \vdots \\ \beta_{m}\end{pmatrix} = \lambda \begin{pmatrix} \beta_1 \\ \vdots \\ \beta_{m}\end{pmatrix},
\]
one has that
\[
    z = \lambda \beta_1 x_1 + \ldots + \lambda \beta_m x_m = \lambda x
\]
so that $(x, \lambda x) \in A$ and hence $A$ has an eigenvalue, a contradiction.
Therefore, the quotient spaces in~\eqref{eq:Dk} are trivial, and the corresponding Weyr characteristic given by~\eqref{AdanYEva} is the null sequence. That is, the multishift part is absent.

\item As a consequence of (a)--(c), the classical result of the Jordan canonical form for linear operators $A$ in a finite-dimensional space $\sH$ is covered by Theorem~\ref{splitit}. Since in particular $\dom A = \sH$ and $\mul A = \{0\}$, the Weyr characteristic of $A$ is that corresponding to the spaces~\eqref{jaca1} and $A$ has the reducing sum decomposition
    \[
        A = J_{\la_1}(A)\oplus \cdots\oplus J_{\la_l}(A),
    \]
    with Jordan operators $J_{\la_i}(A)$ whose structure coincides with that  of classical Jordan blocks according to the representation~\eqref{coro}.
\end{enumerate}
\end{remark}

In order to justify calling~\eqref{holythreefold} a Jordan-like decomposition for linear relations it needs to exhibit a certain uniqueness. Recall that for the fixed decomposition of $\sH(A)$ in~\eqref{eq:holythreefold-spaces}, any reducing sum decomposition is intrinsically unique, cf.\ Section~\ref{Sec:preli}. Moreover, the Jordan-like decomposition is uniquely determined by the Weyr characteristic of~$A$; in particular, if any two linear relations have the same Weyr characteristic, then they have the same Jordan-like decomposition. To see this, recall that for a linear relation $A$ in a finite-dimensional linear space $\sH$
the Weyr characteristic corresponding to the sequence of quotient spaces
%are now defined.
%
%\begin{definition}\label{CosteraAzul}
%Let A be a linear relation in a finite-dimensional space $\sH$
%and let $\sigma_{\pi}(A) \setminus \{\infty\}= \{\lambda_1,\ldots,\lambda_l\}$.
%Then for each $k\geq 1$ one defines:
\begin{enumerate}\def\labelenumi{\rm(\alph{enumi})}
\item %the \emph{Weyr characteristic corresponding to} corresponding to
%$\sR_c(A)$
in \eqref{tunesandI} is given by the sequence $B:=(B_{k})_{k \geq 1}$ in \eqref{tunesandI+};
\item %the \emph{Weyr characteristic corresponding to
%$\sR_{\lambda_{i}}(A)$
in \eqref{vv0} is given by the sequece $W(\lambda):=(W_{k}(\lambda))_{k \geq 1}$ in
\eqref{herni3};  %$1 \leq i \leq l$;
\item %the \emph{Weyr characteristic corresponding to
%$\sR_\infty(A)$
in \eqref{grijp0} is given by the sequence $A=(A_{k})_{k \geq 1}$ in \eqref{grijp1};
\item %the \emph{Weyr characteristic corresponding to
%$\sR_m(A)$
in \eqref{eq:Dk} is given by the sequence  $C:=(C_{k})_{k \geq 0}$
in \eqref{AdanYEva}.
\end{enumerate}   %\end{definition}
Note that each of these Weyr characteristics is a finitely supported nonincreasing sequence,
which may be the null sequence.
%\begin{equation}\label{Wss}
%W(\lambda_j):= (W_k(\lambda_j))_{k\geq 1}, \quad A:=(A_k)_{k\geq 1},
%\quad B:=(B_k)_{k\geq 1},
% \quad C:=(C_k)_{k\geq 0}.
%\end{equation}
The Weyr characteristics corresponding to all different proper complex eigenvalues $\{\la_1, \ldots, \la_l\}=\si_\pi(A)\cap\dC$
will be collected in a single sequence:
\begin{equation}\label{Ws}
W:=(W(\lambda_1), W(\lambda_2), \ldots, W(\lambda_{l})).
\end{equation}

\begin{definition}\label{wweyr}
Let $A$ be a linear relation in a finite-dimensional linear space $\sH$.
The collection of the sequences
\begin{equation}\label{Anitta}
(B, W,A,C),
\end{equation}
given by \eqref{tunesandI+},  \eqref{Ws}, \eqref{grijp1}, and \eqref{AdanYEva},
%in \eqref{Wss} and \eqref{Ws},
is called the \emph{Weyr characteristic} of the linear relation $A$.
\end{definition}

The Jordan-like decomposition~\eqref{holythreefold}  of a linear relation $A$ in Theorem~\ref{splitit} is completely determined by the Weyr characteristic of $A$, which follows from the construction in Theorems~\ref{Blaubeere}, \ref{Thm:DecompAS-AJ}, and \ref{Thm:DecompAR-AM}. Moreover, one has the following result.

\begin{proposition}\label{Prop:Jordan-Weyr}
 Any two linear relations in a finite-dimensional space $\sH$ with the same Weyr characteristic have the same reducing sum decomposition~\eqref{holythreefold} with respect to the same subspace decomposition~\eqref{eq:holythreefold-spaces}.
\end{proposition}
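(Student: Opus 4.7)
The plan is to exploit the fact that every component in the Jordan-like decomposition~\eqref{holythreefold} was built constructively from chain bases whose cardinalities and internal chain structure are read off directly from the Weyr characteristic. I would therefore proceed by first translating the Weyr sequences into combinatorial block data, and then transporting one decomposition to the other via an explicit linear isomorphism.

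First I would inspect each of the four constructive theorems and record how the Weyr sequences determine the chain structure of the corresponding component. From~\eqref{Henk1} one reads that the completely singular part $A_S$ consists of exactly $B_k - B_{k+1}$ singular chains of length $k$ (with the convention $B_{d+1}=0$); from~\eqref{coro}, $J_{\la_i}(A)$ consists of $W_k(\la_i) - W_{k+1}(\la_i)$ Jordan chains of length $k$ at $\la_i$; \eqref{Bailandoo} describes $J_\infty(A)$ analogously in terms of the sequence $(A_k)_{k \geq 1}$; and from~\eqref{eq:ChainStrucAM}, $A_M$ consists of $C_k - C_{k+1}$ shift chains of length $k+1$. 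In particular, by~\eqref{eq:dim-AS}, \eqref{eq:dim-Jlambda}, \eqref{eq:dim-Jinfty}, and~\eqref{eq:dim-AM}, the dimensions of all five components, and thus of $\sH(A)$, depend only on the Weyr characteristic~\eqref{Anitta}.

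Next, let $A$ and $A'$ be two linear relations on $\sH$ with identical Weyr characteristic. Applying Theorem~\ref{splitit} to each produces chain bases for $\sR_c, \sX_{\la_i}, \sX_\infty, \sR_m$ of both relations, and since the Weyr sequences coincide these bases have matching cardinalities chain by chain. I would then define a linear isomorphism $T \colon \sH(A) \to \sH(A')$ that sends each chain basis element of $A$ to its positional counterpart in $A'$ (singular entries to singular entries of the same length and position, and similarly for each Jordan chain and each shift chain). Because the chain equations in~\eqref{Henk1}, \eqref{coro}, \eqref{Bailandoo}, and~\eqref{eq:ChainStrucAM} refer only to the symbolic position within a chain and to the corresponding eigenvalue, the map $T \times T \colon \sH(A) \times \sH(A) \to \sH(A') \times \sH(A')$ sends every spanning pair of $A$ listed in those identities to the analogous spanning pair of $A'$. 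Consequently, $T$ carries the subspace decomposition~\eqref{eq:holythreefold-spaces} of~$\sH(A)$ onto that of $\sH(A')$ and identifies the reducing sum decomposition of $A$ with that of $A'$ component by component.

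The main obstacle I expect is the verification that this identification is carried out faithfully: one has to know that the concatenation of all chain basis elements over the four types (singular, finite-eigenvalue Jordan, infinite-eigenvalue Jordan, and shift) is actually a basis of $\sH(A)$, and that $T$ simultaneously respects all four families of chain equations. The first point is precisely the content of the direct sum~\eqref{eq:holythreefold-spaces} combined with the linear independence asserted in Theorems~\ref{Blaubeere}, \ref{Thm:existenceAla}, \ref{Thm:existenceAlaInfty}, and \ref{Thm:DecompAR-AM}; the second follows at once because each chain equation is intrinsic to its chain and does not refer to the ambient relation. With this in place, $A$ and $A'$ carry, after the canonical identification $T$, literally the same reducing sum decomposition~\eqref{holythreefold} with respect to the same subspace decomposition~\eqref{eq:holythreefold-spaces}, which is the content of the proposition.
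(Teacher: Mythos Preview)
Your proposal is correct and matches the paper's reasoning: the paper does not give an explicit proof of this proposition but simply states, in the sentence preceding it, that the claim ``follows from the construction in Theorems~\ref{Blaubeere}, \ref{Thm:DecompAS-AJ}, and \ref{Thm:DecompAR-AM}.'' Your argument is precisely the natural way to unpack that remark---reading off the chain counts from the Weyr sequences and building the obvious chain-to-chain bijection $T$---and the two verification points you flag (that the concatenated chain bases span $\sH(A)$ directly, and that the chain equations are intrinsic) are exactly the facts supplied by those theorems.

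One small observation: your isomorphism $T$ in fact establishes the implication ``same Weyr characteristic $\Rightarrow$ strictly equivalent'', which is one half of Theorem~\ref{notsonew} (quoted from \cite{GMPT}). The paper separates these two statements, treating Proposition~\ref{Prop:Jordan-Weyr} as the purely structural assertion that the \emph{form} of~\eqref{holythreefold} is dictated by the Weyr data, and then importing Theorem~\ref{notsonew} to conclude uniqueness up to strict equivalence. Your approach merges the two steps, which is perfectly fine and arguably more self-contained.
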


In the remainder of this section consider finitely supported nonincreasing sequences
\begin{equation}\label{eq:TheWire}
    \big( (B_k)_{k\ge 1}, (W^1_k)_{k\ge 1},\ldots, (W^l_k)_{k\ge 1}, (A_k)_{k\ge 1}, (C_k)_{k\ge 0}\big),
\end{equation}
where any of the sequences may be a null sequence. Then, by the above results, it is possible to construct a linear relation (given by the Jordan-like decomposition~\eqref{holythreefold}) which has~\eqref{eq:TheWire} as Weyr characteristic. But to which extent is this relation unique?  To answer this question one introduces the following notion of strict equivalence.

\begin{definition}\label{Def:strict-equi}
The linear relations $S_1$ and $S_2$
in a finite-dimensional space $\sH$  are said to be \emph{strictly equivalent}
if there exists an invertible matrix $T$ such that
\begin{align}\label{DuaLipa1}
  (x,y) \in S_2 \quad \iff \quad (T^{-1}x,T^{-1}y) \in S_1
\end{align}
or, what is the same,
\begin{align}\label{DuaLipa}
S_2=TS_1T^{-1}.
\end{align}
\end{definition}

Note that~\eqref{DuaLipa} is understood in the sense of multiplication of linear relations.
The following theorem is taken from \cite{GMPT}.

\begin{theorem}\label{notsonew}
Two linear relations in a  finite-dimensional space $\sH$
are strictly equivalent if and only if their Weyr
characteristics coincide.
\end{theorem}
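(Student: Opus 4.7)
The plan is to prove the two implications separately, with the Jordan-like decomposition of Theorem~\ref{splitit} as the central tool.

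For the forward implication, fix an invertible $T$ on $\sH$ with $S_2=TS_1T^{-1}$ and observe that for every $k\ge 1$
\[
T(\ker S_1^k)=\ker S_2^k,\quad T(\mul S_1^k)=\mul S_2^k,\quad T(\ran S_1^k)=\ran S_2^k,\quad T(\dom S_1)=\dom S_2.
\]
Combined with \eqref{rootsp}, \eqref{Alejandro}, and \eqref{totalrootsp}, this gives $T(\sR_\la(S_1))=\sR_\la(S_2)$ for every $\la\in\dC\cup\{\infty\}$, together with $T(\sR_c(S_1))=\sR_c(S_2)$ and $T(\sR_r(S_1))=\sR_r(S_2)$. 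An invertible linear map that carries the numerator and denominator of each quotient space associated with $S_1$ onto those associated with $S_2$ induces an isomorphism of the quotients, so the sequences \eqref{tunesandI+}, \eqref{herni3}, \eqref{grijp1}, and \eqref{AdanYEva} agree for $S_1$ and $S_2$. In particular $\si_\pi(S_1)=\si_\pi(S_2)$, and therefore the labeled tuple $W$ in \eqref{Ws} is matched index by index, so that the full Weyr characteristics coincide.

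For the converse, assume that $S_1$ and $S_2$ share the Weyr characteristic $(B,W,A,C)$. By Theorems~\ref{Blaubeere}, \ref{Thm:existenceAla}, \ref{Thm:existenceAlaInfty}, and \ref{Thm:DecompAR-AM}, each $\sH(S_j)$ carries a basis $\cB_j$ of chain entries distributed among the completely singular, proper Jordan, infinite Jordan, and multishift blocks, and the number of chain entries at every index $(k,i)$ in every block is dictated entirely by the Weyr characteristic; thus $\cB_1$ and $\cB_2$ are in natural bijection. I would define $T_0\colon\sH(S_1)\to\sH(S_2)$ by linear extension of this bijection and extend to an invertible $T\colon\sH\to\sH$ by fixing any isomorphism between arbitrary complements of $\sH(S_j)$ in $\sH$, which have equal dimension because so do $\sH(S_1)$ and $\sH(S_2)$. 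Applying $(T,T)$ componentwise to the spanning chain pairs of $S_1$ given in \eqref{TrapicheOakCasked}, \eqref{coro}, \eqref{Bailandoo}, and \eqref{eq:ChainStrucAM} then yields precisely the spanning chain pairs of $S_2$, so $TS_1T^{-1}\supset S_2$; equality follows from $\dim S_1=\dim S_2$, which is also read off the Weyr characteristic via \eqref{eq:dim-AS}, \eqref{eq:dim-Jlambda}, \eqref{eq:dim-Jinfty}, and \eqref{eq:dim-AM}.

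The main obstacle is purely combinatorial bookkeeping inside each block: one needs to verify that the elementary step $x_k^i\mapsto x_{k-1}^i+\la x_k^i$ and its analogues in the singular, infinite, and multishift blocks commute with the chosen bijection $\cB_1\leftrightarrow\cB_2$, so that chain pairs really map to chain pairs under $(T,T)$. This reduces, inside each block and at each chain length, to matching index sets of the same cardinality, which is possible precisely because both decompositions exhibit the same chain pattern.
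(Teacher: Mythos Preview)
The paper does not prove Theorem~\ref{notsonew}; it is quoted from the reference \cite{GMPT}. Your proposal therefore supplies what the paper itself omits, and it does so by exploiting the paper's own Jordan-like decomposition, which is the natural route.

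Your argument is essentially correct. For the forward direction the key identities $T(\ker(S_1-\la)^k)=\ker(S_2-\la)^k$, $T(\mul S_1^k)=\mul S_2^k$, $T(\ran S_1^k)=\ran S_2^k$, and $T(\dom S_1)=\dom S_2$ all follow from $(S_2-\la)^k=T(S_1-\la)^kT^{-1}$ and $S_2^k=TS_1^kT^{-1}$; then each numerator and denominator in \eqref{tunesandI}, \eqref{vv0}, \eqref{grijp0}, \eqref{eq:Dk} is mapped isomorphically, and the dimensions match. For the converse, the bijection $\cB_1\leftrightarrow\cB_2$ is well posed precisely because the Weyr characteristic fixes the number and length of each type of chain, and linearity of $T$ guarantees that, for example, $(x_k^i,x_{k-1}^i+\la x_k^i)$ is carried to $(Tx_k^i,Tx_{k-1}^i+\la Tx_k^i)$, so the ``bookkeeping obstacle'' you flag is in fact automatic. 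One minor point: since the chain pairs of $S_1$ span $S_1$ and their images under $(T,T)$ are exactly the chain pairs spanning $S_2$, you obtain $TS_1T^{-1}=S_2$ directly, so the inclusion-plus-dimension step is correct but unnecessary.
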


As a direct consequence of Proposition~\ref{Prop:Jordan-Weyr} and Theorem~\ref{notsonew}, it follows that the Jordan-like decomposition~\eqref{holythreefold} is a unique representative of the equivalence classes with respect to strict equivalence. Furthermore, one has the following result.

\begin{theorem}\label{newnew}
For any given finitely supported nonincreasing sequences~\eqref{eq:TheWire} {\rm (}and a finite-dimensional space $\sH$ with sufficiently large dimension{\rm )} there exists, up to strict equivalence, exactly one linear relation $A$ in $\sH$ with Weyr characteristic~\eqref{eq:TheWire}.
\end{theorem}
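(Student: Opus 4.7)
My plan is to reduce Theorem~\ref{newnew} to an existence claim, since uniqueness up to strict equivalence is immediate from Theorem~\ref{notsonew}: any two relations in $\sH$ sharing the Weyr characteristic~\eqref{eq:TheWire} are strictly equivalent, so at most one equivalence class can realize it.

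For existence, I would construct $A$ as an explicit direct sum assembled in reverse from the chain data. First, choose $l$ distinct points $\lambda_1,\ldots,\lambda_l\in\dC$ (one for each sequence $W^i$ present in the data). Next, in pairwise disjoint coordinate blocks of $\sH$, I would lay down basis vectors and define relations block by block using the chain patterns already established: a singular block $(\sH_S,A_S)$ modelled on~\eqref{Henk1} and~\eqref{TrapicheOakCasked}, with $B_k$ basis vectors at level $k$; Jordan-operator blocks $(\sH_{\lambda_i},J_{\lambda_i})$ modelled on~\eqref{coro}, with $W^i_k$ basis vectors at level $k$; a Jordan-relation block $(\sH_\infty,J_\infty)$ modelled on~\eqref{Bailandoo} with $A_k$ basis vectors at level $k$; and a multishift block $(\sH_M,A_M)$ modelled on~\eqref{eq:ChainStrucAM} with $C_k$ basis vectors at level $k$. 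Finally, set $A := A_S\oplus J_{\lambda_1}\oplus\cdots\oplus J_{\lambda_l}\oplus J_\infty\oplus A_M$. The ``sufficiently large dimension'' assumption amounts precisely to $\dim\sH \ge \sum_k B_k + \sum_i\sum_k W^i_k + \sum_k A_k + \sum_{k\ge 0} C_k$; excess dimensions of $\sH$ simply sit outside $\sH(A)$ and do not affect the construction. By construction this $A$ already admits the reducing sum decomposition~\eqref{holythreefold} with blocks of the prescribed chain structure.

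The remaining task is to check that the Weyr characteristic of this $A$ equals the input~\eqref{eq:TheWire}. The key point is that each of the four kinds of quotient spaces defining the Weyr characteristic —  \eqref{tunesandI}, \eqref{vv0}, \eqref{grijp0}, and \eqref{eq:Dk} — respects the block decomposition~\eqref{eq:holythreefold-spaces}: only one block contributes nontrivially to each, and on that block the chain structure was designed so that its level-$k$ quotient has precisely the prescribed dimension. Concretely, $\sR_c(A)$ is the $A_S$-block; $\ker(A-\lambda_i)^k + \sR_c(A)$ grows only through the $\lambda_i$-block; $\mul A^k + \sR_c(A)$ grows only through the $J_\infty$-block; and $\ran A^k + \sR_r(A)$ grows only through the multishift block.

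The main obstacle, though essentially a bookkeeping exercise given the machinery of Sections~\ref{Sec:SingJordanPart}--\ref{Sec:Multishift}, is ruling out cross-contamination between blocks. One must verify, for example, that vectors from the $J_{\lambda_j}$-block cannot lie in $\ker(A-\lambda_i)^k + \sR_c(A)$ for $j\ne i$, that $\mul A^k$ modulo $\sR_c(A)$ sees none of the finite-spectrum Jordan blocks (using $\mul J_{\lambda_i}=\{0\}$), and that $\sR_r(A)$ absorbs everything except the multishift block. Each of these reduces to the sharp spectrum of the individual block combined with the linear independence of entries of chains attached to distinct proper eigenvalues, as in the proof of Theorem~\ref{Thm:DecompAS-AJ} and ultimately~\cite[Cor.~4.5]{BSTW}. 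Once the four counts are read off, they return the sequences $B$, $W^i$, $A$, $C$, completing existence.
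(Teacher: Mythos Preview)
Your proposal is correct and follows the same route the paper takes. In fact, the paper gives no explicit proof of Theorem~\ref{newnew}: it is stated immediately after the sentence ``by the above results, it is possible to construct a linear relation (given by the Jordan-like decomposition~\eqref{holythreefold}) which has~\eqref{eq:TheWire} as Weyr characteristic,'' and uniqueness is deferred to Theorem~\ref{notsonew}. Your outline---existence via the explicit chain models of~\eqref{TrapicheOakCasked}, \eqref{coro}, \eqref{Bailandoo}, \eqref{eq:ChainStrucAM}, and uniqueness via Theorem~\ref{notsonew}---is exactly this, only spelled out in more detail than the paper bothers to provide.
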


\begin{remark}
The Jordan-like decomposition of linear relations derived in Theorem~\ref{splitit} resolves a ``non-uniqueness issue'' of the decomposition from~\cite{SandDeSn05}. A componentwise
direct sum decomposition of a linear relation $A$ into a completely singular relation,
a Jordan part and a multishift was derived in \cite{SandDeSn05}. However, this decomposition
does not exhibit uniqueness as the following example shows: For linearly independent elements $x_1, x_2, x_3$ in a finite-dimensional linear space $\sH$ define
\[
    A_1 := \spn \{ (0,x_1), (x_1,x_2), (x_2,0)\},\quad A_2 := \{(x_1,x_3)\}
\]
and $A:= A_1 \oplus A_2$.
Obviously, $\sR_c(A) = \spn\{ x_1, x_2\}$ and $\sR_r(A) = \spn\{ x_1, x_2, x_3\}$. Clearly, $A_1$ is completely singular and $A_2$ is a multishift, but $A_1 \oplus A_2$ is not a reducing sum decomposition. Furthermore, there is an alternative decomposition of $A$ into
\[
    A = A_1 \oplus A_3,\quad A_3 = \spn\{ (0,x_3 - x_2)\},
\]
where $A_3$ consists of a Jordan chain at $\infty$. Both decompositions are possible in the framework of \cite{SandDeSn05}. On the other hand, the Jordan-like decomposition~\eqref{holythreefold} in Theorem~\ref{splitit} is a reducing sum decomposition, and hence unique for the fixed decomposition of $\sH(A)$ in~\eqref{eq:holythreefold-spaces}.

It should also be stressed that in the present paper
the reducing sum decompositions are derived in the setting of linear spaces;
no further structure (such as an inner product) is required.
\end{remark}

\begin{remark}
The presentation of some of the material in \cite{SandDeSn05} was inspired
by the results in \cite{KW1, KW2}.
In the setting of (what is now called) almost Pontryagin spaces,
Kaltenb\"ack and Woracek  considered selfadjoint extensions of
symmetric relations with defect numbers $(1,1)$; one of the
requirements was the existence of a shift chain relative to
the isotropic part of the almost Pontryagin space.
The multishifts in \cite{SandDeSn05} were introduced
with the work of  Kaltenb\"ack and Woracek in mind.
Shifts have also been considered in the context of Pontryagin spaces;
see for instance \cite{CD20}, where
references to further work can be found.
\end{remark}

\begin{remark}
 Let $E,F\in\mathbb C^{n\times m}$ be matrices and let $s E-F$
be the corresponding matrix pencil.
Associated with $E$ and $F$  are two linear relations
\begin{equation}\label{inverses2}
E^{-1} F = \setdef{(x,y)\in \dC^m\times\dC^m }{ Fx=Ey}
\quad \mbox{and} \quad
FE^{-1}= \setdef{ (Ex,Fx) }{ x\in \mathbb C^m},
\end{equation}
which were already studied in~\cite{BennByer01,BB06}.
Usually, $E^{-1} F$ is called the {\em kernel} representation and $FE^{-1}$ the {\em range} representation
(see also \cite{BTW16}). Matrix pencils have
a canonical form, the so-called Kronecker canonical form~\cite{BergTren12,G59,K90}.
{ There is a deep connection between the range and the kernel representation and the corresponding matrix pencil. This was already
utilized in \cite{BGTWW,GMPST,GT17,GT,LMPWT}.}
A complete set of invariants for
the Kronecker canonical form are four multi-indices: the finite and
 infinite elementary divisors, the column and the row minimal indices.
These quantities measure the sizes of the different blocks in the
Kronecker canonical form.
Moreover, two matrix pencils are strictly equivalent if and only if all the four indices coincide \cite{G59}. They are completely determined by the so-called Wong sequences~\cite{BergTren13}, which are certain sequences of subspaces; the geometric approach in~\cite{BergTren13} is, in its spirit, close to the approach in the present paper (although not quotient spaces have been used). The relationship between linear relations and matrix pencils is investigated in \cite{BTW16,GMPT} and it
will be continued
in upcoming papers.
 \end{remark}
}

\end{document}